\newcommand{\be}{\begin{equation}}
\newcommand{\ee}{\end{equation}}
\newcommand{\bea}{\begin{eqnarray}}
\newcommand{\eea}{\end{eqnarray}}
\newcommand{\beas}{\begin{eqnarray*}}
\newcommand{\eeas}{\end{eqnarray*}}
\newcommand{\cU}{\mathcal{U}}
\DeclareMathOperator*{\argmin}{arg\,min}
\def\cE{{\cal E}}
\def\cJ{{\cal J}}
\def\cB{{\cal B}}
\def\cL{{\cal L}}
\def\cS{{\cal S}}
\def\E{{\mathbb E}}
\def\P{{\mathbb P}}
\def\R{{\mathbb R}}
\def\Z{{\mathbb Z}}
\def\bc{{\bm x}}
\def\bc{{\bm c}}
\def\bg{{\bm g}}
\def\bz{{\bm z}}
\def\by{{\bm y}}
\def\bA{{\bm A}}
\def\({\Bigl (}
\def\){\Bigr )}
\def\[{\Bigl [}
\def\]{\Bigr ]}
\def \<{\langle}
\def \>{\rangle}
\title{Analysis of sparse recovery for Legendre expansions using envelope bound}
\author{
Hoang~Tran\thanks{Department of Computational and Applied Mathematics, Oak Ridge National Laboratory, 1 Bethel Valley Road, P.O. Box 2008, Oak Ridge TN 37831-6164. email: \texttt{tranha@ornl.gov}. }
\and Clayton~Webster\thanks{Department of Mathematics, University of Tennessee, 1403 Circle Drive, Knoxville, TN 37916 and Department of Computational and Applied Mathematics, Oak Ridge National Laboratory, 1 Bethel Valley Road, P.O. Box 2008, Oak Ridge TN 37831-6164. email: \texttt{webstercg@ornl.gov}.}\
}
\begin{document}
\maketitle

\begin{abstract} 
We provide novel sufficient conditions for the uniform recovery of sparse Legendre expansions using $\ell_1$ minimization, where the sampling points are drawn according to orthogonalization (uniform) measure. So far, conditions of the form $m \gtrsim \Theta^2 s \times \textit{log factors}$ have been relied on to determine the minimum number of samples $m$ that guarantees successful reconstruction of $s$-sparse vectors when the measurement matrix is associated to an orthonormal system. However, in case of sparse Legendre expansions, the uniform bound $\Theta$ of Legendre systems is so high that these conditions are unable to provide meaningful guarantees. In this paper, we 
present an analysis which employs the envelop bound of all Legendre polynomials instead, and prove a new recovery guarantee for $s$-sparse Legendre expansions,  
$$
m \gtrsim {s^2} \times \textit{log factors}, 
$$
which is independent of $\Theta$. Arguably, this is the first recovery condition established for orthonormal systems without assuming the uniform boundedness of the sampling matrix. The key ingredient of our analysis is an extension of chaining arguments, recently developed in \cite{Bou14,ChkifaDexterTranWebster15}, to handle the envelope bound. Furthermore, our recovery condition is proved via restricted eigenvalue property, a less demanding replacement of restricted isometry property which is perfectly suited to the considered scenario. Along the way, we derive simple criteria to detect good sample sets. Our numerical tests show that sets of uniformly sampled points that meet these criteria will perform better recovery on average. 
\end{abstract}

\section{Introduction}
\label{sec:intro}
Compressed sensing (CS), originating as a signal processing technique \cite{CRT06,Donoho06}, has become an appealing approach for function approximation. Theory of CS indicates that if a function possesses a sparse representation in a known basis, it can be reconstructed from a limited number of suitable sampling points using nonlinear techniques such as convex optimization or greedy algorithms. In general, this number scales linearly with the sparsity level and only logarithmically with the size of representation, being much smaller than those required by traditional methods such as projection and interpolation. In recent years, there have been many works in the literature developing and analyzing CS approaches to the 
approximation of high-dimensional functions and parameterized PDE systems; we refer to 
\cite{MG12, YGX12, RS14, 
PHD14, HD15,JakemanNarayanZhou16,Brugiapaglia_MathComp18,Adcock15b,ChkifaDexterTranWebster15,DexterTranWebster17} and the reference therein. 

Let us consider a function $g: \cU\to  \mathbb{R}$ defined on the domain $\cU\subset \mathbb{R}^d$, endowed with a probability measure $\varrho$. We assume $g$ is well represented by the finite expansion
\begin{align}
\label{leg_exp}
g(\by) \simeq \sum_{{j} \in \cJ} c_{j} \Psi_{j}(\by),\ \ \by\in \cU,
\end{align}
where $\{\Psi_j\}_{j\in \cJ}$ is a pre-determined orthonormal system associated with $\varrho$ and indexed by a finite set $\cJ$. We define the corresponding polynomial subspace $\mathbb{P}_{\cJ} := \text{span}\{\Psi_j(\by): j\in \cJ\}$. In CS approach, one aims to recover $g$ by reconstructing unknowns $\{c_{j}\}_{{j}\in \cJ}$ from $m$ samples $g(\by_1),$ $\ldots, g(\by_m)$, which are drawn independently from the orthogonalization measure $\varrho$. Denote by $N$ the cardinality of $\cJ$, i.e., $N=\#(\cJ)$, $\bA$ the normalized sampling matrix and $\bg $ the normalized observation of the target function, i.e.,
\be
\label{defA}
\bm{A} := \(\frac{\Psi_j({\bm y}_i)}{\sqrt{m}}\)_{\substack{1\leq i\leq m \\ j\in \cJ}}  \in \R^{m\times N} ,\quad \text{and}\quad
\bg := \left(\frac{g({\bm y}_i)}{\sqrt{m}}\right)_{1\le i \le m} \in \mathbb{R}^m, 
\ee
this task amounts to solving for the coefficient vector $\bc = (c_{j})_{j\in \cJ}$ that satisfies $\bA  \bc = \bg$ (or, to be exact, $\|\bA  \bc - \bg\| \le \eta$ where $\eta$ is the truncation error accounting for the representation of $g$ by a finite series). Since the samples $g(\by_i)$ are costly to acquired in many applications, it is desirable to reconstruct $\bc$ with as small number of samples as possible, in particular, smaller than $N$. Although the system $\bA \bz = \bg$ becomes underdetermined, this is possible assuming that the expansion \eqref{leg_exp} is sparse, in which case $\bc$ can be reconstructed by several efficient algorithms, for instance, solving the convex optimization problem promoting sparsity
\begin{align}
\label{intro:l1}
 \argmin_{\bz\in \mathbb{R}^N} \|\bz\|_1\ \text{subject to}\ \bA \bz = \bg. 
\end{align}

The sparse recovery for the orthonormal expansions via $\ell_1$ minimization has shown to be very promising. It is supported by well-known theoretical estimates \cite{RudelsonVershynin08,FouRau13,Bou14,HR15,ChkifaDexterTranWebster15} that $s$-sparse expansions\footnote{expansions of the form \eqref{leg_exp} that have only $s$ nonzero coefficients} can be accurately recovered given the number of sampling points satisfying 
\begin{align}
\label{intro_cond}
m\gtrsim \Theta^2 s \times \text{log factors}, 
\end{align}
where $\Theta$ is the {uniform bound} of the underlying basis: 
$
\Theta = \sup_{{j}\in \cJ} \|{ \Psi}_{j}\|_{L^{\infty}({\mathcal U})}. 
$ 
For many orthonormal systems of interest such as trigonometric polynomials, Chebyshev polynomials, and Fourier transforms, $\Theta$ is a small constant, implying $m$ grows linearly with the sparsity level and at most logarithmically with the size of the expansion. These are indeed the favorable settings for sparse recovery. However, the case of Legendre expansions has proven to be problematic. There, the uniform bound $\Theta$ is large and $m$ determined as in \eqref{intro_cond} may be prohibitive. It is widely agreed that reconstructing directly from Legendre system and its orthogonalization (uniform) measure is not a good strategy; and several advanced techniques has been developed over the years to deal with this challenge, most notably, preconditioning approach and weighted $\ell_1$ minimization. The former method multiplies a suitable weight function to Legendre polynomials to form a preconditioned orthonormal system with significantly reduced $\Theta$, \cite{RauWard12,HD15,JakemanNarayanZhou16}, while the latter takes advantage of small uniform bounds of low-order Legendre polynomials to enhance the recovery of sparse vectors with some structures, \cite{RW15,Adcock15b,ChkifaDexterTranWebster15}.  

That being said, it can be argued that the condition \eqref{intro_cond} is too pessimistic for sparse Legendre expansions, and uniform bound should not be entirely relied on as the single tool to determine the number of samples. To illustrate this, let us conduct a simple test on the reconstruction of sparse Legendre polynomials in one-dimensional domain, $\cU = [-1,1]$, via $\ell_1$ minimization \eqref{intro:l1}. In this example, $\Psi_j \equiv L_j$ (the univariate Legendre polynomial of order $j$), $\bA$ is the sampling matrix associated with $L_j$ as in \eqref{defA}, where $\by_1,\ldots,\by_m$ are drawn according to uniform distribution in $\cU$. As $\|L_j\|_{L^{\infty}(\cU)} = \sqrt{2j+1}$ and $\Theta \ge \|L_{N-1}\|_{L^{\infty}(\cU)}$, condition \eqref{intro_cond} yields 
$$
m \gtrsim Ns \times \text{log factors},
$$ 
a trivial estimate since the required number of samples is now greater than $N$. In other words, there is no sound theory for the sparse recovery from underdetermined Legendre systems with uniform sampling points. Further, it is worth remarking that $m$ actually depends linearly on the maximal degree of the sparse expansions, which can only be greater than, if not equal to, $N$. To test the sharpness of condition \eqref{intro_cond}, we consider three different cases of $\cJ$: 
\begin{enumerate} [\ \ \ \ \  i.]
\item $\cJ = \{j\in \mathbb{N}: 1\le j \le 200\}$, 
\item $\cJ = \{j\in \mathbb{N}: 301\le j \le 500\}$,
\item $\cJ = \{j\in \mathbb{N}: 1801\le j \le 2000\}$.    
\end{enumerate}
In all these cases, the size of the finite Legendre expansions is fixed at 200; however, the maximal degree is varied from 200 (case i.), 500 (case ii.), to 2000 (case iii.). We aim to reconstruct sparse vectors $\bc$ of coefficients, with sparsity ranging from $5$ to $40$, given noiseless observations $\bg = \bA \bc$. For each sparsity level, we run $100$ trials, in each of which the support of $\bc$ is selected randomly and its coefficients are drawn from a Gaussian distribution. We fix the number of sample points $m = 100$ throughout the test, but generate different random set of samples $\{\by_i\}_{1\le i \le m}$ for each trial. The numerical results on averaged $\ell^2$ errors and successful recovery rates are presented in Figure \ref{fig:test1}. Two important observations can be made. First, sparse vectors with sparsity $15$ or less are accurately reconstructed, indicating that it is possible to recover sparse vectors directly from underdetermined Legendre systems. Second, the required number of samples should not depend on the maximal degree of Legendre expansions, evidenced by the fact that the errors and successful rates of sparse $\ell_1$ recovery are similar for three choices of $\cJ$ (even slightly worse for case i. where the maximum degree is smaller). This simple test suggests that there exists a gap between the performance of sparse Legendre recovery in practice and existing theory, in particular, condition \eqref{intro_cond}.     

\begin{center}
\begin{figure}[h]
\centering
\includegraphics[height=4.5cm]{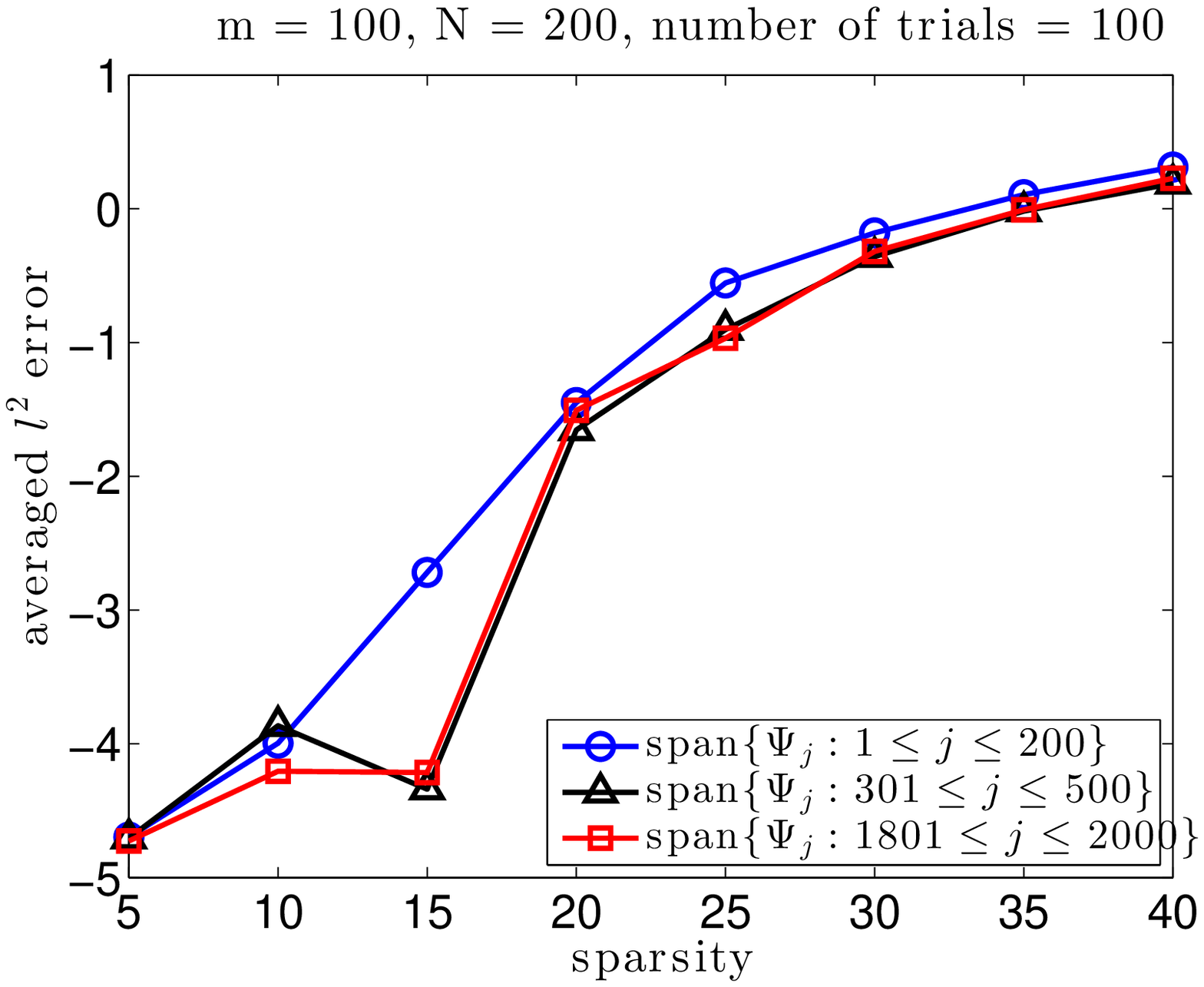} 
\includegraphics[height=4.5cm]{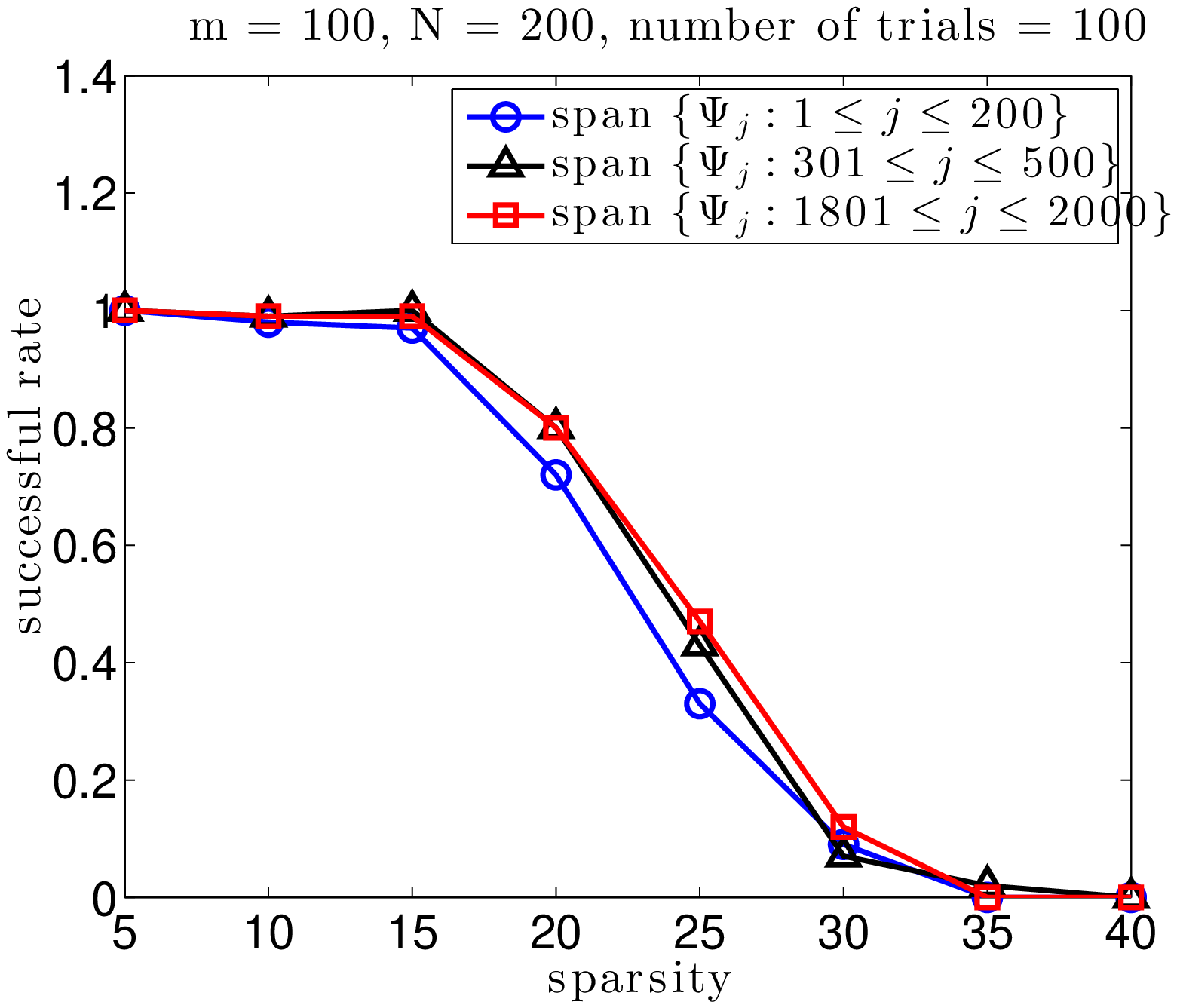} 
\caption{Sparse recovery for three Legendre expansions of the same size but different maximal degree.  }
\label{fig:test1}
\end{figure}
\end{center}

In this paper, we establish novel estimates on the sufficient number of samples for the sparse recovery of Legendre expansions using uniform sampling and $\ell_1$ minimization. In one-dimensional case, we prove that the uniform recovery of all $s$-sparse Legendre polynomials is guaranteed with high probability provided that 
\begin{align}
\label{cond:1d}
m\gtrsim s^2 \log^2(s) \log^2(N). 
\end{align}
We also prove the uniform recovery condition for $d$-dimensional case, i.e., $\cU = [-1,1]^d$, 
\begin{align}
\label{cond:multi-d}
\frac{m}{\log^{4d-4}(m) }\gtrsim C_d \, s^2 \log^2(s) \log^2(N), 
\end{align}
where $C_d$ is a universal constant depending only on the dimension. Note that here $N$ is the size of the polynomial expansion, which is not necessarily related to its maximal degree. The aforementioned estimates reveal that 
\begin{enumerate}
\item The number of samples grows quadratically in sparsity level, but only depends weakly on the size of expansions via log factors. Therefore, sparse recovery from {underdetermined} Legendre systems is provably possible, 
\item The number of samples is completely independent on the uniform bound $\Theta$ or maximal degree of Legendre expansions. 
\end{enumerate}
As such, they provide a mathematical justification for what we observed in the preliminary test. More importantly, these results bring about the first rigorous theory for $\ell_1$ recovery of Legendre approximations with uniform sampling. 

Our results are established via certain versions of null space property and incoherence condition of the sampling matrix. To guarantee successful reconstruction, it is sufficient for Legendre matrix to fulfill these conditions. Restricted isometry property (RIP) has typically been used to obtain the uniform recovery. Herein, however, we do not assume a fixed, known maximal degree for polynomial subspace $\mathbb{P}_{\cJ}$.  
Without this boundedness, we suspect that it is very challenging, if possible at all, to prove the eigenvalues of ``restricted'' matrices are bounded from above as required by RIP. To overcome this difficulty, we instead establish the uniform recovery via a variant of restricted eigenvalue property \cite{Bickel-Ritov-Tsybakov09,Koltchinskii2009a,Koltchinskii2009b}, which is a related condition but less demanding than RIP. In fact, restricted eigenvalue property only assumes some kind of positive definiteness of the sampling matrix, and many important matrices, such as correlated Gaussian and sub-Gaussian designs, have been shown to satisfy the restricted eigenvalue property while violate the RIP \cite{Buhlmann-deGeer09,Raskutti-Wainwright-Yu10}. As we shall see, null space property can be implied directly from restricted eigenvalue property.
A major part of our paper therefore is devoted to deriving precise bounds on the number of samples such that the restricted eigenvalue property is fulfilled.   

For this goal, our analysis employs \textit{chaining} techniques, which provide probabilistic bounds for stochastic processes via careful constructions of progressively finer nets upon which union bounds can be applied in an efficient manner. This approach has been used to establish the RIP estimates of several bounded orthonormal systems \cite{Bou14,HR15,ChkifaDexterTranWebster15}. However, unlike all previous works on the topic, we do not rely on the uniform bound, but rather apply and combine the envelope (i.e., pointwise) bound of Legendre systems to the chaining process. As illustrated in Figure \ref{fig:example2}, envelope bound is more efficient than uniform bound and invariant to the selection of polynomial subspaces. Thus, our new arguments using envelope bound lead to the improved estimates \eqref{cond:1d}-\eqref{cond:multi-d}, which do not depend on the polynomial subspace. Independently of the main results, we believe that this approach should be of interest on its own, and can be extended to other random matrices which can be bounded precisely pointwise, but whose uniform bound is bad. As a by-product, we derive some simple criteria to predict good (preferable) sample sets. We conduct numerical tests showing that sets of uniformly sampled points that meet these criteria will perform better recovery on average.

Our paper is organized as follows. In Section \ref{sec:WREP}, we review the restricted eigenvalue property and its relation to null space property and exact recovery guarantees. In Section \ref{sec:1d_anal}, we prove a new bound for restricted eigenvalue property of subsampled one-dimensional Legendre matrices. The restricted eigenvalue property estimate for multi-dimensional Legendre matrices will be discussed in Section \ref{sec:mult}. 
Section \ref{sec:num_examples} provides numerical illustrations on our criteria to select sample sets. Concluding remarks are given in Section \ref{sec:conclusion}. For the rest of this paper, $C$ represents a universal constant, whose value may change from place to place but is independent of any parameter.   

\begin{center}
\begin{figure}[h]
\centering
\includegraphics[height=4.5cm]{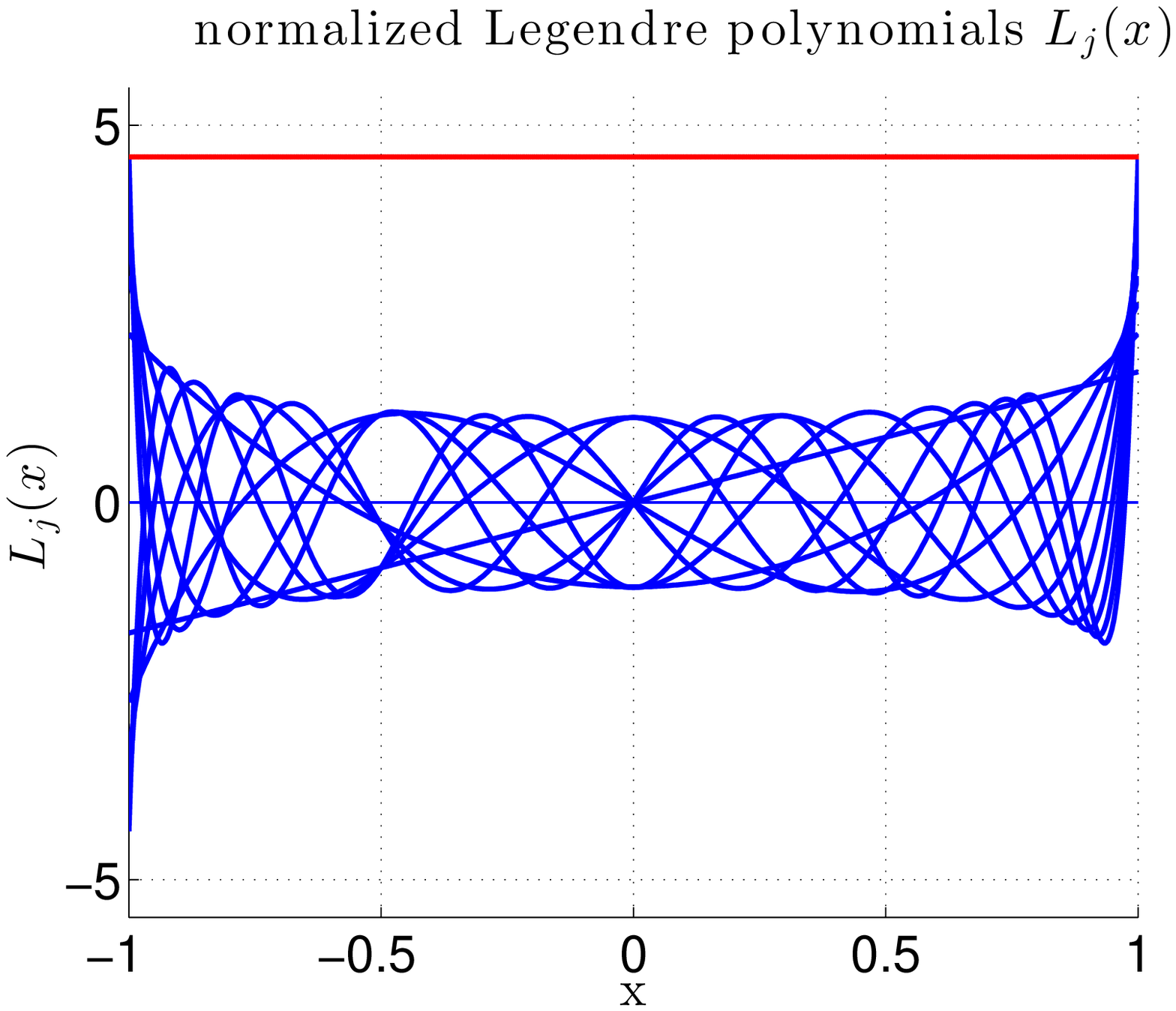} 
\includegraphics[height=4.5cm]{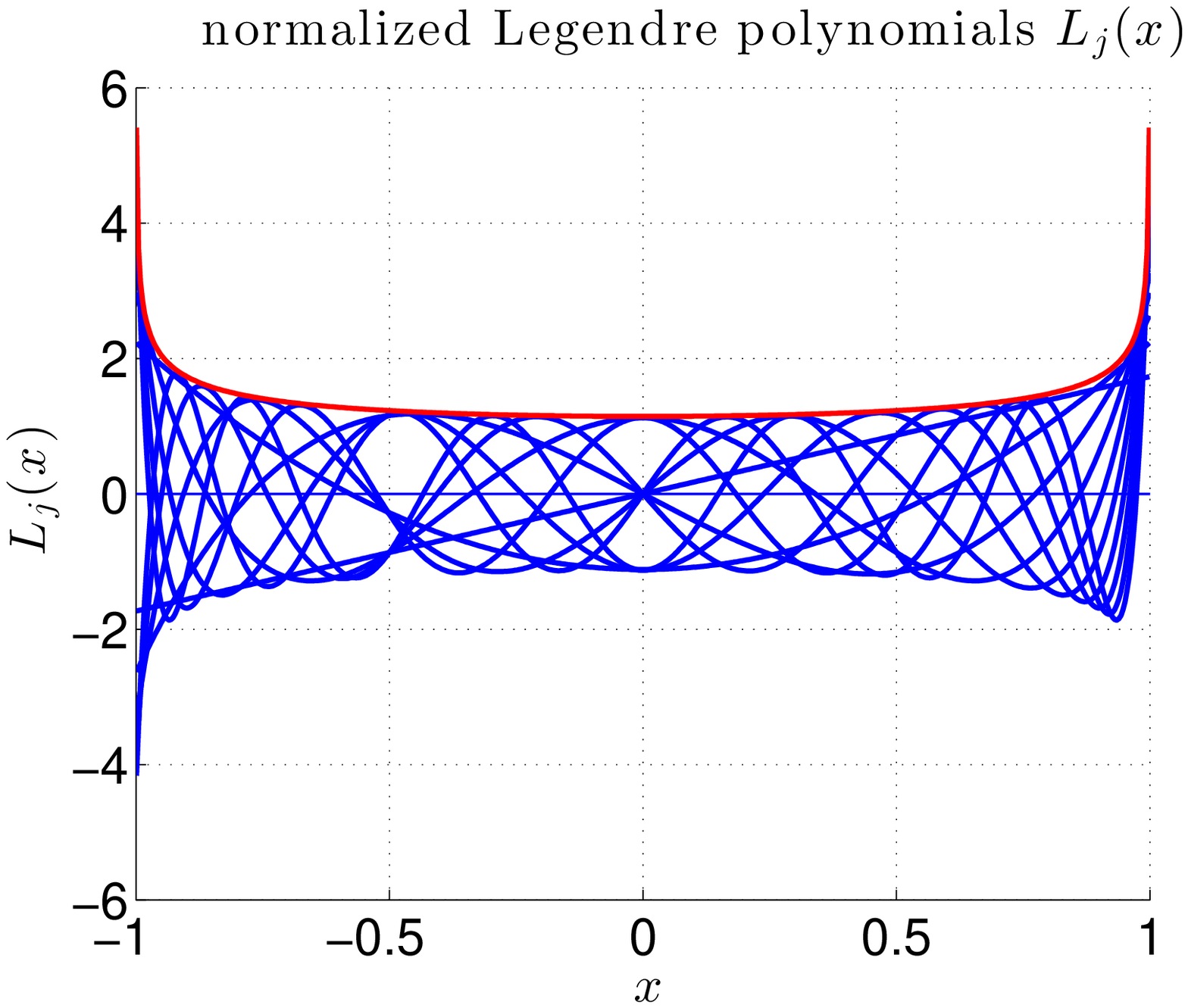} 
\caption{Uniform bound (left) and envelope bound (right) for one-dimensional Legendre system. }
\label{fig:example2}
\end{figure}
\end{center}

\section{Uniform recovery guarantee via restricted eigenvalue property} 
\label{sec:WREP}

In this section, we present an overview of restricted eigenvalue property and the implication of null space property and uniform recovery from restricted eigenvalue property. The discussion in this section does not assume $\bA$ being a subsampled Legendre system, but is also applicable for generic sampling matrix. First, the definition of restricted eigenvalue property is provided below. 

Let $\bz \in \mathbb{R}^N$ and be indexed in $\cJ$. For $S \subset \cJ$, we denote $S^c$ the complement of $S$ in $\cJ$ and $\bz_S$ the restriction of $\bz$ to $S$. For $\alpha>1$ and $0< s\le N$, define
\begin{gather}
\label{def:sparse_set}
\begin{aligned}
&{C}(S;\alpha) := \left\{\bz\in \mathbb{R}^N:  \|\bz_{S^c}\|_1 \le {\alpha}{\sqrt{s}} \|\bz_S\|_2\right\}, 
\\
 \text{ and } \ &{C}(s;\alpha) :=  \bigcup_{\substack {S\subset \cJ,\ \#(S) = s}} {C}(S;\alpha). 
\end{aligned}
\end{gather}
Let $B_1$ and $\partial B_1$ be the unit $\ell_1$-ball and sphere in $\mathbb{R}^N$ correspondingly
$$
B_1 = \{\bz \in \mathbb{R}^N: \|\bz\|_1 \le 1 \}, 
\text{\ \  and \ \ }
\partial B_1 = \{\bz \in \mathbb{R}^N: \|\bz\|_1 = 1 \}. 
$$ 
Similarly, denote by $B_2$ and $\partial B_2$ the unit $\ell_2$-ball and sphere in $\mathbb{R}^N$. 

\begin{definition}[Restricted eigenvalue property]
\label{def:WREP}
Given $\bA \in \mathbb{R}^{m\times N}$, $\alpha>1$ and $0< \delta < 1$, we say $\bA$ satisfies the restricted eigenvalue property of order $s$ with parameters $(\alpha,\delta)$ if 
\begin{align}
&  \|\bA\bz\|_2^2 \ge  (1 - \delta)\|\bz\|_2^2,  \ \ &\text{ for all }\ \bz \in {C}(s;\alpha),  \notag
\\
\text{ or equivalently, }& \|\bA\bz\|_2^2 \ge  1 - \delta,  \ \ &\text{ for all }\ \bz \in \partial B_2 \cap {C}(s;\alpha). 
\label{def:WREP2}
\end{align}
\end{definition}

We note that the restricted eigenvalue property stated above are slightly different from those often found in the literature, e.g., \cite{Bickel-Ritov-Tsybakov09,Buhlmann-deGeer09,Raskutti-Wainwright-Yu10}. In particular, the inequality $\|\bz_S\|_1 \le  {\sqrt{s}} \|\bz_S\|_2 $ yields that the set $C(s;\alpha)$ defined in \eqref{def:sparse_set} are slightly larger than 
$$
\bigcup_{\substack {S\subset \cJ,\ \#(S) = s}}  \left\{\bz\in \mathbb{R}^N:  \|\bz_{S^c}\|_1 \le {\alpha} \|\bz_S\|_1\right\}
$$
as considered in those works. 

The theory of restricted eigenvalue property, as a sufficient condition for the success of $\ell_1$ minimization methods, is well-developed in the fields of statistical inference and machine learning, where the RIP was proven to be severe for many random matrices of interest, for example, random designs with substantial correlation among rows/columns. In this paper, we develop recovery guarantees for a class of random matrices associated with Legendre orthonormal systems. As we do not assume the uniform boundedness, the fact that restricted eigenvalue property does not require an upper bound of $\|\bA \bz\|_2$ is also extremely useful in our analysis.  

Next, we will present the implication of null space property and uniform recovery from restricted eigenvalue property. We also include discussions on estimates for restricted eigenvalue property of orthonormal systems in case uniform bound is used. These derivations are quite simple and based on the corresponding developments for the RIP. However, as far as we know, they are not very well-spread in the compressed sensing literature.  

\begin{definition}[Restricted isometry property]
\label{def:RIP}
Given a matrix $\bA\in \mathbb{R}^{m\times N}$, we say $\bA$ satisfies the restricted isometry property of order $s$ if there exists $0 < \delta < 1$ such that 
\begin{align}
& (1 - \delta)\|\bz\|_2^2 \le \|\bA\bz\|_2^2 \le (1 + \delta)\|\bz\|_2^2,\   \forall \bz \in \mathbb{R}^N \text{ satisfying } \#(\supp(\bz))\le s,  \notag
\\
\text{ or} & \text{ equivalently, \ \ } 1 - \delta \le \|\bA\bz\|_2^2 \le 1 + \delta,\   \forall \bz \in \mathcal{E}_s, \label{def:RIP2}
\end{align}
where $\mathcal{E}_s := \{\bz\in \mathbb{R}^N: \#(\supp(\bz))\le s,\, \|\bz\|_2 = 1 \}$. 
\end{definition}

Compared to restricted eigenvalue property, RIP needs $\|\bA\bz\|_2$ to be bounded from both above and below, but only on the set of exactly sparse vectors in $\mathbb{R}^N$. In particular, it is easy to see that $\mathcal{E}_{2s}$ is a strict subset of $\partial B_2 \cap C(s;\alpha)$ required in \eqref{def:WREP2}. However, in available RIP estimates, \eqref{def:RIP2} is typically established for a larger set, that is, $\sqrt{s} B_1$, the $\ell_1$-ball that contains $\mathcal{E}_s$, \cite{Candes-Tao06,RudelsonVershynin08,Rauhut10,Bou14,ChkifaDexterTranWebster15}. 

Observe that $\partial B_2 \cap C(s;\alpha)$ is contained in the $\ell_1$-ball of radius $(1+\alpha)\sqrt{s}$. 
\begin{lemma}
\label{lemma:subset}
For $\alpha >1$, $0\le s\le N$, there follows
$ \partial B_2 \cap C(s;\alpha)  \subset (1 + \alpha) \sqrt{s} B_{1}. 
$
\end{lemma}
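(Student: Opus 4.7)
The plan is to start from a vector $\bz \in \partial B_2 \cap C(s;\alpha)$ and estimate $\|\bz\|_1$ by splitting it into the support-$S$ part and the off-support part, where $S$ is the index set of size $s$ furnished by membership in $C(s;\alpha)$. I would write $\|\bz\|_1 = \|\bz_S\|_1 + \|\bz_{S^c}\|_1$ and handle the two pieces separately.

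For the first piece, I would invoke Cauchy--Schwarz on the $s$-element support $S$ to obtain $\|\bz_S\|_1 \le \sqrt{s}\,\|\bz_S\|_2$. For the second piece, the defining inequality of $C(S;\alpha)$ directly gives $\|\bz_{S^c}\|_1 \le \alpha \sqrt{s}\,\|\bz_S\|_2$. Combining these two bounds yields
\begin{equation*}
\|\bz\|_1 \le (1+\alpha)\sqrt{s}\,\|\bz_S\|_2.
\end{equation*}
Finally, since $\|\bz_S\|_2 \le \|\bz\|_2 = 1$ (as $\bz \in \partial B_2$), one concludes $\|\bz\|_1 \le (1+\alpha)\sqrt{s}$, which is precisely the claim that $\bz \in (1+\alpha)\sqrt{s}\,B_1$.

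There is no genuine obstacle here; the lemma is essentially a one-line bookkeeping computation. The only thing to be careful about is making sure the choice of $S$ is allowed to depend on $\bz$ (which is built into the definition of $C(s;\alpha)$ as a union over $S$), so that the inequality $\|\bz_{S^c}\|_1 \le \alpha\sqrt{s}\|\bz_S\|_2$ is available for \emph{some} $s$-element set $S$, which is all we need.
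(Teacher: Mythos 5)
Your proof is correct and follows essentially the same route as the paper's: split $\|\bz\|_1$ into the $S$ and $S^c$ pieces, bound the first by Cauchy--Schwarz and the second by the defining cone inequality, then use $\|\bz_S\|_2 \le \|\bz\|_2 = 1$. The only difference is cosmetic: the paper compresses the final step by writing $(1+\alpha)\sqrt{s}\|\bz_S\|_2 = (1+\alpha)\sqrt{s}\|\bz\|_2$ (really an inequality), whereas you spell it out more carefully.
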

\begin{proof}
Let $\bz \in \partial B_2 \cap C(s;\alpha) $. Then, $ \|\bz_{S^c}\|_1 \le {\alpha}{\sqrt{s}} \|\bz_S\|_2$ for some $S\subset \cJ$, $\#(S) = s$. We have 
$
\|\bz\|_1 = \|\bz_S\|_1 + \|\bz_{S^c}\|_1 \le    {\sqrt{s}} \|\bz_S\|_2 + {\alpha}{\sqrt{s}} \|\bz_S\|_2 = (1 + \alpha) \sqrt{s} \|\bz\|_2,
$
therefore, $\bz \in  (1 + \alpha) \sqrt{s} B_{1}$. 
\end{proof}

One can easily derive from available RIP proofs that for orthonormal systems with uniform bound $\Theta$, RIP holds over the extended set $(1 + \alpha) \sqrt{s} B_{1}$ under a modified number of samples where the sparsity $s$ is replaced by $(1+\alpha)^2 s$. This property automatically implies the restricted eigenvalue property as a consequence of Lemma \ref{lemma:subset} and therefore, the restricted eigenvalue property holds under the same sample complexity. 

For a specific example, we can obtain the following result by slightly modifying \cite[Theorem 2.2]{ChkifaDexterTranWebster15}. 

\begin{proposition} [Restricted eigenvalue estimate for bounded orthonormal systems]
\label{note:RIP_theorem}
Let $\delta, \gamma$ be fixed parameters with 
$0<\delta < 1$, $0 < \gamma < 1$ and $\{\Psi_{j}\}_{{j}\in \cJ}$ be an orthonormal 
system of finite size $N = \#(\cJ)$ and $
\Theta = \sup_{{j}\in \cJ} \|{ \Psi}_{j}\|_{L^{\infty}}. 
$ Let $s_\alpha = (1+\alpha)^2 s$. Assume that 
\begin{align*}
m \ge C~\frac{\Theta^2 s_\alpha}{\delta^2} \log\( \frac{\Theta^2 {s_\alpha}}{\delta^2}\)
\max \biggl\{ \frac{2^5}{\delta^4}  \log\( 40 \frac{ \Theta^{2} s_\alpha}{\delta^2} 
\log\( \frac{\Theta^2 {s_\alpha}}{\delta^2}\) \) & \log(4N) , 
\\
\frac 1 \delta\log& \(\frac{1}{\gamma \delta }  
\log\( \frac{\Theta^2 {s_\alpha}}{\delta^2}\)  \) \biggl\} , \notag
\end{align*}
and $\by_1,\by_2,\ldots,\by_m$ are drawn independently from the orthogonalization 
measure $\varrho$ associated to $\{\Psi_{j}\}$. Then with probability exceeding $1-\gamma$, 
the normalized sampling matrix $\bA \in \mathbb{R}^{m\times {N}}$ (defined as in \eqref{defA}) satisfies
\begin{align*}
 \|\bm{Az}\|_2^2 > (1 - \delta) \|\bz\|^2_2 , \ \ \forall \bz \in {C}(s;\alpha).
\end{align*}
\end{proposition}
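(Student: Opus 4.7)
The plan is to reduce the proposition to an existing RIP-type estimate for bounded orthonormal systems on an enlarged $\ell_1$-ball, and then transfer that bound to the cone $C(s;\alpha)$ via Lemma~\ref{lemma:subset}. The conceptual engine is the observation that chaining-based RIP proofs, including the one in \cite[Theorem 2.2]{ChkifaDexterTranWebster15}, do not actually exploit that $\bz$ is exactly $s$-sparse; they control $\bigl|\|\bA\bz\|_2^2-\|\bz\|_2^2\bigr|$ uniformly over the larger convex set $\sqrt{s}\,B_1\cap B_2$, of which the $s$-sparse sphere $\mathcal{E}_s$ is merely a subset. Once this extended form of RIP is in hand, the restricted eigenvalue property follows from Lemma~\ref{lemma:subset} with no extra probabilistic work.

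First I would recall the chaining-based proof of RIP for bounded orthonormal systems in the precise form it actually delivers: under the sample complexity of \cite[Theorem 2.2]{ChkifaDexterTranWebster15} with sparsity parameter $s$, with probability at least $1-\gamma$ one has
\begin{equation*}
\bigl|\,\|\bA\bz\|_2^2-\|\bz\|_2^2\bigr|\le \delta \quad \text{for all } \bz\in \sqrt{s}\,B_1\cap B_2.
\end{equation*}
This is the statement that the symmetrization, Dudley integral, and Talagrand-type bounds produce in the proof; restricting afterwards to $\mathcal{E}_s\subset \sqrt{s}\,B_1\cap B_2$ is what yields the textbook RIP. I would simply re-read that argument to confirm that no step uses exact sparsity, only the $\ell_1$-radius, which is standard for chaining proofs.

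Next, I would substitute $s\mapsto s_\alpha=(1+\alpha)^2 s$ into the sample complexity and apply the same theorem: this gives, with probability $\ge 1-\gamma$, the one-sided bound $\|\bA\bz\|_2^2\ge (1-\delta)\|\bz\|_2^2$ for every $\bz\in \sqrt{s_\alpha}\,B_1\cap B_2 = (1+\alpha)\sqrt{s}\,B_1\cap B_2$. By Lemma~\ref{lemma:subset}, every $\bz\in \partial B_2\cap C(s;\alpha)$ lies in $(1+\alpha)\sqrt{s}\,B_1$, hence in $(1+\alpha)\sqrt{s}\,B_1\cap B_2$. Therefore the previous estimate applies, giving $\|\bA\bz\|_2^2\ge 1-\delta$ on $\partial B_2\cap C(s;\alpha)$, and homogenization (both $C(s;\alpha)$ and the inequality are positively homogeneous of degree zero and two respectively) promotes this to all of $C(s;\alpha)$. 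Finally I would match the bookkeeping: substituting $s_\alpha$ for $s$ in the sample complexity of \cite[Theorem 2.2]{ChkifaDexterTranWebster15} reproduces the displayed formula in the proposition verbatim, since every occurrence of $s$ inside logarithms becomes $s_\alpha$ and every leading factor $\Theta^2 s/\delta^2$ becomes $\Theta^2 s_\alpha/\delta^2$.

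The main obstacle is not probabilistic but expository: one must be careful to cite the version of the chaining RIP that is stated on $\sqrt{s}\,B_1$ rather than on $\mathcal{E}_s$. Since most RIP statements in the literature are phrased in the exactly-sparse form, the substitution $s\to s_\alpha$ appears on first glance to be illegal, because $\mathcal{E}_{s_\alpha}$ does not contain $\partial B_2\cap C(s;\alpha)$. Lemma~\ref{lemma:subset} sidesteps this by moving the geometric comparison to the $\ell_1$-ball, where chaining genuinely operates. Once this point is clarified, the proof amounts to a one-line invocation of the existing RIP proof plus Lemma~\ref{lemma:subset}, with no new concentration inequalities required.
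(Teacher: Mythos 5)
Your proposal is correct and mirrors the paper's own reasoning: the paper likewise observes (just before Lemma~\ref{lemma:subset}) that chaining proofs of RIP for bounded orthonormal systems actually control $\bigl|\|\bA\bz\|_2^2-\|\bz\|_2^2\bigr|$ over the $\ell_1$-ball $\sqrt{s}\,B_1$ rather than just $\mathcal{E}_s$, replaces $s$ by $s_\alpha=(1+\alpha)^2 s$ in \cite[Theorem~2.2]{ChkifaDexterTranWebster15}, and invokes Lemma~\ref{lemma:subset} to transfer the bound to $\partial B_2\cap C(s;\alpha)$. The homogeneity step and the bookkeeping are the same.
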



Interestingly, restricted eigenvalue property of order $s$ straightforwardly leads to null space property of order $s$.\footnote{Therefore, restricted eigenvalue property of order $s$ is comparable to RIP of order $2s$} 

\begin{proposition}
\label{lemma:NSP}
Assume that $\bA\in \mathbb{R}^{m\times N}$ satisfies the restricted eigenvalue property of order $s$ with parameters $(\alpha,\delta)$. Then, $\bA$ satisfies the null space property of order $s$, that is, for all $\bz \in \mathbb{R}^N$ and all $S\subset \cJ$ with $\#(S) = s$, 
\begin{align}
\label{NSP}
\|\bz_S\|_2 \le \frac{\rho}{\sqrt{s}}\|\bz_{S^c}\|_1 + \tau\|\bA\bz\|_2, \ \ \text{where}\ \rho = \dfrac{1}{\alpha}\text{ and }\tau = \dfrac{1}{ \sqrt{1 - \delta} }.
\end{align}
\end{proposition}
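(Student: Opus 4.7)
The plan is to prove the null space property by splitting into two complementary cases, depending on whether $\bz$ lies inside the cone $C(S;\alpha)$ or not, so that in each case one of the two terms on the right-hand side of \eqref{NSP} alone suffices to dominate $\|\bz_S\|_2$.

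Fix any $\bz \in \mathbb{R}^N$ and any $S \subset \cJ$ with $\#(S) = s$. First I would consider the case where $\bz \in C(S;\alpha)$, i.e. $\|\bz_{S^c}\|_1 \le \alpha\sqrt{s}\,\|\bz_S\|_2$. Then by definition $\bz \in C(s;\alpha)$, and since the restricted eigenvalue property is homogeneous of degree two (equivalently, applying \eqref{def:WREP2} to $\bz/\|\bz\|_2$), we get $\|\bA\bz\|_2^2 \ge (1-\delta)\|\bz\|_2^2$. Combined with $\|\bz_S\|_2 \le \|\bz\|_2$ this yields $\|\bz_S\|_2 \le \tau\|\bA\bz\|_2$, which already implies \eqref{NSP} since the first term is nonnegative.

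Next I would handle the complementary case $\bz \notin C(S;\alpha)$, i.e. $\|\bz_{S^c}\|_1 > \alpha\sqrt{s}\,\|\bz_S\|_2$. Here one simply rearranges to get
\[
\|\bz_S\|_2 < \frac{1}{\alpha\sqrt{s}}\,\|\bz_{S^c}\|_1 = \frac{\rho}{\sqrt{s}}\,\|\bz_{S^c}\|_1,
\]
which again implies \eqref{NSP} since the second term on the right is nonnegative.

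There is no real obstacle here: the proposition is essentially a bookkeeping lemma that exploits the definition of $C(S;\alpha)$ as a dichotomy. The only subtle point is to notice that the restricted eigenvalue property, as stated in \eqref{def:WREP2} only on $\partial B_2 \cap C(s;\alpha)$, upgrades by scaling invariance to the bound $\|\bA\bz\|_2^2 \ge (1-\delta)\|\bz\|_2^2$ for every $\bz \in C(s;\alpha)$, which is what makes Case 1 go through.
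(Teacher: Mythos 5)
Your proof is correct and follows essentially the same dichotomy as the paper's proof (split on whether $\bz \in C(S;\alpha)$, use the restricted eigenvalue bound in one case, rearrange the cone inequality in the other). The only cosmetic difference is that the paper first reduces to $S$ being the support of the $s$ largest entries of $\bz$ — a reduction that, as your argument makes clear, is unnecessary since both cases go through for arbitrary $S$.
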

\begin{proof}
For any $\bz\in \mathbb{R}^N$, we only need to prove \eqref{NSP} for $S$ chosen as the index set corresponding to $s$ largest components of $\bz$. Consider two cases:

If $\bz \in {C}(S;\alpha)$, by \eqref{def:WREP2}, we have 
$$
\|\bz_S\|_2 \le \|\bz\|_2 \le \frac{1}{ \sqrt{1 - \delta} }\|\bA\bz\|_2 \le  \frac{\rho}{\sqrt{s}}\|\bz_{S^c}\|_1 + \tau\|\bA\bz\|_2  . 
$$

Otherwise, $\bz \notin {C}(S;\alpha)$ yields 
$$
\|\bz_S\|_2 \le  \frac{1}{\alpha \sqrt{s}}\|\bz_{S^c}\|_1  \le   \frac{\rho}{\sqrt{s}}\|\bz_{S^c}\|_1 + \tau\|\bA\bz\|_2. 
$$
\end{proof}

Recall $\bc$ and $\bg$ are the vectors of exact coefficients and observations such that $\|\bA  \bc - \bg\|_2 \le \eta$. Let $\bc^\#$ be an approximation of $\bc$ determined by  
\begin{align}
\label{intro:l1_noisy}
\bc^\# =  \argmin_{\bz\in \mathbb{R}^N} \|\bz\|_1\ \text{subject to}\ \|\bA \bz - \bg\|_2 \le \eta. 
\end{align}
Denote by $\sigma_s(\bc)_1$ the error of best $s$-term approximation of $\bc$ in $\ell_1$ norm, i.e., 
$
\sigma_s(\bc)_1  = \inf\limits_{\Lambda \subset \cJ,\, \#(\Lambda) \le s} \|\bc -\bc_{\Lambda}\|_1. 
 $
 The reconstruction error estimates follow from Proposition \ref{lemma:NSP} using standard analysis, e.g., \cite[Theorems 4.19 and 4.22]{FouRau13}.
 \begin{proposition}
 \label{prop:error_est}
Let $\bA \in \mathbb{R}^{m\times N}$, $\alpha>1$, $0< \delta < 1$ and $\delta_0 = \dfrac{1}{1+\alpha} < \dfrac{1}{2}$. Assume that $\bA$ satisfy \eqref{NSP}  for all $\bz \in \mathbb{R}^N$ and all $S\subset \cJ$ with $\#(S) = s$. For $\bc \in \mathbb{R}^N$ and $\bg \in \mathbb{R}^m$ satisfying $\|\bA  \bc - \bg\|_2 \le \eta$, let $\bc^\#$ be the solution of \eqref{intro:l1_noisy}. Then
 \begin{align}
\label{error_const:WREP}
\|\bc - \bc^{\#}\|_1 \le \frac{2}{1-2\delta_0} \sigma_s(\bc)_1 + \frac{4}{\sqrt{1-\delta}} \cdot \frac{1-\delta_0}{1-2\delta_0}  \eta {\sqrt{s}}. 
\end{align} 
\end{proposition}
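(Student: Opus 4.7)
The plan is to adapt the standard null-space-property-implies-recovery argument, as in \cite[Theorems 4.19 and 4.22]{FouRau13}, using the specific form of \eqref{NSP} provided by Proposition \ref{lemma:NSP}. I would set $\bv := \bc^\# - \bc$ and begin by bounding $\|\bA \bv\|_2$: since both $\bc$ and $\bc^\#$ are feasible for \eqref{intro:l1_noisy}, the triangle inequality gives $\|\bA \bv\|_2 \le \|\bA \bc^\# - \bg\|_2 + \|\bA \bc - \bg\|_2 \le 2\eta$. Choose $S \subset \cJ$ with $\#(S) = s$ to be an index set of $s$ largest absolute entries of $\bc$, so that $\|\bc_{S^c}\|_1 = \sigma_s(\bc)_1$.

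Next, the $\ell_1$ minimality of $\bc^\#$ yields $\|\bc + \bv\|_1 \le \|\bc\|_1$. Splitting this inequality on $S$ and $S^c$ and applying the triangle and reverse triangle inequalities produces the standard cone estimate
\begin{equation*}
\|\bv_{S^c}\|_1 \le \|\bv_S\|_1 + 2\sigma_s(\bc)_1.
\end{equation*}
At this point I would invoke \eqref{NSP} for $\bv$ and the chosen $S$, combined with $\|\bv_S\|_1 \le \sqrt{s}\|\bv_S\|_2$, to obtain
\begin{equation*}
\|\bv_S\|_1 \le \rho \|\bv_{S^c}\|_1 + \tau \sqrt{s}\,\|\bA \bv\|_2,
\end{equation*}
with $\rho = 1/\alpha$ and $\tau = 1/\sqrt{1-\delta}$. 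Substituting the cone estimate into this bound and solving for $\|\bv_S\|_1$ gives
\begin{equation*}
(1-\rho)\|\bv_S\|_1 \le 2\rho\, \sigma_s(\bc)_1 + \tau \sqrt{s}\, \|\bA \bv\|_2.
\end{equation*}
Since $\delta_0 = 1/(1+\alpha)$ implies $\rho = \delta_0/(1-\delta_0)$, so $1-\rho = (1-2\delta_0)/(1-\delta_0)$; the assumption $\delta_0 < 1/2$ ensures $1-\rho > 0$ and the division is legitimate.

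Finally, I would write $\|\bv\|_1 = \|\bv_S\|_1 + \|\bv_{S^c}\|_1 \le 2\|\bv_S\|_1 + 2\sigma_s(\bc)_1$ and plug in the bound on $\|\bv_S\|_1$. Collecting coefficients, one checks that $2 + \frac{4\rho}{1-\rho} = \frac{2(1+\rho)}{1-\rho} = \frac{2}{1-2\delta_0}$, and $\frac{2\tau}{1-\rho} = \frac{2}{\sqrt{1-\delta}} \cdot \frac{1-\delta_0}{1-2\delta_0}$. Inserting $\|\bA \bv\|_2 \le 2\eta$ from the first step then yields exactly \eqref{error_const:WREP}. There is no substantial obstacle here; the only point that requires care is bookkeeping of constants so that the parameters $\rho,\tau,\delta_0,\delta$ match the stated form, which is why I would keep the algebraic manipulations of $\rho/(1-\rho)$ and $(1+\rho)/(1-\rho)$ explicit.
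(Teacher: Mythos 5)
Your proof is correct and fills in the details of the exact argument the paper leaves implicit: the paper itself only cites the standard NSP-implies-recovery machinery from Foucart--Rauhut (Theorems 4.19 and 4.22), and your derivation is precisely that argument specialized to the $\ell_2$-robust null space property \eqref{NSP}. All the algebraic bookkeeping checks out --- the cone inequality, the substitution $\|\bv_S\|_1 \le \sqrt{s}\|\bv_S\|_2$, the identity $\rho = \delta_0/(1-\delta_0)$ giving $(1+\rho)/(1-\rho) = 1/(1-2\delta_0)$ and $1/(1-\rho) = (1-\delta_0)/(1-2\delta_0)$, and the final bound $\|\bA\bv\|_2 \le 2\eta$ --- and it reproduces \eqref{error_const:WREP} exactly.
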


In combining Propositions \ref{note:RIP_theorem}--\ref{prop:error_est}, for bounded orthonormal systems, the robust and stable recovery \eqref{error_const:WREP} is guaranteed with high probability given the number of samples
\begin{align}
\label{remark:WREP_con1}
m \gtrsim \delta_0^{-2} C_{\delta} \Theta^2 s\  \times \mbox{ log factors}, 
\end{align}
where $C_\delta$ represents the dependence of $m$ on $\delta$. As we derive Proposition \ref{note:RIP_theorem} from RIP estimates in \cite{Bou14,ChkifaDexterTranWebster15}, $C_\delta = \delta^{-6}$. However, better dependence on $\delta$ is possible, for example, $C_\delta = \delta^{-2}$ if applying the RIP estimates in \cite{Rauhut10,HR15}. Also, the log factor terms can take several different forms. Generally, large $\delta$ and $\delta_0$ are desirable for smaller number of samples, but this also lead to worse constants in reconstruction error estimates.

At first glance, the above number of samples established via restricted eigenvalue property seems to be more demanding than that obtained from the RIP ($m \gtrsim C_{\delta} \Theta^2 s\  \times ${ log factors}) in the sense that besides $\delta$, $m$ additionally depends on $\delta_0$. However, we can see that the dependence of error estimate constants on $\delta$ is different for these two approaches. In RIP-based error estimates, $\delta$ needs to be strictly bounded away from $1$ and the error grows quite fast with moderately large $\delta$, for example, with $\delta < 4/\sqrt{41}$, one has 
\begin{align*}
 \|\bc - \bc^{\#}\|_1 \le \frac{2({\sqrt{1-\delta^2}  + 3\delta/4})}{ {\sqrt{1-\delta^2} - 5\delta/4}} \sigma_s(\bc)_1 +  \frac{4 \sqrt{1+\delta}}{ {\sqrt{1-\delta^2} - 5\delta/4}}\cdot   \eta {\sqrt{s}},
\end{align*}
see \cite[Theorem 6.12]{FouRau13}; hence, an optimal dependence of $m$ on $\delta$ is critical. For the restricted eigenvalue property, as shown in \eqref{error_const:WREP}, $\delta$ appears only in the noise term, mildly affects the errors via the multiplication factor $1/\sqrt{1-\delta}$ and can be chosen close to $1$ without increasing this term much. As such, $C_\delta$ becomes less important. A more profound penalty arises from $\delta_0$, which more or less plays the role of $\delta$ in RIP approach. However, from \eqref{remark:WREP_con1}, we have seen that the number of measurements $m$ grows at rate $\delta_0^{-2}$ as $\delta_0$ decreases. This agrees with the best known dependence of $m$ on $\delta$ using RIP.




As uniform recovery follows directly from restricted eigenvalue property (Propositions \ref{lemma:NSP}--\ref{prop:error_est}), the rest of this paper is devoted to derive precise bounds on the number of samples such that the restricted eigenvalue property holds for random Legendre systems. The advantage of restricted eigenvalue property in refraining us from the upper bound of $\|\bA \bz\|_2$ is not clear in the above discussion, where the uniform bound of orthonormal systems is available. In the next part, we do not assume this information and exploit this advantage to its full strength. 

\section{Restricted eigenvalue property for univariate Legendre systems}
\label{sec:1d_anal}
We present an estimate for restricted eigenvalue property for one-dimensional Legendre basis. In this section, $\cU := [-1,1]$, $\varrho$ is the uniform probability measure on $\cU$, $\cJ$ is a subset of $\mathbb{N}$, and $\{\Psi_{j}\}_{{j}\in \cJ} \equiv \{L_{j}\}_{{j}\in \cJ}$, the system of univariate Legendre polynomials orthonormal with respect to $\varrho$. We denote
\begin{align*}
\psi(\by,\bz) &:= \sum_{j \in \cJ} {z}_{j} L_{j}(\by)  ,\quad\quad \by\in \mathcal{U},\;\; \bz \in \R^N.
\end{align*}
The envelope bound of Legendre polynomials is central to our analysis. It is well known that 
\begin{align}
\label{envelope}
|L_{j}(\by)| \le \frac{2/\sqrt{\pi}}{\sqrt[4]{1-\by^2}}, \ \forall \by \in (-1,1),\ j \in \mathbb{N}, 
\end{align}
see \cite{AntonovHolshevnikov81,Adcock15b}. This estimate is sharp in the sense that the constant $2/\sqrt{\pi}$ cannot be improved (see also an illustration in Figure \ref{fig:example2}). Let us define 
$
\Omega (\by) := \dfrac{2/\sqrt{\pi}}{\sqrt[4]{1-\by^2}}. 
$

We introduce our notion of ``good sample sets'', which we refer to as {preferable sets} throughout.

\begin{definition}
\label{def:PSS}
For $m > 0$, $0 < \gamma_0 < 1$, let $Q = \{\by_1,\ldots,\by_m\}$ be a set of samples where $\by_1,\ldots,\by_m$ are drawn independently from the uniform distribution on $[-1,1]$. Define $m$ i.i.d random variables 
\begin{align}
\label{define:z}
Z_i =  { \Omega(\by_i)} \exp\(-\dfrac{1}{2 \Omega^2(\by_i)}  \sqrt{\dfrac{m}{\gamma_0}} \),\quad \forall i \in \{1,\ldots, m\}. 
\end{align}
We name $T(Q)= \sum_{i=1}^m Z_i  $ the \textbf{test value} of $Q$, and call $Q$ a \textbf{preferable sample set} according to $\gamma_0$ if 
\begin{align}
\label{spL:est3}
T(Q)  \le \frac{32\sqrt{2}}{\pi \sqrt{\pi}} m^{1/4} {\gamma_0}^{3/4}  +   \frac{4\sqrt{2}}{\pi}  {{m}^{1/4}{\gamma_0}^{-1/4}}.
\end{align}
\end{definition}
Loosely speaking, preferable set conceptualizes our preference to sample sets with which $ \sum_{i=1}^m Z_i  $ is small. Observe that $Z_i$ is bigger when $\by_i$ approaches the endpoints ($\pm 1$), these sets slightly push sample points towards the center of the domain. Preferable sets according to large $\gamma_0$ (and small test value) should be less common. This intuition will be verified in Lemma \ref{lemma:good_set}, where we prove that $Q$ is a preferable sample set according to $\gamma_0$ with probability exceeding $1-\gamma_0$.  

The concept of preferable sets play a pivotal role in our analysis and is also useful in practice. We will show in both theory and computation that if we limit the sparse reconstruction to random Legendre matrices associated with preferable sample sets, better accuracy can be achieved. Enforcing this restriction is simple, as the computation cost of checking whether a sample set is preferable is negligible. Indeed, we can always generate multiple sets of $m$ random samples, and pick a preferable one to commence the reconstruction process. We could follow Definition \ref{def:PSS} strictly and choose a set that fulfills criterion \eqref{spL:est3}, but empirical approach is also equally effective: first ranking all the sample sets according to their test values, and then selecting the preferable set among those in lower percentile of test value distribution, say, among $(100(1-\gamma_0))\%$ of sample sets with smallest $ \sum_{i=1}^m Z_i$. 

%

\subsection{Main results}
\label{subsec:main_1d}
Our first main theorem is stated as follows. 
\begin{theorem}
\label{spL:WREP_theorem}
Let $\delta, \gamma$ and $\gamma_0$ be fixed parameters with 
$0<\delta < 1$, $0 < \gamma + \gamma_0 < 1$ and $\{L_{j}\}_{{j}\in \cJ}$ be a univariate Legendre orthonormal 
system of finite size $N = \#(\cJ)$. Denote $s_{\alpha,\delta} =  \frac{(1+\alpha)^2 s} {\delta^{3/2}} $ and assume that 
\begin{gather}
\label{spL:complexity}
\begin{aligned}
m \ge C (1+\alpha)^4{ s}^2  &\max\Big\{  \delta^{-12} \,   \gamma_0^{-1} \,{\log^2(s_{\alpha,\delta} )}  \log^2(2N),  
\\
& \delta^{-\frac{15}{2}} \,  \,{ \log^{\frac32}( s_{\alpha,\delta} )}  \log(2N),\ \ \delta^{-4 } \, { \log(s_{\alpha,\delta}) }\log\left( \frac{ { \log( s_{\alpha,\delta} )}}{\gamma \delta } \right) \Big\}.
 \end{aligned}
 \end{gather}
Let $\by_1,\by_2,\ldots,\by_m$ be drawn independently from the uniform distribution on $[-1,1]$. Then with probability exceeding $1-  (\gamma + \gamma_0)$, 
the normalized sampling matrix $\bA \in \mathbb{R}^{m\times {N}}$ (defined as in \eqref{defA}) satisfies
\begin{align}
  \|\bm{Az}\|_2^2 > (1 -  \delta) \|\bz\|^2_2,\ \ \forall \bm{z}\in {C}(s;\alpha). \label{note:problem1}
\end{align}
\end{theorem}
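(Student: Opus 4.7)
The plan is to bound the empirical deviation $\sup_{\bz \in \partial B_2 \cap C(s;\alpha)} \bigl| \|\bA\bz\|_2^2 - 1 \bigr|$ in high probability by $\delta$, which is equivalent to \eqref{note:problem1}. By Lemma \ref{lemma:subset}, the indexing set is contained in $K := (1+\alpha)\sqrt{s}\,B_1 \cap \partial B_2$, so it suffices to control the empirical process
\[
R \;=\; \sup_{\bz \in K} \Bigl| \tfrac{1}{m}\sum_{i=1}^m \psi(\by_i,\bz)^2 - \|\bz\|_2^2 \Bigr|,
\]
noting that $\E\psi(\by,\bz)^2 = \|\bz\|_2^2$ by orthonormality of $\{L_j\}$. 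I would split the analysis into a bound on $\E R$ and a concentration step around it.

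To bound $\E R$, I would first symmetrize to the Rademacher process $\tfrac{1}{m}\sum_i \veps_i \psi(\by_i,\bz)^2$ and then apply a generic chaining argument in the spirit of \cite{Bou14,ChkifaDexterTranWebster15}. The crucial departure from prior work is that the sub-exponential tail of $\psi(\by,\bz)^2$ is controlled \emph{pointwise} via $|\psi(\by,\bz)| \le \Omega(\by)\|\bz\|_1 \le (1+\alpha)\sqrt{s}\,\Omega(\by)$ on $K$, rather than by the useless uniform bound $\Theta$. I would run the chain over $K$ with a sub-Gaussian metric (tied to $\|\bz\|_2$) at the coarse levels and a sub-exponential metric at the fine scales, inserting at each sample $\by_i$ a truncation threshold of order $1/\Omega^2(\by_i)$ calibrated to the chain depth $\sim\sqrt{m/\gamma_0}$. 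Summing the chaining increments carefully, the residual tail collapses onto precisely the sample-dependent functional $\sum_i \Omega(\by_i)\exp\bigl(-\tfrac{1}{2\Omega^2(\by_i)}\sqrt{m/\gamma_0}\bigr) = T(Q)$ appearing in Definition \ref{def:PSS}. On the event $\{Q \text{ is preferable}\}$, which has probability at least $1-\gamma_0$ by Lemma \ref{lemma:good_set}, estimate \eqref{spL:est3} then bounds the chaining remainder, and the sample-complexity condition \eqref{spL:complexity} guarantees $\E R \le \delta/2$.

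The passage from $\E R$ to $R$ itself would use a Bousquet/Talagrand deviation inequality for suprema of empirical processes, as in \cite[Thm.~8.42]{FouRau13}. Its two inputs---the weak variance $\sup_{\bz\in K}\E[\psi(\by,\bz)^4]$ and the pointwise supremum of $|\psi(\by,\bz)^2-\|\bz\|_2^2|$ over $K$---are again bounded in terms of $\Omega$ rather than $\Theta$: the first by a direct moment calculation (orthonormality plus an integrable envelope $\Omega^4$ on $[-1,1]$), the second under the preferable-set event. Intersecting the preferable-set event with the concentration event yields the claimed probability $1-(\gamma+\gamma_0)$. The three terms in the max of \eqref{spL:complexity} correspond respectively to (i) the variance term of the deviation inequality, producing the $\delta^{-12}\gamma_0^{-1}\log^2(s_{\alpha,\delta})\log^2(2N)$ scaling, (ii) the envelope-adjusted Dudley/$\gamma_2$ integral over $K$, and (iii) the sub-exponential residue of the deviation inequality that carries the $\log(1/\gamma)$ factor.

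The main obstacle is step (ii): executing the chaining with a sample-dependent envelope bound rather than a uniform one. One must carry through a mixed sub-Gaussian/sub-exponential chain on $K$ with a per-sample truncation level, and verify that the leftover tail indeed collapses to exactly the test-value functional $T(Q)$. The particular scale $\sqrt{m/\gamma_0}$ buried inside the exponent of $Z_i$ in \eqref{define:z} is delicate: it must be chosen to balance the chaining remainder against the deviation-inequality residue so that the preferable-set hypothesis absorbs precisely the contribution that would otherwise reintroduce dependence on $\Theta$ or on the maximum polynomial degree, restoring an estimate that depends only on $s$ and $N$.
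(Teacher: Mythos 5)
Your proposal and the paper diverge at the top level. The paper does not symmetrize the supremum process nor invoke a Bousquet/Talagrand deviation inequality. Instead it follows the Bourgain-style quantization scheme: Step~1 builds a step-function quantization $\tilde\psi(\cdot,\bz)=\sum_l(1+\delta)^l\chi_{I_l}$ of $\psi(\cdot,\bz)$ at dyadic-like levels $(1+\delta)^l$, $l\in\cL$, with the level sets $I_l$ drawn from a finite class $F_l$ of subsets of $\cU$ whose cardinality is controlled by Lemma~\ref{note:lemma4} (Maurey's empirical method, which is where the preferable-set condition and the two pseudo-metrics \eqref{metric1}--\eqref{metric2} enter); Step~2 rewrites $\|\bz\|_2^2-\|\bA\bz\|_2^2$ in terms of $\varrho(I_l)-\#(Q\cap I_l)/m$ plus controlled error; Step~3 applies the elementary Bernoulli tail estimate of Lemma~\ref{lemma:tailbound} to each $\Delta\in F_l$ and a plain union bound over $\bigcup_l F_l$. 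There is no generic chaining on $K=(1+\alpha)\sqrt s\,B_1\cap\partial B_2$ with mixed sub-Gaussian/sub-exponential increments, and no concentration-of-measure step at the outer level: the probability budget $\gamma$ is spent directly in the union bound $\sum_l e^{-m\kappa_l\delta/16e}\,\#F_l$.

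Your route is a genuinely different template (the Rudelson--Vershynin/Rauhut one rather than the Bourgain/Chkifa--Dexter--Tran--Webster one), and it might in principle also work; but as written it has a concrete gap. Bousquet's inequality requires an almost-sure uniform bound on the summands $\sup_{\bz\in K}\bigl|\psi(\by,\bz)^2-\|\bz\|_2^2\bigr|$. Under the envelope bound this is only $\le(1+\alpha)^2 s\,\Omega^2(\by)+1$, and $\Omega^2$ is \emph{unbounded} on $(-1,1)$, so the deviation inequality does not apply as stated. Saying the pointwise supremum is controlled ``under the preferable-set event'' does not resolve this: the preferable-set condition bounds $T(Q)=\sum_i\Omega(\by_i)\exp\bigl(-\tfrac12\Omega^{-2}(\by_i)\sqrt{m/\gamma_0}\bigr)$, which is a soft weighted sum that goes to zero as any single $\Omega(\by_i)\to\infty$; it gives no $\ell^\infty$ control on $\max_i\Omega(\by_i)$, and in any case Bousquet is applied \emph{before} conditioning on properties of the drawn sample, not after. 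You would need an explicit truncation of the process together with a tail estimate for the truncated part, at which point you are effectively rebuilding the paper's quantization argument. Relatedly, your claimed mapping of the three $\max$ terms in \eqref{spL:complexity} to (i) variance, (ii) Dudley integral, (iii) sub-exponential residue does not match the actual source of those terms: in the paper both the $\delta^{-12}\gamma_0^{-1}$ term and the $\delta^{-15/2}$ term come from the two branches of the $\max$ in the covering-number bound \eqref{note:eq2b} (the $\sqrt{m/\gamma_0}$ branch and the $1/\sqrt{\varsigma'}$ branch, respectively, the former becoming self-referential in $m$ and hence quadratic after solving), while the $\delta^{-4}\log(\,\cdot/\gamma\delta)$ term is the union-bound residual $\log(\#\cL/\gamma)$, not a Talagrand residue.
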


The above result shows that for Legendre matrices to satisfy restricted eigenvalue property, the number of samples needs to scale quadratically with the sparsity and only logarithmically with the size of the coefficient vectors, therefore, verifying successful uniform recovery from underdetermined Legendre systems and uniform sampling. The dependence of sample complexity on $\delta$ is severe ($\delta^{-12}$); however, as discussed in Section \ref{sec:WREP}, $\delta$ is allowed to approach $1$ with our use of restricted eigenvalue property, and the growth of error bound when $\delta$ tends to $1$ is mild. Recall $\delta_0 = 1/(1+\alpha)$, the dependence of the number of samples on $\delta_0$ is $\delta_0^{-4}$.   

A large part of our analysis will be conducted under the condition that the sample sets are preferable, giving rise to an additional parameter $\gamma_0$. In particular, the restricted eigenvalue property is guaranteed under a reduced probability $1-  (\gamma + \gamma_0)$. It is possible to restore the rate $1-\gamma$ (which is normally seen in the literature), if we limit the sparse reconstruction to random matrices associated with preferable sets (which is simple to do).  

\begin{proposition}
\label{spL:WREP_theorem_pref_set}
Let $\delta, \gamma$ and $\gamma_0$ be fixed parameters in $(0,1)$ and $\{L_{j}\}_{{j}\in \cJ}$ be a univariate Legendre orthonormal 
system of finite size $N = \#(\cJ)$.  Denote $s_{\alpha,\delta} =  \dfrac{(1+\alpha)^2 s} {\delta^{3/2}} $ and assume that
\begin{gather}
\label{spL:complexity_prefset}
\begin{aligned}
m \ge C (1+\alpha)^4{ s}^2  &\max\Big\{  \delta^{-12} \gamma_0^{-1} \,  \,{\log^2(s_{\alpha,\delta} )}  \log^2(2N),  
\\
& \delta^{-\frac{15}{2}} \,  \,{ \log^{\frac32}( s_{\alpha,\delta} )}  \log(2N),\ \ \delta^{-4 } \, { \log(s_{\alpha,\delta}) }\log\left( \frac{ { \log( s_{\alpha,\delta} )}}{(1-\gamma_0) \gamma \delta } \right) \Big\},
 \end{aligned}
 \end{gather}
 Let $Q = \{\by_1,\by_2,\ldots,\by_m\}$ be a set of samples drawn independently from the uniform distribution on $[-1,1]$. If $Q$ is a preferable set according to $\gamma_0$, then with probability exceeding $1-\gamma$, 
the normalized sampling matrix $\bA \in \mathbb{R}^{m\times {N}}$ (defined as in \eqref{defA}) satisfies
\begin{align}
  \|\bm{Az}\|_2^2 > (1 -  \delta) \|\bz\|^2_2,\ \ \forall \bm{z}\in {C}(s;\alpha). \label{note:problem1_prefset}
\end{align}
\end{proposition}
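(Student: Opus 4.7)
The plan is to reduce Proposition~\ref{spL:WREP_theorem_pref_set} to (the proof of) Theorem~\ref{spL:WREP_theorem} via a clean conditional-probability argument, without redoing any chaining. In the forthcoming proof of Theorem~\ref{spL:WREP_theorem}, I expect the total failure probability $\gamma+\gamma_0$ to arise from a union bound over two essentially disjoint events: the event $E^c:=\{Q\text{ is not preferable according to }\gamma_0\}$, whose probability is controlled by Lemma~\ref{lemma:good_set} at $\gamma_0$; and a chaining/concentration event $F$ of probability at most $\gamma$, which is the only place the explicit $\log(1/\gamma)$ dependence in the third term of (\ref{spL:complexity}) enters. On $E\cap F^c$ the restricted eigenvalue bound (\ref{note:problem1}) is established by exploiting the envelope estimate $T(Q)\le$ RHS of (\ref{spL:est3}) throughout the chaining argument; in particular, the $\gamma_0^{-1}$ in the first term of (\ref{spL:complexity}) originates from the preferable-set threshold (\ref{spL:est3}), not from a probability budget, which is why it reappears unchanged in (\ref{spL:complexity_prefset}).

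Since Proposition~\ref{spL:WREP_theorem_pref_set} conditions on the event $E$, which has probability $\P(E)\ge 1-\gamma_0$, the conditional probability of any bad chaining event $F$ satisfies
$$
\P(F\mid E)\;\le\;\frac{\P(F)}{\P(E)}\;\le\;\frac{\P(F)}{1-\gamma_0}.
$$
Consequently, to guarantee $\P(F\mid E)\le \gamma$ it is enough to achieve the unconditional bound $\P(F)\le (1-\gamma_0)\gamma$ in the proof of Theorem~\ref{spL:WREP_theorem}. I would therefore rerun the argument of Theorem~\ref{spL:WREP_theorem} verbatim, substituting $\gamma\mapsto (1-\gamma_0)\gamma$ at the single step where the chaining tail bound is applied. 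Propagating this substitution through (\ref{spL:complexity}) leaves the first two entries of the maximum unchanged (they carry no $\gamma$-dependence) and converts the third entry into $\delta^{-4}\log(s_{\alpha,\delta})\log\bigl(\log(s_{\alpha,\delta})/((1-\gamma_0)\gamma\delta)\bigr)$, which is exactly (\ref{spL:complexity_prefset}).

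The one subtlety worth flagging is ensuring that conditioning on $E$ does not compromise any independence or symmetrization step used in the chaining. Since the preferable-set condition (\ref{spL:est3}) is a deterministic constraint on the realized sample values $\by_1,\dots,\by_m$, conditioning on $E$ simply restricts us to a measurable subset of sample configurations and leaves intact the pointwise envelope estimates $|L_j(\by_i)|\le \Omega(\by_i)$ that drive the chaining. I therefore do not anticipate a genuine obstacle: the proposition is effectively bookkeeping, isolating the deterministic consequence of preferability from the residual stochastic chaining bound, at the price of a harmless $(1-\gamma_0)^{-1}$ factor tucked inside a logarithm. The real mathematical content lives in Theorem~\ref{spL:WREP_theorem}; Proposition~\ref{spL:WREP_theorem_pref_set} should fall out immediately once that proof is in hand.
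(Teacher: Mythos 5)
Your proposal is correct and matches the paper's own proof essentially step for step: the paper also observes that the chaining failure event $F$ has unconditional probability at most $\gamma$, deduces $\P(F\mid E)\le \gamma/(1-\gamma_0)$ by conditioning on the preferable-set event $E$ (which has $\P(E)\ge 1-\gamma_0$), and then concludes ``by a change of variable,'' which is precisely your $\gamma\mapsto(1-\gamma_0)\gamma$ substitution in the third term of the sample-complexity bound. Your remark that conditioning on $E$ is harmless because preferability is a deterministic constraint on the realized samples, not a probabilistic coupling, is correct and makes explicit something the paper leaves implicit.
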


In Proposition \ref{spL:WREP_theorem_pref_set}, $\gamma_0$ can be considered a tuning parameter. Overall, the sample complexity is inversely proportional to $\gamma_0$; therefore, preferable sets according to high $\gamma_0$ allow reconstructions with lower number of samples and should be truly preferable. This fact is illustrated with numerical tests in Section \ref{sec:num_examples}. These sets are however increasingly uncommon as $\gamma_0$ tends to $1$. Also, $\gamma_0$ very close to $1$ may affect the third term of \eqref{spL:complexity_prefset} unfavorably. 

We will prove Theorem \ref{spL:WREP_theorem} and Proposition \ref{spL:WREP_theorem_pref_set} in Sections \ref{subsec:mainproof_1d} and \ref{sec:1d_prefset} correspondingly. First, some preparatory lemmas are needed. 

\subsection{Supporting lemmas}
\label{subsec:notation_1d}
We prove the following estimates involving the envelope bound $\Omega$. 
\begin{lemma}
\label{lemma:prob_tail}
For any $\mu >0$, there holds
$
\varrho\left( \by\in \cU: |\Omega(\by)| \ge \mu \right) \le \dfrac{16}{\pi^2 \mu^4}.
$
\end{lemma}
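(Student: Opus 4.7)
The plan is a direct computation exploiting the explicit form of $\Omega$ and the fact that $\varrho$ has constant density $1/2$ on $[-1,1]$. First I would rewrite the event $\{\by \in \cU : \Omega(\by) \ge \mu\}$ in terms of $\by$. Since
\[
\Omega(\by) = \frac{2/\sqrt{\pi}}{\sqrt[4]{1-\by^2}} \ge \mu
\quad\Longleftrightarrow\quad 1-\by^2 \le \frac{16}{\pi^2 \mu^4},
\]
the event is exactly $\{\by \in [-1,1] : \by^2 \ge 1 - 16/(\pi^2\mu^4)\}$.

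Next I would split on the size of $\mu$. If $\mu^4 \le 16/\pi^2$, then $16/(\pi^2\mu^4) \ge 1$ and the bound is trivially true because $\varrho(\cdot) \le 1$. Otherwise set $a := \sqrt{1 - 16/(\pi^2 \mu^4)} \in [0,1)$; the event becomes $\{|\by|\ge a\}$, whose $\varrho$-measure equals $1 - a$ (using density $1/2$ and total Lebesgue length $2(1-a)$).

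Finally I would invoke the elementary inequality $1 - \sqrt{1-x} \le x$ valid for all $x \in [0,1]$ (which follows immediately from the identity $1 - \sqrt{1-x} = x/(1+\sqrt{1-x})$ and $1+\sqrt{1-x}\ge 1$). Applied with $x = 16/(\pi^2\mu^4)$, this yields
\[
1 - a \;\le\; \frac{16}{\pi^2 \mu^4},
\]
which is precisely the desired bound. There is no real obstacle here; the whole argument is a single-line computation once the event is expressed in terms of $\by^2$, so I expect the proof in the paper to be essentially of this form.
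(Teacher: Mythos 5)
Your proposal is correct and follows essentially the same route as the paper: express the event as $\by^2 \ge 1 - 16/(\pi^2\mu^4)$, compute the measure as $1 - \sqrt{1-16/(\pi^2\mu^4)}$, and bound it by $16/(\pi^2\mu^4)$ via the inequality $1-\sqrt{1-x}\le x$. The only (minor) difference is that you explicitly handle the trivial case $16/(\pi^2\mu^4)\ge 1$, which the paper leaves implicit.
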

\begin{proof}
We have $ |\Omega(\by)| \ge \mu $ if and only if
$
\by^2 \ge 1- \dfrac{16}{\pi^2 \mu^4}.
$
There follows 
$$
\varrho\left( \by\in \cU: |\Omega(\by)| \ge \mu \right) \le 1 - \sqrt{1 - \frac{16}{\pi^2 \mu^4}} \le \frac{16}{\pi^2 \mu^4},
$$
as desired.
\end{proof}
\begin{lemma}
\label{lemma:end_set}
For any $0< \mu \le 1$ and $\overline{I}\subset \cU$ measurable with $\varrho(\overline{I}) \le \mu$, there holds 
$$
\int_{\overline{I}}  |\Omega(\by)|^2 d\varrho \le 2 \sqrt{\mu}.
$$
\end{lemma}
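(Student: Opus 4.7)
Since $d\varrho = \tfrac{1}{2}\, d\by$ on $[-1,1]$ and $|\Omega(\by)|^2 = \tfrac{4}{\pi\sqrt{1-\by^2}}$, the integral can be rewritten as
$$\int_{\overline{I}} |\Omega(\by)|^2 \, d\varrho \;=\; \frac{2}{\pi}\int_{\overline{I}} \frac{d\by}{\sqrt{1-\by^2}},$$
and the hypothesis $\varrho(\overline{I}) \le \mu$ becomes $|\overline{I}| \le 2\mu$ in Lebesgue measure. My plan is first to identify the extremal configuration of $\overline{I}$ and then to reduce the statement to a one-variable inequality verified by elementary calculus.

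For the extremal step, note that $\tfrac{1}{\sqrt{1-\by^2}}$ is even, strictly increasing on $[0,1)$, and singular at $\pm 1$. By a standard Hardy--Littlewood rearrangement argument (equivalently, by the Chebyshev substitution $\by = \cos\theta$, which turns the constraint into a bound on $\int \sin\theta\, d\theta$ over the corresponding $\theta$-set), the supremum of $\int_{\overline{I}} \tfrac{d\by}{\sqrt{1-\by^2}}$ over Borel sets with $|\overline{I}| \le 2\mu$ is attained at the symmetric endpoint set $\overline{I}^{\,*} := [-1,-(1-\mu)] \cup [1-\mu,1]$. Using $\int \tfrac{d\by}{\sqrt{1-\by^2}} = \arcsin \by$,
$$\int_{\overline{I}^{\,*}} \frac{d\by}{\sqrt{1-\by^2}} \;=\; 2\bigl(\arcsin 1 - \arcsin(1-\mu)\bigr) \;=\; 2\arccos(1-\mu),$$
so $\int_{\overline{I}}|\Omega|^2\, d\varrho \le \tfrac{4}{\pi}\arccos(1-\mu)$.

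It then remains to verify $\tfrac{4}{\pi}\arccos(1-\mu) \le 2\sqrt{\mu}$ for $\mu \in (0,1]$, i.e., that $h(\mu) := \tfrac{\pi}{2}\sqrt{\mu} - \arccos(1-\mu) \ge 0$. One checks $h(0^+) = 0$ and $h(1) = \tfrac{\pi}{2}-\tfrac{\pi}{2} = 0$, while
$$h'(\mu) \;=\; \frac{1}{\sqrt{\mu}}\left(\frac{\pi}{4} - \frac{1}{\sqrt{2-\mu}}\right)$$
vanishes only at the unique interior point $\mu_* = 2 - 16/\pi^2 \in (0,1)$, where it changes sign from positive to negative. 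Hence $h$ rises from $0$ to a positive maximum and falls back to $0$, so $h \ge 0$ throughout, which yields the claim. There is no serious obstacle; the step requiring a sentence of care is the rearrangement reduction, but this is entirely standard in one variable, and the final scalar inequality is sharp (equality at $\mu = 1$), which is consistent with the fact that $\int_{-1}^1 |\Omega|^2\, d\varrho = 2$.
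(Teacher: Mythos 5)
Your proof is correct and follows the same skeleton as the paper's: both reduce to the extremal endpoint configuration (giving $\int_{\overline{I}}|\Omega(\by)|^2\,d\varrho \le \tfrac{4}{\pi}\arccos(1-\mu)$) and then verify the scalar inequality $\arccos(1-\mu)\le \tfrac{\pi}{2}\sqrt{\mu}$. The only real difference is in that last step, where the paper invokes the concavity-type bound $\cos\kappa \le 1-4\kappa^2/\pi^2$ on $[0,\pi/2]$ (with $\kappa=\arccos(1-\mu)$) while you establish the equivalent inequality by a direct sign analysis of $h'$; you are also more explicit about the rearrangement reduction, which the paper compresses into the single unexplained line $\int_{\overline{I}}|\Omega(\by)|^2\,d\varrho \le \int_{1-\mu}^1|\Omega(\by)|^2\,d\by$.
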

\begin{proof}
We have 
$$
\int_{\overline{I}}  |\Omega(\by)|^2 d\varrho \le \int_{1-\mu}^1 |\Omega(\by)|^2 d\by = \frac{4}{\pi}\(\frac{\pi}{2} - \arcsin(1-\mu)\). 
$$
Denoting $\kappa = ({\pi}/{2} - \arcsin(1-\mu))$, then 
$$
1 - \mu = \sin(\pi/2 - \kappa) = \cos(\kappa) \le 1 - 4 \kappa^2/\pi^2,
$$
which implies $\kappa \le \dfrac{\pi}{2}\sqrt{\mu}$. 
\end{proof}
\begin{lemma}
\label{lemma:imp_integral1}
For all $\beta > 0 $ and $0\le \tau < 1$, there holds 
\begin{align}
 \label{imp_integral1}
 & \int_{-1}^{1} \frac{\exp\( - {\beta}{\sqrt{1-\by^2}}\)}{({1-\by^2})^{\tau}} d\varrho \le 2\Gamma(2-2\tau) {\beta}^{2\tau -2 }.
\end{align}
Here, $\Gamma$ denotes the gamma function. 
\end{lemma}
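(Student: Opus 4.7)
The plan is to recognize the right-hand side as a Gamma integral, $\Gamma(2-2\tau)\beta^{2\tau-2} = \int_0^\infty t^{1-2\tau} e^{-\beta t}\,dt$, and to reduce the left-hand side to this form by an explicit change of variables. Once the left-hand integrand is identified with the Gamma integrand modulo a controllable Jacobian factor, the inequality will follow by elementary bounds on that factor.

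First I would exploit the symmetry of the integrand in $y$ (and $d\varrho = \tfrac{1}{2} dy$) to reduce to $\int_0^1 \exp(-\beta\sqrt{1-y^2})(1-y^2)^{-\tau}\,dy$, then substitute $t = \sqrt{1-y^2}$ (so $y = \sqrt{1-t^2}$ and $dy = -t(1-t^2)^{-1/2}\,dt$), obtaining
$$
\int_0^1 \frac{t^{1-2\tau}e^{-\beta t}}{\sqrt{1-t^2}}\,dt.
$$
This integrand equals the Gamma integrand $t^{1-2\tau}e^{-\beta t}$ multiplied by the factor $(1-t^2)^{-1/2}$, which is $\sqrt{2}$-bounded on $[0,1/\sqrt 2]$ and has an integrable singularity at $t=1$.

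Next I would split the range at $t_0 = 1/\sqrt 2$. On $[0,t_0]$ the bound $(1-t^2)^{-1/2}\le\sqrt 2$ together with extension of the integration domain to $[0,\infty)$ yields
$$
\int_0^{t_0}\frac{t^{1-2\tau}e^{-\beta t}}{\sqrt{1-t^2}}\,dt \;\le\; \sqrt 2\int_0^\infty t^{1-2\tau}e^{-\beta t}\,dt \;=\; \sqrt 2\,\Gamma(2-2\tau)\beta^{2\tau-2}.
$$
On $[t_0,1]$ I would use the exponential decay $e^{-\beta t} \le e^{-\beta/\sqrt 2}$, the elementary bound $t^{1-2\tau} \le 2^{|1-2\tau|/2}$, and $\int_{t_0}^1 (1-t^2)^{-1/2}\,dt = \pi/4$; the residual exponential $e^{-\beta/\sqrt 2}$ is then traded for a polynomial factor $\beta^{2\tau-2}$ via the standard inequality $e^{-x}\le (p/e)^p x^{-p}$ applied with $p=2-2\tau$ and $x = \beta/\sqrt 2$.

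Summing the bulk and tail contributions, the bulk already produces $\sqrt 2\,\Gamma(2-2\tau)\beta^{2\tau-2}$, and the tail---being exponentially small relative to the bulk---has to fit into the remaining budget $(2-\sqrt 2)\,\Gamma(2-2\tau)\beta^{2\tau-2}$. The main obstacle is constant tracking: pinning the total prefactor at the sharp value $2$ uniformly in $\tau\in[0,1)$ requires careful choice of $t_0$ and likely a regime-separated analysis. An alternative, often cleaner, route starts from the substitution $y = \cos\theta$, which turns the integral into $\int_0^{\pi/2} e^{-\beta\sin\theta}\sin^{1-2\tau}\theta\,d\theta$, and then applies Jordan's inequality $\sin\theta \ge 2\theta/\pi$ to compare with $\int_0^\infty e^{-2\beta\theta/\pi}\theta^{1-2\tau}\,d\theta$; this gives the bound with constant $\pi/2<2$ for $\tau\ge 1/2$ and can be merged with the split-interval estimate to close the remaining gap for $\tau<1/2$.
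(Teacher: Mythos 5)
Your reduction to $\int_0^1 t^{1-2\tau}e^{-\beta t}(1-t^2)^{-1/2}\,dt$ via $t=\sqrt{1-y^2}$ is correct, and the target $2\Gamma(2-2\tau)\beta^{2\tau-2}=2\int_0^\infty t^{1-2\tau}e^{-\beta t}\,dt$ means you need to absorb the factor $(1-t^2)^{-1/2}$ at the cost of exactly a factor $2$. The difficulty is that neither branch of your argument actually reaches that constant near $\tau=0$, and the ``merge'' you propose cannot repair this because both branches overshoot simultaneously in that regime. Concretely, take $\tau=0$. The Jordan route gives $\int_0^{\pi/2}e^{-\beta\sin\theta}\sin\theta\,d\theta\le\int_0^\infty e^{-2\beta\theta/\pi}\theta\,d\theta=(\pi/2)^2\beta^{-2}\approx2.47\,\beta^{-2}$, already above $2\beta^{-2}$. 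The split route fares no better: with the split at an arbitrary $t_0$, the bulk costs $(1-t_0^2)^{-1/2}\beta^{-2}$ and the tail, bounded by $\sqrt{1-t_0^2}\,e^{-\beta t_0}$ and then by $e^{-x}\le(2/e)^2x^{-2}$ at $x=\beta t_0$, costs $4\sqrt{1-t_0^2}(e^2t_0^2)^{-1}\beta^{-2}$; minimizing the sum $(1-t_0^2)^{-1/2}+4\sqrt{1-t_0^2}/(e^2t_0^2)$ over $t_0\in(0,1)$ yields a minimum of roughly $2.15>2$ (attained near $t_0\approx0.76$). So no choice of $t_0$ closes the gap, and the lossiness is intrinsic to the crude exponential--to--power trade-off $e^{-x}\le(p/e)^px^{-p}$, which is tight only when $x=p$ while here $\beta$ ranges over all positive reals. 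The statement is therefore not proved by the proposal.

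The paper's argument avoids these losses entirely by a different mechanism. It writes $\int g\,d\varrho=\int_0^\infty\varrho(g\ge t)\,dt$, substitutes $t=e^{-\beta\sqrt u}/u^\tau$ (with $u=1-y^2$), observes $\varrho(|y|\ge\sqrt{1-u})=1-\sqrt{1-u}\le u$, and integrates by parts to arrive at $\int_0^1 e^{-\beta\sqrt u}u^{-\tau}\,du$. This is exactly a stochastic-dominance comparison: because $g$ is monotone in $u$ and the CDF of $u=1-y^2$ is pointwise $\le u$, one may replace the law of $u$ by the uniform law on $[0,1]$. Passing to $v=\beta\sqrt u$ then produces the factor $2$ from the Jacobian $du=2v\beta^{-2}\,dv$ and the Gamma integral with no slack elsewhere. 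If you want to make your substitution-based route work, you would need a comparable exact device in place of the pointwise bound on $(1-t^2)^{-1/2}$; the inequality $1-\sqrt{1-u}\le u$ (equivalently, a monotone-rearrangement or layer-cake argument) is the missing ingredient.
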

\begin{proof}
Let us define $g(\by ) := \dfrac{\exp\( - {\beta}{\sqrt{1-\by^2}}\)}{({1-\by^2})^{\tau}} $. We have 
\begin{align*}
& \int\limits_{-1}^{1} g(\by ) d\varrho = \int\limits_{0}^{\infty} \varrho\left(\by \in \cU: g(\by ) \ge t\right) dt 
=  \int\limits_0^{\exp(-\beta)} dt + \int\limits_{\exp(-\beta)}^{\infty} \varrho\left(\by \in \cU:  g(\by ) \ge t\right) dt,  
\end{align*}
since $g(\by) \ge \exp(-\beta),\, \forall \by\in \cU $. By substituting $t = {\exp\( - {\beta}{\sqrt{u}}\)}/{u^{\tau}}$, it gives
  \begin{align}
  \int_{-1}^{1} g(\by ) d\varrho &= \exp(-\beta) + \int_{1}^{0} \varrho\(\by \in \cU:  g(\by ) \ge \frac{\exp( - {\beta}{\sqrt{u}})}{u^{\tau}} \) \( \frac{\exp( - {\beta}{\sqrt{u}})}{u^{\tau}}\)' du \notag
 \\
 & = \exp(-\beta) + \int_{1}^{0} \varrho\(\by \in \cU:  |\by| \ge \sqrt{1-u} \) \( \frac{\exp( - {\beta}{\sqrt{u}})}{u^{\tau}}\)' du .  \label{spL:est2b}
  \end{align}
  With notice that $ \varrho\(\by \in \cU:  |\by| \ge \sqrt{1-u} \) = 1-\sqrt{1-u} \le u$, there follows 
  \begin{align*}
  \int_{-1}^{1} g(\by ) d\varrho \le \exp(-\beta) + \int_1^0 u \( \frac{\exp( - {\beta}{\sqrt{u}})}{u^{\tau}}\)' du =  \int_0^1  \frac{\exp( - {\beta}{\sqrt{u}})}{u^{\tau}} du, 
  \end{align*}
  where the second equality comes from integration by parts. Substituting $v = \beta\sqrt{u}$ yields 
  \begin{align*}
   \int_{-1}^{1} g(\by ) d\varrho \le  \int_0^{\beta}  \frac{\beta^{2\tau }e^ {- v}}{v^{2\tau}} . \frac{2v}{\beta^2} dv = 2 \beta^{2\tau -2} \int_0^{\beta} v^{-2\tau +1} e^{-v} dv \le 2 \Gamma(2-2\tau) \beta^{2\tau -2}. 
  \end{align*}
\end{proof}

In the next lemma, we verify the probability of a set of samples from uniform distribution on $[-1,1]$ to be preferable.   
\begin{lemma}
\label{lemma:good_set}
Let $m>0,\, 0< \gamma_0 < 1$, and $Q = \{\by_1,\ldots,\by_m\}$ be a set of samples where $\by_1,\ldots,\by_m$ are drawn independently from the uniform distribution on $[-1,1]$. Then, $Q$ is a preferable sample set according to $\gamma_0$ with probability exceeding $1-\gamma_0$.  
\end{lemma}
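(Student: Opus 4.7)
The plan is to apply Chebyshev's inequality to the sum $T(Q) = \sum_{i=1}^m Z_i$ of i.i.d.\ variables, and to use Lemma \ref{lemma:imp_integral1} twice to bound $\mathbb{E}[T(Q)]$ and $\mathrm{Var}(T(Q))$. The two terms on the right-hand side of criterion \eqref{spL:est3} correspond, respectively, to the mean and the Chebyshev fluctuation, so the whole argument reduces to matching constants.

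First I would rewrite $Z_i$ using the explicit form of $\Omega$. Since $\Omega^2(\by) = \frac{4/\pi}{\sqrt{1-\by^2}}$, we have
\begin{equation*}
Z_i = \frac{2/\sqrt{\pi}}{(1-\by_i^2)^{1/4}} \exp\!\left(-\beta\sqrt{1-\by_i^2}\right), \qquad \beta := \frac{\pi}{8}\sqrt{m/\gamma_0}.
\end{equation*}
Applying Lemma \ref{lemma:imp_integral1} with $\tau = 1/4$ yields
\begin{equation*}
\mathbb{E}[Z_i] \le \frac{2}{\sqrt{\pi}}\cdot 2\Gamma(3/2)\,\beta^{-3/2} = 2\beta^{-3/2} = \frac{32\sqrt{2}}{\pi\sqrt{\pi}}\,(m/\gamma_0)^{-3/4},
\end{equation*}
so that $\mathbb{E}[T(Q)] = m\,\mathbb{E}[Z_1] \le \frac{32\sqrt{2}}{\pi\sqrt{\pi}}\,m^{1/4}\gamma_0^{3/4}$. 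This reproduces the first term of \eqref{spL:est3} exactly.

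Next, to bound $\mathrm{Var}(T(Q)) \le m\,\mathbb{E}[Z_1^2]$, observe that
\begin{equation*}
Z_i^2 = \frac{4/\pi}{(1-\by_i^2)^{1/2}}\exp\!\left(-2\beta\sqrt{1-\by_i^2}\right),
\end{equation*}
and apply Lemma \ref{lemma:imp_integral1} with $\tau = 1/2$ and $2\beta$ in place of $\beta$. Since $\Gamma(1) = 1$, this gives $\mathbb{E}[Z_1^2] \le \frac{4}{\pi}\cdot \frac{1}{\beta} = \frac{32}{\pi^2}\sqrt{\gamma_0/m}$, hence $\mathrm{Var}(T(Q)) \le \frac{32}{\pi^2}\sqrt{m\gamma_0}$. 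Chebyshev's inequality then gives
\begin{equation*}
\mathbb{P}\!\left(T(Q) > \mathbb{E}[T(Q)] + t\right) \le \frac{\mathrm{Var}(T(Q))}{t^2} \le \frac{32\sqrt{m\gamma_0}}{\pi^2 t^2},
\end{equation*}
and solving $\frac{32\sqrt{m\gamma_0}}{\pi^2 t^2} = \gamma_0$ yields $t = \frac{4\sqrt{2}}{\pi}\,m^{1/4}\gamma_0^{-1/4}$, which is precisely the second term of \eqref{spL:est3}. Combining the two estimates, with probability at least $1-\gamma_0$ we have $T(Q) \le \mathbb{E}[T(Q)] + t$, and the right-hand side is bounded by the sum in \eqref{spL:est3}, so $Q$ is preferable according to $\gamma_0$.

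There is no real obstacle beyond careful bookkeeping of constants; the only mildly delicate point is choosing the right $\tau$ in Lemma \ref{lemma:imp_integral1} for each moment computation and verifying that the values $\Gamma(3/2) = \sqrt{\pi}/2$ and $\Gamma(1) = 1$ combine with the prefactors from $\Omega$ to produce the exact constants $\frac{32\sqrt{2}}{\pi\sqrt{\pi}}$ and $\frac{4\sqrt{2}}{\pi}$ appearing in the definition of preferability.
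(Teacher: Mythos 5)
Your proof is correct and follows essentially the same route as the paper: apply Lemma \ref{lemma:imp_integral1} with $\tau=1/4$ (for the mean) and $\tau=1/2$ with doubled $\beta$ (for the second moment), then apply Chebyshev's inequality and match constants to the right-hand side of \eqref{spL:est3}. The only cosmetic difference is that the paper parameterizes the Chebyshev threshold as $\lambda=\sqrt{m/\gamma_0}(\E|Z|^2)^{1/2}$ and then substitutes the moment bound, whereas you plug the bound in first and solve for $t$; the resulting inequalities and constants are identical.
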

\begin{proof}
Let $Z_i$ be defined as in \eqref{define:z}. By Lemma \ref{lemma:imp_integral1}, 
\begin{align}
\label{spL:est2}
\E Z_i \le \, \frac{2^{11/2}}{\pi \sqrt{\pi}} \left(\frac{\gamma_0}{m}\right)^{3/4}, \ \ \ \text{Var}(Z_i) \le \E |Z_i|^2 \le  \frac{2^5}{\pi^2} \left(\frac{\gamma_0}{m}\right)^{1/2}. 
\end{align}
Referring $\E Z_i$ and $\text{Var}(Z_i)$ as $\E Z$ and $\text{Var}(Z)$, we apply Chebyshev inequality to obtain
$$
\P\( | \sum_{i=1}^m Z_i - m\E Z | \ge \lambda  \) \le \frac{m}{\lambda^2} \text{Var}(Z). 
$$
Setting $\lambda =  \sqrt{\frac{m}{\gamma_0}} (\E |Z|^2)^{1/2}$, then with probability exceeding $1-\gamma_0$, 
\begin{align}
\label{spL:est5}
 \sum_{i=1}^m Z_i  \le m \E Z  +    \sqrt{\frac{m}{\gamma_0}} (\E |Z|^2)^{1/2}.
\end{align}
Substituting \eqref{spL:est2} to \eqref{spL:est5} yields the conclusion.  
\end{proof}

We also need some tail estimates, the proofs of which can be found in \cite{ChkifaDexterTranWebster15}.

\begin{lemma}
\label{lemma:hoeffding}
Let $X_1,\ldots,X_M$ be $M$ independent identically distributed real-valued 
random variables satisfying $|X_k| \le a$ and $\E[X_k] = X$ for all $k$. We denote 
$\overline{X} = \frac1M \sum_{k=1}^M X_k$. For every $\mu> 0$, 
\begin{align}
\label{est:hoeffding}
\P\(\left| \overline{X} - X \right| \ge \mu \) \le 2 \exp\({-\frac{M \mu^2}{ 2a^2}}\).
\end{align}   
\end{lemma}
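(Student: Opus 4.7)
The plan is to prove this via the classical Chernoff bounding technique, applying Markov's inequality to the moment generating function of the centered sum, with Hoeffding's lemma controlling each factor. This is the textbook derivation of Hoeffding's concentration inequality for bounded i.i.d.\ variables, and I would not expect any novel difficulties here since $|X_k|\le a$ gives a very clean bound on the centered variables.

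First, I would set $Y_k := X_k - X$ for $k=1,\ldots,M$, observing that $\E[Y_k]=0$ and, since $|X|\le a$ by Jensen, $|Y_k|\le 2a$ almost surely. Thus each $Y_k$ takes values in an interval of length at most $2a$. The workhorse is Hoeffding's lemma: if $Y$ is a mean-zero random variable bounded in an interval of length $L$, then $\E[e^{\lambda Y}]\le \exp(\lambda^2 L^2/8)$ for every $\lambda\in\R$. Applying this to each $Y_k$ with $L=2a$ yields
\[
\E\!\left[e^{\lambda Y_k}\right]\le \exp\!\left(\tfrac{\lambda^2 a^2}{2}\right).
\]

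Next, by independence, $\E\bigl[\exp(\lambda\sum_{k=1}^M Y_k)\bigr]\le \exp(M\lambda^2 a^2/2)$. For the upper tail, Markov's inequality gives, for any $\lambda>0$,
\[
\P\!\left(\overline{X}-X\ge \mu\right)=\P\!\left(\sum_{k=1}^M Y_k\ge M\mu\right)\le e^{-\lambda M\mu}\,\E\!\left[e^{\lambda\sum_k Y_k}\right]\le \exp\!\left(-\lambda M\mu+\tfrac{M\lambda^2 a^2}{2}\right).
\]
Optimizing the right-hand side in $\lambda$ by choosing $\lambda=\mu/a^2$ produces the bound $\exp(-M\mu^2/(2a^2))$. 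An identical argument applied to $-Y_k$ (which has the same distributional bounds) controls the lower tail $\P(\overline{X}-X\le -\mu)$ by the same quantity. A union bound over the two tails then yields the factor of $2$ and completes the proof.

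The only nontrivial ingredient is Hoeffding's lemma itself, which is the main obstacle in a fully self-contained write-up; I would establish it by noting that, since $e^{\lambda y}$ is convex, one can bound $e^{\lambda Y}$ above by the linear interpolation between the endpoints of the interval containing $Y$, take expectations, and then show that the resulting function of $\lambda$ has second derivative at most $L^2/4$, so a Taylor expansion around $\lambda=0$ (where the value and derivative vanish because $\E Y=0$) gives the stated sub-Gaussian estimate. Everything else is routine.
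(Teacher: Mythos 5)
Your proof is correct in substance, and it is the standard Chernoff--Hoeffding derivation: center the variables, bound each moment generating factor by Hoeffding's lemma, apply Markov's inequality, optimize in $\lambda$, and take a union bound over the two tails. The paper itself does not prove this lemma---it only cites \cite{ChkifaDexterTranWebster15} for the two tail estimates---so there is no in-text argument to compare against, but this is exactly the textbook route, and your optimal choice $\lambda=\mu/a^2$ does yield the stated exponent $M\mu^2/(2a^2)$.

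One logical wrinkle deserves a fix. You write that $|Y_k|\le 2a$ almost surely and conclude ``thus each $Y_k$ takes values in an interval of length at most $2a$.'' That inference is incorrect: $|Y_k|\le 2a$ only confines $Y_k$ to $[-2a,2a]$, an interval of length $4a$, and feeding $L=4a$ into Hoeffding's lemma would give the weaker exponent $M\mu^2/(8a^2)$. The correct (and simpler) observation is that $Y_k=X_k-X$ is a deterministic shift of $X_k$, and $X_k$ takes values in $[-a,a]$, an interval of length $2a$; hence $Y_k$ takes values in the shifted interval $[-a-X,\,a-X]$, also of length exactly $2a$. With the justification repaired, your application of Hoeffding's lemma with $L=2a$ and the remainder of the argument are fine.
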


{\begin{lemma}
\label{lemma:tailbound}
Let $X_1,\ldots,X_M$ be $M$ independent identically distributed Bernoulli 
random variables with $\E[X_k] = X$ for all $k$. Denote 
$\overline{X} = \frac1M\sum_{k=1}^M X_k$. Then, for every $0< \mu_1 < 1$, 
$\mu_2> 0 $ and $M \ge \frac{16e}{\mu_1\mu_2}$, there holds
\begin{align}
\label{est:tailbound}
\P\( |\overline{X} - X|  \ge \mu_1 X + \mu_2\) \le \exp\(- \frac{M\mu_1\mu_2}{16e}\).
\end{align}   
\end{lemma}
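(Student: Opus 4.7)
The statement is a two-sided Bernstein-type tail bound for the sample mean of $M$ i.i.d.\ Bernoulli random variables, featuring a mixed multiplicative--additive deviation threshold $\mu_1 X + \mu_2$. My plan is to apply a standard Chernoff/Bernstein exponential inequality to the sum $S := \sum_{k=1}^M X_k$, substitute the threshold $t = M(\mu_1 X + \mu_2)$, and then simplify the resulting exponent using elementary inequalities together with the hypothesis $M \ge 16e/(\mu_1 \mu_2)$.

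The first step is Bernstein's inequality: since $|X_k - X| \le 1$ and $\mathrm{Var}(X_k) = X(1-X) \le X$, one has
$$\P\!\left(|S - MX| \ge t\right) \le 2 \exp\!\left(-\frac{t^2/2}{MX + t/3}\right)$$
for every $t > 0$. This follows from the exponential-moment method applied to $X_k - X$, using $\E[\exp(\theta(X_k - X))] \le \exp(X(e^\theta - 1 - \theta))$, optimizing over $\theta > 0$, and invoking the standard Bennett lemma $(1+u)\log(1+u) - u \ge u^2/(2 + 2u/3)$; the two-sided form comes from a union bound over the upper and lower tails. Substituting $t = M(\mu_1 X + \mu_2)$ and dividing through by $M$, the task reduces to showing that
$$\frac{M(\mu_1 X + \mu_2)^2/2}{X + (\mu_1 X + \mu_2)/3} \ge \frac{M\mu_1\mu_2}{16e}$$
with enough slack to absorb the prefactor $2$.

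The main obstacle is calibrating constants so as to land on exactly $1/(16e)$: the denominator of the exponent depends on $X$, which is unconstrained, so a single bound on the numerator will not do. I would split into two regimes. When $\mu_2 \lesssim \mu_1 X$ (so the $X$-term in the denominator dominates), the AM--GM inequality $(\mu_1 X + \mu_2)^2 \ge 4\mu_1\mu_2 X$ controls the numerator from below by $2M\mu_1\mu_2 X$, and cancellation with the denominator yields an exponent of order $M\mu_1\mu_2$. When $\mu_2 \gtrsim \mu_1 X$ (so $\mu_2$ dominates), the trivial bound $(\mu_1 X + \mu_2)^2 \ge \mu_2(\mu_1 X + \mu_2)$ gives an exponent of order $M\mu_2 \gg M\mu_1\mu_2$. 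The hypothesis $M\mu_1\mu_2 \ge 16e$ then guarantees $\exp(-M\mu_1\mu_2/(16e)) \le 1/e < 1/2$, which lets the leading factor of $2$ from the two-sided bound be absorbed into the exponent at the cost of a slight tightening of the constant, producing the claimed form. Since the statement is attributed to \cite{ChkifaDexterTranWebster15}, the specific value $16e$ is simply what falls out of this routine calibration; one could alternatively work directly with Bennett's inequality rather than its Bernstein relaxation and obtain the same rate with possibly sharper numerical constants.
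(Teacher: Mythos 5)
The paper never actually proves this lemma; it states only that ``some tail estimates, the proofs of which can be found in \cite{ChkifaDexterTranWebster15},'' so there is no in-paper proof to compare against. That caveat aside, your Bernstein-plus-case-split plan is mathematically sound and does produce the stated bound with room to spare. Writing $t = M(\mu_1 X + \mu_2)$, the Bernstein exponent is $\frac{M(\mu_1 X + \mu_2)^2/2}{X + (\mu_1 X + \mu_2)/3}$, and the split you propose should be made at the explicit threshold $\mu_2 = \mu_1 X$: when $\mu_2 \le \mu_1 X$, AM--GM gives $(\mu_1 X + \mu_2)^2 \ge 4\mu_1\mu_2 X$ while the denominator is at most $\tfrac{5}{3}X$ (using $\mu_1<1$), yielding an exponent $\ge \tfrac{6}{5}M\mu_1\mu_2$; when $\mu_2 \ge \mu_1 X$, one has $X \le \mu_2/\mu_1$, the denominator is at most $\mu_2/\mu_1 + \tfrac23\mu_2$, the numerator is at least $\mu_2^2/2$, and the exponent is $\ge \tfrac{3}{10}M\mu_1\mu_2$. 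In the worse case the exponent exceeds $\tfrac{3}{10}M\mu_1\mu_2$, and since $\tfrac{3}{10}-\tfrac{1}{16e}>\tfrac14$ and the hypothesis forces $M\mu_1\mu_2 \ge 16e > 4\log 2$, the factor of $2$ from the two-sided union bound is absorbed with a large margin. Two things to tighten if you write this out: (a) replace the ``$\lesssim$'' heuristics with the precise threshold above and run the arithmetic, and (b) note that the conclusion you get is actually considerably stronger than stated, so $16e$ is merely the constant the cited reference chose to record, not a sharp artifact of the Bernstein route. Your observation that a direct Chernoff/Bennett argument (via $\E e^{\theta(X_k-X)}\le e^{X(e^\theta-1-\theta)}$ and a restriction to $\theta\le 1$, which is where a factor of $e$ naturally enters) would likewise work, and is plausibly closer to what \cite{ChkifaDexterTranWebster15} does, is reasonable, but cannot be confirmed from this paper alone.
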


}

Let $
\cE_{s,\alpha} :=  \partial B_2 \cap C(s;\alpha), 
$
we conclude this subsection by proving the following important result involving covering number for $\cE_{s,\alpha}$. This step resembles but considerably extends the previous covering number estimates for $\ell_1$ balls, e.g., \cite{RudelsonVershynin08,Rauhut10,Bou14,ChkifaDexterTranWebster15}, in the sense that a much more complicated pseudo-metric is considered herein.

\begin{lemma}
\label{note:lemma4}
For $0 < \varsigma < 1$, $0 < \gamma_0 < 1$, $\mu >0$, and $m>0$, there 
exists a set $D \subset \R^N$ such that the following hold:
\begin{enumerate}
\item[(i)] If $Q = \{\by_1,\ldots,\by_m\}$ is a set of $m$ i.i.d. samples drawn from $(\cU,\varrho)$, preferable according to $\gamma_0$, then for all $\bz\in \cE_{s,\alpha}$, there exists $\bz' \in D$ depending on $\bz$ and $Q$ satisfying 
\begin{align}
|\psi(\by, \bm{z} - \bm{z}')| &\le \mu \quad\mbox{for all }\by \mbox{ in a subset }\cU^\star \mbox{ of } \cU\mbox{ with }\varrho(\cU^\star) > 1-\varsigma,  \label{metric1}
\\
|\psi(\by_i, \bm{z} - \bm{z}')| &\le \mu\quad \mbox{for all }\by_i \in Q. \label{metric2}
\end{align}

\item[(ii)] The cardinality of $D$ satisfies 
\begin{align}
\log(\#(D)) \le\,  C(1+\alpha)^2\frac{ s}{\mu^2} \log(2N) \max\left\{\frac{1}{\sqrt{ \varsigma }},\sqrt{\frac{m}{\gamma_0}}\right\} .
\end{align}
 
\end{enumerate}
\end{lemma}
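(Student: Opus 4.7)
The plan is an empirical approximation (Maurey) argument adapted to the envelope $\Omega(\by)$. By Lemma \ref{lemma:subset}, $\cE_{s,\alpha} \subset R B_1$ with $R := (1+\alpha)\sqrt{s}$, so for each $\bz \in \cE_{s,\alpha}$ I write $\bz = R w$ with $\|w\|_1 \le 1$ and extend $w$ to a probability law $\nu_w$ on $\{0\}\cup\{\mathrm{sign}(w_j) e_j : j \in \cJ\}$ (assigning mass $|w_j|$ to $\mathrm{sign}(w_j) e_j$ and residual $1-\|w\|_1$ to $0$). Drawing $Y_1,\ldots,Y_M$ i.i.d.\ from $\nu_w$ and setting $\bz' := R \bar Y$ with $\bar Y := M^{-1}\sum_k Y_k$, the universal net $D$ collecting all such $R\bar Y$ has cardinality at most $(2N+1)^M$, so $\log\#(D)\le M\log(2N+1)$. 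The task is to show that $M$ of order $R^2\mu^{-2}\max(1/\sqrt{\varsigma},\sqrt{m/\gamma_0})$ suffices to realize \eqref{metric1}--\eqref{metric2}; here $\bz'$ is allowed to depend on $\bz$ and $Q$ even though $D$ does not.

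For each fixed $\by$, the centered terms $R(Y_k - w)^T L(\by)$ are pointwise bounded by $2R\Omega(\by)$ (using $|L_j(\by)|\le \Omega(\by)$ together with $\|\bz\|_1,\|w\|_1\le R$). Lemma \ref{lemma:hoeffding} applied to their mean gives
\begin{equation*}
\P\bigl(|\psi(\by, \bz - \bz')| > \mu\bigr) \;\le\; 2\exp\bigl(-a/\Omega^2(\by)\bigr), \qquad a := M\mu^2/(8R^2).
\end{equation*}
Let $A$ be the event that \eqref{metric2} holds and $B$ the event that \eqref{metric1} holds with $\cU^\star := \{\by : |\psi(\by,\bz-\bz')|\le \mu\}$. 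It suffices to choose $M$ so that $\P(A^c) + \P(B^c) < 1$, whence some realization in $A\cap B$ exists and supplies $\bz'$. For $B$, Fubini, Markov, and Lemma \ref{lemma:imp_integral1} (at $\tau = 0$, $\beta = a\pi/4$) bound $\P(B^c) \le 64/(\pi^2 \varsigma a^2)$, which is $< 1/4$ whenever $a \ge C_1/\sqrt{\varsigma}$. For $A$, the union bound gives $\P(A^c) \le 2\sum_i \exp(-a/\Omega^2(\by_i))$, and this is where preferability enters.

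The crux is to show that for $a = \Lambda c$ with $c := \tfrac12\sqrt{m/\gamma_0}$ and $\Lambda$ a sufficiently large absolute constant, one has $\sum_{i=1}^m \exp(-a/\Omega^2(\by_i)) \le \tfrac{1}{8}$ for every preferable $Q$. To prove this, introduce $X_i := 1/\Omega^2(\by_i)\in(0,\pi/4]$, so that $Z_i = X_i^{-1/2}\exp(-cX_i)$, and partition the indices dyadically around the scale $(\Lambda c)^{-1}$: let $S_0 := \{i : X_i \le (\Lambda c)^{-1}\}$ and $S_k := \{i : 2^{k-1}(\Lambda c)^{-1} < X_i \le 2^k(\Lambda c)^{-1}\}$ for $k \ge 1$. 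Pointwise lower bounds $Z_i \ge \sqrt{\Lambda c}\exp(-1/\Lambda)$ on $S_0$ and $Z_i \ge \sqrt{\Lambda c/2^k}\exp(-2^k/\Lambda)$ on $S_k$, combined with $\sum_i Z_i \le T(Q)$, yield the shell counts
\begin{equation*}
|S_0| \le \frac{e\, T(Q)}{\sqrt{\Lambda c}}, \qquad |S_k| \le \frac{T(Q)\sqrt{2^k}}{\sqrt{\Lambda c}}\exp\!\bigl(2^k/\Lambda\bigr) \quad (k\ge 1).
\end{equation*}
With $a = \Lambda c$, each term $\exp(-aX_i)$ is bounded by $1$ on $S_0$ and by $\exp(-2^{k-1})$ on $S_k$, giving
\begin{equation*}
\sum_i \exp(-aX_i) \le \frac{T(Q)}{\sqrt{\Lambda c}}\Bigl[e + \sum_{k\ge 1}\sqrt{2^k}\exp\bigl(2^k/\Lambda - 2^{k-1}\bigr)\Bigr],
\end{equation*}
where the bracketed series converges to an absolute constant for any $\Lambda > 2$. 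The preferability bound \eqref{spL:est3} reads $T(Q)/\sqrt{c} \le \sqrt{2}(C_{T,1}\gamma_0 + C_{T,2})$ after one substitutes $\sqrt{c} = (m/\gamma_0)^{1/4}/\sqrt{2}$; in particular $T(Q)/\sqrt{c}$ is uniformly $O(1)$ in both $m$ and $\gamma_0$. Hence $\sum_i \exp(-aX_i) \le C_2/\sqrt{\Lambda}$, which is $< 1/8$ once $\Lambda$ is taken large enough.

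Combining the two tail estimates, any $a \ge C\max(1/\sqrt{\varsigma},\sqrt{m/\gamma_0})$ ensures $\P(A^c)+\P(B^c) < 1$, and solving $a = M\mu^2/(8R^2)$ for $M$ gives the claimed bound $\log\#(D) \le M\log(2N+1) \le C(1+\alpha)^2(s/\mu^2)\log(2N)\max(1/\sqrt{\varsigma},\sqrt{m/\gamma_0})$. The main obstacle throughout is precisely the discrete sum appearing in $\P(A^c)$: a crude split $\{\Omega\le \Omega_0\}$ versus $\{\Omega>\Omega_0\}$ forces $\Omega_0 \gtrsim T(Q)$ to absorb the tail via the test value and then $a \gtrsim \Omega_0^2\log m$ to damp the bulk via a direct union bound, introducing a spurious $\log m$ factor. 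The dyadic-shell decomposition above, carefully calibrated to the same exponent $c$ built into the definition of $Z_i$ (and hence of preferability), converts the test-value bound directly into geometrically decaying shell counts, and thus extracts the clean $\sqrt{m/\gamma_0}$ dependence without logarithmic loss.
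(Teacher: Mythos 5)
Your proof is correct and takes a genuinely different route from the paper's on the crucial second half of the argument. For condition \eqref{metric1} both you and the paper use Maurey's empirical method, Hoeffding, Fubini, and Markov in essentially the same way (you add a zero atom to the law, the paper does not — immaterial). The real divergence is in handling condition \eqref{metric2}. The paper uses symmetrization: it bounds $\E_{\overline\bz}\bigl(\max_{\by_i\in Q}|\psi(\by_i,\bz-\overline\bz)|\bigr)$ by a Rademacher process, controls that process by integrating a subgaussian tail, and chooses the truncation level $\kappa$ so that the resulting sum is exactly $\sum_i \Omega(\by_i)\exp\bigl(-\tfrac{1}{2\Omega^2(\by_i)}\sqrt{m/\gamma_0}\bigr) = T(Q)$; the preferability hypothesis then bounds this directly, and Markov converts the expectation bound into a probability $>2/3$. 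You instead apply a direct union bound over the $m$ sample points, which produces the different quantity $\sum_i\exp(-a/\Omega^2(\by_i))$ — this is \emph{not} the test value $T(Q)$ (the factor $\Omega(\by_i)$ is missing) — and you bridge the gap with a dyadic-shell decomposition in $X_i = 1/\Omega^2(\by_i)$ calibrated to the scale $(\Lambda c)^{-1}$ with $c = \tfrac12\sqrt{m/\gamma_0}$: the strict monotonicity of $X\mapsto X^{-1/2}e^{-cX}$ turns the test-value bound into geometrically decaying shell counts, and the geometric series $\sum_k\sqrt{2^k}\exp(2^k/\Lambda - 2^{k-1})$ absorbs them. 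Both routes end at the same sample complexity $M \gtrsim R^2\mu^{-2}\max\{1/\sqrt{\varsigma},\sqrt{m/\gamma_0}\}$. The paper's derivation is notable because the precise form of the test value $Z_i$ falls out automatically from the Gaussian-tail integration, which motivates the definition of preferability; your derivation is more elementary (no symmetrization lemma, no Gaussian integral), and it also reveals that preferability is robust enough to control the distinct quantity $\sum_i\exp(-a/\Omega^2(\by_i))$ for $a\gtrsim\sqrt{m/\gamma_0}$, at the cost of a combinatorial decomposition argument and a large (but absolute) constant $\Lambda$. I verified in particular that $T(Q)/\sqrt{c}$ is indeed uniformly $O(1)$ in $m$ and $\gamma_0$ under \eqref{spL:est3}, that the shell series converges for $\Lambda > 2$, and that taking $a' \ge \Lambda c$ only helps by monotonicity of $\exp(-a'X_i)$ in $a'$, so the max in your final bound is handled correctly.
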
 

\begin{proof}
We will find $D$ using the empirical method of Maurey. First, we observe 
that $\cE_{s,\alpha} \subset (1+ \alpha){\sqrt{s}}B_{1}$ (see Lemma \ref{lemma:subset}), hence if we denote {$\mathcal{P} = \{\pm \bm{e}_j (1+\alpha) \sqrt{s}\}_{1\leq j\leq N}$}, 
where $(\bm{e}_j)$ are canonical unit vectors in $\R^N$, we have 
$\cE_{s,\alpha}  \subset \text{conv}(\mathcal{P})  $. 

Fix a ${\bm z}\in \cE_{s,\alpha}$, $\bz$ can be represented as ${\bm z} = \sum_{r=1}^{2N} \lambda_r \bm{v}_r$, for some $\lambda_r \ge 0$ such that $\sum_{r=1}^{2N} \lambda_r = 1$ and ${\bm v}_r$ listing $2N$ elements of $\mathcal{P}$.  
There exists a probability measure $\lambda$ on $\mathcal{P}$ that takes the 
values ${\bm v}_r \in \mathcal{P}$ with probability $\lambda_r$. Let $\bz_1,\ldots,\bz_M$ be i.i.d random variables with law $\lambda$. Note 
that $\E\bz_k = \bz$, for all $k =1,\dots, M$. For each $\by \in \mathcal{U}$, $\psi(\by,\bm{z}_k)$ 
is also a real-valued random variable on probability space 
$(\psi(\by,\mathcal{P}),\lambda)$ with 
$$
|\psi(\by,\bm{z}_k)| \le \Omega(\by)(1+\alpha)\sqrt{s},
\quad\quad\mbox{and}\quad\quad
\E\psi(\by,\bm{z}_k) = \psi(\by,\bm{z}). 
$$
Denote $\overline\bz =  \frac1M \sum_{k=1}^M \bz_k$. Let $D$ be 
the set of all possible outcomes of $\overline\bz $ and $\overline\lambda$ be the probability measure on $D$ according to $\overline\bz$. Note that $\#(D) \le (2N)^M$, we derive a sufficient condition on $M$ so that $D$ satisfies the assertion (i). 
 
First, let us define a characteristic function $\chi$ on $(\cU\times D,\varrho\otimes {\overline\lambda})$, given by  
\begin{align*}
\chi(\by,\overline\bz) = 
\begin{cases}
1,\qquad \mbox{if }\left|  \psi(\by,\overline{\bm{z}}-\bm{z}) \right| \ge \mu, \\ 
0,\qquad \mbox{if }\left|  \psi(\by,\overline{\bm{z}}-\bm{z}) \right| < \mu. 
\end{cases}
\end{align*}
Applying Lemma 
\ref{lemma:hoeffding} yields for all $\by \in \cU$,
$$
\int_D \chi(\by,\overline\bz)d\overline{\lambda} =  \P_{\overline\bz}\(\left|  \psi(\by,\overline{\bm{z}}-\bm{z}) \right| \ge \mu \) 
\le 2 \exp\({-\frac {M \mu^2}{ 2(1+\alpha)^2 \Omega^2(\by)  s}}\).
$$
From Lemma \ref{lemma:imp_integral1}, we have 
\begin{align}
\label{est7b}
 \int_D \left( \int_\cU \chi(\by,\overline\bz) d\varrho \right) d\overline{\lambda} =  \int_\cU \left( \int_D \chi(\by,\overline\bz)d\overline{\lambda} \right) d\varrho
\le   \frac{  C (1+\alpha)^4 s^2}{M^2\mu^4}. 
\end{align}
which yields that with probability exceeding 2/3, $\overline\bz\in D$ satisfying
\begin{align}
\label{est8}
 \P_{\by}\(\left| \psi(\by,\overline\bz - \bm{z}) \right| \ge \mu \) = 
 \int_\cU \chi(\by,\overline\bz) d\varrho   \le  \frac{3 C (1+\alpha)^4  s^2}{M^2\mu^4} \le \varsigma,
\end{align}
provided that $M \ge C (1+\alpha)^2 \dfrac{ s}{\mu^2 \sqrt{\varsigma}}$.

To deal with the set $Q = \{\by_1,\dots,\by_m\}$, we develop a different argument based on extending the approach in \cite[Lemma 12.37]{FouRau13}. First, by symmetrization, 
$$
\E_{\overline\bz}\(\max_{\by_i\in Q} |\psi(\by_i,\bz-\overline\bz )| \) \le \frac{2}{M} \E_{\overline\bz} \E_\epsilon \max_{\by_i\in Q} \left| \sum_{k=1}^M \epsilon_k  \psi(\by_i,\bz_k)  \right|, 
$$
where $\bm{\epsilon}$ is a Rademacher sequence independent of $(\bz_1,\ldots,\bz_M)$.
Since $|\psi(\by_i,\bz_k)|\le \Omega(\by_i)(1+\alpha) \sqrt{s}$ for all $\by_i\in Q,\, k\in[M]$, we have $\|(\psi(\by_i,\bz_k))_{k=1}^M\|_2$ $ \le \Omega(\by_i)(1+\alpha) \sqrt{Ms}$. Standard calculations yield
$$
\P_\epsilon\left(\left| \sum_{k=1}^M \epsilon_k  \psi(\by_i,\bz_k)  \right| \ge t \right) \le 2\exp\(-\frac{t^2}{ 2(1+\alpha)^2  \Omega^2(\by_i){Ms}}\),\quad \forall t>0. 
$$
By the union bound, 
$$
\P_\epsilon\left(\max_{\by_i\in Q}\left| \sum_{k=1}^M \epsilon_k  \psi(\by_i,\bz_k)  \right| \ge t \right) \le \sum_{i=1}^m 2 \exp\(-\frac{t^2}{ 2 (1+\alpha)^2  \Omega^2(\by_i){Ms}}\),\quad \forall t>0. 
$$
We have for every $\kappa > 0 $
\begin{align}
& \E_\epsilon \max_{\by_i\in Q} \left| \sum_{k=1}^M \epsilon_k  \psi(\by_i,\bz_k)  \right| = \int_{0}^{\infty} \P_\epsilon\left(\max_{\by_i\in Q}\left| \sum_{k=1}^M \epsilon_k  \psi(\by_i,\bz_k)  \right| \ge t \right) dt \notag
\\
\le &  \int_0^{\kappa} 1 \ dt + 2 \sum_{i=1}^m \int_{\kappa}^{\infty}  \exp\(-\frac{t^2}{ 2 (1+\alpha)^2   \Omega^2(\by_i){Ms}}\) dt \notag
\\
 \le & \,  \kappa \, + \,  \sqrt{2 \pi Ms} (1 + \alpha) \sum_{i=1}^m { \Omega(\by_i)} \exp\(-\frac{\kappa^2}{2 (1+\alpha)^2  \Omega^2(\by_i){Ms}}\). \label{spL:est1}
\end{align}
Choosing $\kappa =  (1+\alpha) \sqrt{Ms} \sqrt[4]{{m}/{\gamma_0}} $ leads to
\begin{gather}
 \label{spL:est1b}
\begin{aligned}
& \E_\epsilon \max_{\by_i\in Q} \left| \sum_{k=1}^M \epsilon_k  \psi(\by_i,\bz_k)  \right|  \le  \,  (1+\alpha) \sqrt{Ms} \sqrt[4]{\dfrac{m}{\gamma_0}} \, 
\\
& \qquad\qquad\qquad\qquad  + \,   \sqrt{2 \pi Ms} (1+ \alpha) \sum_{i=1}^m { \Omega(\by_i)} \exp\(-\frac{1}{2 \Omega^2(\by_i)}  \sqrt{\dfrac{m}{\gamma_0}} \).
\end{aligned}
\end{gather}
Since $Q$ is a preferable sample set, we obtain from \eqref{spL:est3}
\begin{align*}
\E_\epsilon \max_{\by_i\in Q} \left| \sum_{k=1}^M \epsilon_k  \psi(\by_i,\bz_k)  \right| & \le C (1+ \alpha) \sqrt{Ms} ({m}^{\frac14}{\gamma_0}^{-\frac{1}{4}} + m^{\frac14 }\gamma_0^{\frac34}) \le  C (1+ \alpha) \sqrt{Ms}  \sqrt[4]{\dfrac{m}{\gamma_0}}.
\end{align*}
There follows $\E_{\overline\bz}\(\max_{\by_i\in Q} |\psi(\by_i,\bz - \overline\bz)| \) \le  C (1+ \alpha) \sqrt{\dfrac{s}{M}}  \sqrt[4]{\dfrac{m}{\gamma_0}}$, yielding with probability exceeding $2/3$, $\overline \bz \in D$ satisfies
\begin{align}
\label{spL:est4}
\max_{\by_i\in Q} |\psi(\by_i,\bz - \overline\bz)| \le   3C (1+ \alpha) {\sqrt{\frac{s}{M}}} {\sqrt[4]{\frac{m}{\gamma_0}}} \le \mu,
\end{align}
provided that $M \ge  \dfrac{C (1+\alpha)^2 s\sqrt{m}}{\mu^2\sqrt{\gamma_0}}$. Thus, for $M \ge \dfrac{C(1+\alpha)^2 s}{\mu^2} \max\left\{\dfrac{1}{ \sqrt{\varsigma }}, \dfrac{\sqrt{m}}{\sqrt{\gamma_0}}\right\}$, there exists $\overline{\bz}\in D$ that fulfills both \eqref{est8} and \eqref{spL:est4}. As $\log(\#(D))\le M\log(2N)$, the proof is concluded. 
\end{proof}

\subsection{Proof of Theorem \ref{spL:WREP_theorem}}
\label{subsec:mainproof_1d}

It is enough to prove \eqref{note:problem1} for all $\bz\in\cE_{s,\alpha} $ (i.e., $\partial B_2 \cap C(s;\alpha)$). We conduct the analysis under assumption $\delta < 1/13$ for convenience; this parameter will be rescaled at the end. For $m\in \mathbb{N}$, let $Q = \{\by_1,\dots,\by_m\}$ be a set of samples where $\by_1,\ldots,\by_m$ are drawn independently from the probability space $(\cU,\varrho)$. From Lemma \ref{lemma:good_set}, $Q$ is preferable according to $\gamma_0$ with probability exceeding $1-\gamma_0$. We define the set of integers 
\begin{align} 
\cL =  \Z\cap \(\frac{\log(\delta)}{\log(1+\delta)}+1, 
\dfrac{\log(\Xi)}{\log(1+\delta)}+1\),\ \mbox{ where }\Xi =  \frac{4(1+\alpha)^2 s }{\sqrt{3 \pi \delta}},
\label{defineIndices}
\end{align} 
and denote by $ \underline l, \overline l$ the minimum and maximum of 
$\cL$ respectively. $\underline l$ 
and $\overline l$ then satisfy 
\begin{align} 
(1+\delta)^{\underline l -2} \leq \delta 
\quad\mbox{and}\quad
(1+\delta)^{\overline l -1} \leq \Xi \leq (1+\delta)^{\overline l }. 
\label{maxmin}
\end{align} 
  
\textbf{Step 1:} under the condition that $Q$ is a preferable sample set according to $\gamma_0$ (pref. a.t. $\gamma_0$), for $0<\varsigma <1$ (whose exact value will be set later), we seek to construct $\tilde{\psi}$ approximating $\psi$ such that
\begin{enumerate}
\item For all $\bm{z} \in \cE_{s,\alpha} $, the following holds with probability exceeding $1 - \varsigma$ in $(\cU,\varrho)$ and \textit{for all} $\by \in Q$
\begin{gather}
\begin{aligned} 
&\!\! \left(1-{3\delta}/{2}\right)\tilde{\psi}(\by,\bm{z})  < |{\psi}(\by,\bm{z})| <  
\left(1+ {3\delta}/{2}\right)\tilde{\psi}(\by,\bm{z}) , &\mbox{ if }\tilde{\psi}(\by,\bm{z}) > 0,
\\ 
&\!\! \ |{\psi}(\by,\bm{z})| < 6\delta/5 \ \ \mbox{ or }\ \ |{\psi}(\by,\bm{z})| > \Xi, &\mbox{ if }\tilde{\psi}(\by,\bm{z}) = 0.
\end{aligned}
\label{note:eq1b}
\end{gather}
\item For all ${\bm z}\in \cE_{s,\alpha}$, there exists a pairwise disjoint family of subsets $(I_l)_{l\in \mathcal{L}} = (I^{({\bm z},Q)}_l)_{l\in \mathcal{L}}$ of $\cU$ depending on $\bm{z}$ and $Q$ such that 
\begin{align}
\label{note:eq2}
\tilde{\psi}(\cdot,{\bm z}) = \sum_{l\in\mathcal{L}} (1+\delta)^{l} \chi_{I_l}. 
\end{align}
\item For every $l \in \mathcal{L}$, $(I_l^{({\bm z}, Q)})_{\substack {{\bm z}\in \cE_{s,\alpha} \\  Q\mbox{ {\scriptsize pref. a.t. $\gamma_0$} }} }$ belongs to a finite class $F_l$ of subsets of $\cU$ satisfying 
\begin{align}
\label{note:eq2b}
\log(\#F_l) \le C\frac{ (1+\alpha)^2 s}{\delta^3 (1 + \delta)^{2l-2} } \log(2N) \max\left\{{\sqrt{ \dfrac{ \log( \Xi /\delta)} {\varsigma \log(1+\delta)} }},\sqrt{\frac{m}{\gamma_0}}\right\}. 
\end{align}
\end{enumerate}

First, for $l\in \mathcal{L}$, let $D_l$ be a finite subset of $\mathbb{R}^N$ determined as in Lemma \ref{note:lemma4} with given $m,\, \gamma_0$, as well as $\mu =  {{\delta}(1+\delta)^{l-1}}/{2}$ and $0 < \varsigma'< 1$ (to be set accordingly to meet our needs). We have 
\begin{align}
\log(\# D_l ) & \le C\frac{ (1+\alpha)^2 s}{\delta^2 (1 + \delta)^{2l-2} } \log(2N) \max\left\{\frac{1}{\sqrt{ \varsigma' }},\sqrt{\frac{m}{\gamma_0}}\right\} .
\label{note:cardD}
\end{align}
For a fixed ${\bm z}\in \cE_{s,\alpha}$, there exist $\bm{z}_l \in D_l$ and a measurable set $\cU_l \subset \cU$ with $\varrho(\cU_l) \ge 1-\varsigma'$  such that 
\begin{align*}
|\psi(\by, \bm{z} - \bm{z}_l)| &\le  {{\delta}(1+\delta)^{l-1}}/{2},\quad \forall \by \in \cU_l, 
\\
|\psi(\by_i, \bm{z} - \bm{z}_l)| &\le  {{\delta}(1+\delta)^{l-1}}/{2},\quad \forall \by_i \in Q. 
\end{align*} 
Without loss of generality, we can assume $Q \subset \cU_l$. We construct a pairwise disjoint family of subsets $(I_l)_{l\in \cL}$ and mapping $\tilde{\psi}(\cdot,\bm{z}):\mathcal{U} \to \mathbb{R}$ which depend on $\bz$ and $Q$, inductively for the integers 
$\overline l > \dots > \underline l$ according to 
\begin{gather}
\begin{aligned}
& I'_l  = \{\by \in \cU: (1+\delta)^{l-1} < |\psi(\by,{\bm z}_l)| < (1+\delta)^{l+1}\}, 
\\
& I_l  = I'_l \,  {\setminus} \, \bigcup_{r>l} I'_{r},
\\
& \tilde{\psi}(\cdot,{\bm z}) = \sum_{l\in \mathcal{L}} (1+\delta)^{l} \chi_{I_l}. 
\end{aligned}
\label{constructionPsiZ}
\end{gather}

We proceed to prove that $\tilde{\psi}$ satisfies \eqref{note:eq1b}--\eqref{note:eq2b}, following closely the argument in \cite[Theorem 2.2]{ChkifaDexterTranWebster15}.  First, consider $\by\in \bigcap_{l\in \cL} \cU_l$. If $\by\in I_l$ for some $l\in \cL$, then 
\begin{align*}
\tilde{\psi}(\by,{\bm z}) = (1+\delta)^{l} > 0 \mbox{ and }(1+\delta)^{l-1} < |\psi(\by,\bm{z}_l)| < (1+\delta)^{l+1}.
\end{align*}
Since $| |\psi(\by,{\bm z})| - |\psi(\by,{\bm z}_l)| | \le |\psi(\by,\bm{z}) - \psi(\by,\bm{z}_l)| \le {\delta}(1+\delta)^{l-1}/2$, we have 
\begin{align*}
|\psi(\by,{\bm z})| & < (1+\delta)^{l+1} + \frac{\delta}{2}(1+\delta)^{l-1}   < \left(1+\frac{3}{2}\delta\right)\tilde{\psi}(\by,{\bm z}), 
\\
\text{ and }|\psi(\by,{\bm z})| & > (1+\delta)^{l-1} - \frac{\delta}{2}(1+\delta)^{l-1} > \left(1-\frac{3\delta}{2}\right)\tilde{\psi}(\by,{\bm z}). 
\end{align*}
If $ \by \notin \bigcup_{l\in \cL} I_l$, then $\tilde{\psi}(\by,\bm{z}) = 0$ and for every $l\in \mathcal{L}$, 
\begin{align*}
  |\psi(\by,\bm{z}_l)| \notin ((1+\delta)^{l-1},(1+\delta)^{l+1}). 
\end{align*}
With notice that $| |\psi(\by,{\bm z})| - |\psi(\by,{\bm z}_l)| | < {\delta}(1+\delta)^{l-1} /2 $, there follows
\begin{align*}
|\psi(\by,{\bm z})| \notin \bigcup_{l\in \mathcal{L}}\left((1+\frac{\delta}{2})(1+\delta)^{l-1} , (1+\frac{3\delta}{2} + \delta^2)(1+\delta)^{l-1}  \right).
\end{align*}
Observe that $(1+\frac{3\delta}{2} + \delta^2)(1+\delta)^{l-1}  > (1+\frac{\delta}{2})(1+\delta)^{l}$, the previous intervals intersect for any two consecutive values of $l$. We infer
\begin{align*}
|\psi(\by,{\bm z})| \le (1+\frac{\delta}{2})(1+\delta)^{\underline{l}-1},\ \mbox{ or }\ |\psi(\by,{\bm z})| \ge (1+\frac{3\delta}{2} + \delta^2)(1+\delta)^{\overline{l}-1}. 
\end{align*}
It implies by \eqref{maxmin} and assumption $\delta< 1/13$ that $|\psi(\by,\bz)| \leq \delta(1+\delta/2)(1+\delta) < 6\delta/5$ (if the first inequality occurs) or $|\psi(\by,\bz)| > \Xi $ (if the second inequality occurs). We complete this case by emphasizing $Q \subset \bigcap_{l\in \cL} \cU_l $. 

Next, consider $\by\notin \bigcap_{l\in \cL} \cU_l$, \eqref{note:eq1b} is not guaranteed. However, this only holds with probability not exceeding
\begin{gather}
\begin{aligned}
 \varrho\biggl(\cU \setminus  \bigcap_{l\in \mathcal{L}} \cU_l \biggl) \le \sum_{l\in \cL}\varrho(\cU\setminus  \cU_l) & \le \varsigma' (\#\mathcal{L}) \le  \frac{ \log( \Xi / \delta)}{\log(1+\delta)} \varsigma' = \varsigma,
\end{aligned}
 \label{note:badset}
\end{gather}
for the last equality we set $\varsigma' =  \dfrac{\log(1+\delta)}{ \log( \Xi / \delta)} \varsigma$. 

To summarize, we partitioned $\cU$ into three sets 
\vspace{.2cm}

\qquad $I := \left(\bigcap_{l\in \cL} \cU_l \right) \bigcap \left(\bigcup_{l\in \cL} I_l\right),\ \ \widehat{I} := \left(\bigcap_{l\in \cL} \cU_l \right) \setminus \left(\bigcup_{l\in \cL} I_l\right),$ 
\vspace{.1cm}

\qquad $\cU' := \cU\setminus \left(\bigcap_{l\in \cL} \cU_l\right), $
\vspace{.2cm}
\\
and constructed $\tilde \psi(\cdot, \bz)$ depending on $\bz$ and $Q$ approximating $\psi(\cdot,\bz)$ satisfying 
\begin{align} 
&\left(1-{3\delta}/{2}\right)\tilde{\psi}(\by,\bm{z})  < |{\psi}(\by,\bm{z})| <  \left(1+ {3\delta}/{2}\right)\tilde{\psi}(\by,\bm{z}) ,\ & \forall \by \in I, \label{note:partition1}
\\
& (0\le |{\psi}(\by,\bm{z})| < 6\delta/5\ \vee \ |\psi(\by,\bz)| > \Xi) \ \ \mbox{ and }\ \ \tilde{\psi}(\by,\bm{z}) = 0 ,\ & \forall \by \in \widehat{I}, \label{note:partition2}
\\
& \varrho(\cU' )\le \varsigma,\qquad\qquad Q\subset I\cap \widehat{I},\mbox{ i.e., }Q\cap \cU' = \emptyset. &  \label{note:partition3}
\end{align}
We further divide $\widehat{I}$ into two subsets $\underline{I}$ and $\overline{I}$ according to 
\begin{align}
\underline{I}:= \{\by \in \widehat{I}:  |{\psi}(\by,\bm{z})| < 6\delta/5\}, \qquad \overline{I} := \{\by \in \widehat{I}:  |{\psi}(\by,\bm{z})| > \Xi \},  \label{note:partition4}
\end{align}
as they will need different treatments. Note that for all $\by \in \overline{I}$, $|\Omega(\by)| \ge \dfrac{|\psi(\by,\bz)|}{(1+\alpha)\sqrt{s}} > \dfrac{\Xi}{(1+\alpha)\sqrt{s}}$, thus by Lemma \ref{lemma:prob_tail}, $\varrho(\overline{I}) \le \dfrac{16(1+\alpha)^4 s^2}{\pi^2 \Xi^4} = \dfrac{9 \delta^2}{16(1+\alpha)^4 s^2}$.

It remains to verify \eqref{note:eq2b}. For any $l\in \mathcal{L}$, $\#\{I'^{({\bm z},Q)}_l\, |\, {\bm z}\in \cE_{s,\alpha},\, Q \mbox{ is preferable}\} \le \#D_l$ and 
$\# F_l \le \prod_{r \ge l} \# D_{r}$. From \eqref{note:cardD}, it gives  
\begin{align*}
\log(\# F_l)  &\le \sum_{r\ge l} \log(\# D_{r}) \le  C \frac{(1+\alpha)^2 s}{\delta^3 (1 + \delta)^{2l-2} } \log(2N) \max\left\{{\sqrt{ \dfrac{ \log( \Xi / \delta)} {\varsigma \log(1+\delta)}  }},\sqrt{\frac{m}{\gamma_0}}\right\} .
\end{align*}

\textbf{Step 2:} We derive estimates of $\|\bA \bz\|_2$ and $\| \bz\|_2$ in terms of $\tilde{\psi}(\cdot,\bz)$. The following bounds will be useful for this task. First, from \eqref{note:partition1}, 
\begin{align}
  (1- 3\delta)|\tilde{\psi}(\by,{\bm z})|^2 <  |{\psi}(\by,{\bm z})|^2 <  (1+ 4\delta)|\tilde{\psi}(\by,{\bm z})|^2,\ \forall \by\in I. \label{spL:est6}
\end{align}
\eqref{spL:est6} gives $ |{\tilde \psi}(\by,{\bm z})|^2 <  (1+ 4\delta)|{\psi}(\by,{\bm z})|^2$, which implies 
\begin{align}
\label{spL:est12}
\int_{I} |\tilde \psi(\by,\bz)|^2 d\varrho \le (1+4\delta) \int_{I} | \psi(\by,\bz)|^2 d\varrho \le  (1+4\delta) \int_{\cU} | \psi(\by,\bz)|^2 d\varrho  = 1+ 4\delta.
\end{align}

A lower bound of $\|\bA \bz\|_2$ in terms of $\tilde{\psi}(\cdot,\bz)$ is straightforward. Since $Q \cap \cU' = \emptyset$ and $\tilde{\psi}(\by,\bz) = 0,\, \forall \by\in \widehat{I}$, we have 
\begin{align}
\|\bm{Az}\|^2_2 & = \frac{1}{m}\sum_{i=1}^m |\psi(\by_i,{\bm z})|^2 \ge {(1-{3\delta})} \sum_{i=1}^m \frac {| \tilde{\psi}(\by_i,{\bm z})|^2}{m} . \label{spL:est7}
\end{align}
It is worth mentioning that to obtain a reasonable upper estimate of $\|\bA \bz\|_2$ in similar manner is not easy, due to the lack of uniform bound for the considered orthonormal systems. However, unlike RIP, upper estimate of $\|\bA \bz\|_2$ is not needed for restricted eigenvalue property. 

For $\|\bz\|_2$, we decompose
\begin{align}
\label{spL:est8}
\|\bz\|_2^2 = \int_{I}\! |\psi(\by,\bz)|^2 d\varrho + \int_{\underline{I}}\! |\psi(\by,\bz)|^2 d\varrho + \int_{\overline{I}} \! |\psi(\by,\bz)|^2 d\varrho + \int_{\cU'} |\psi(\by,\bz)|^2 d\varrho,
\end{align}
and seek to produce upper bounds for each term in the RHS of \eqref{spL:est8}. To begin with, by \eqref{spL:est6},
\begin{align}
\label{spL:est9}
 \int_{I} |\psi(\by,\bz)|^2 d\varrho \le (1+4\delta) \int_{I} |\tilde{\psi}(\by,\bz)|^2 d\varrho.
\end{align}
By \eqref{note:partition4} and Lemma \ref{lemma:end_set}, note that $\varrho(\underline{I})\le 1$ and $\varrho(\overline{I})\le \dfrac{9\delta^2}{16(1+\alpha)^4 s^2}$, it gives 
\begin{gather}
 \label{spL:est10}
\begin{aligned}
&\int_{\underline{I}} |\psi(\by,\bz)|^2 d\varrho \le (6\delta/5)^2 \varrho(\underline{I}) \le \delta/6, 
\\
& \int_{\overline{I}} |\psi(\by,\bz)|^2 d\varrho \le (1+\alpha)^2 s  \int_{\overline{I}} |\Omega(\by)|^2 d\varrho  \le \frac{3 \delta}{2}. 
\end{aligned}
\end{gather}
Similarly, setting $\varsigma = \dfrac{\delta^2} {36 (1+ \alpha)^4 s^2}$, we have 
\begin{align}
\label{spL:est11}
\int_{\cU'} |\psi(\by,\bz)|^2 d\varrho  \le (1+\alpha)^2 s  \int_{\cU'} |\Omega(\by)|^2 d\varrho < \frac{\delta}{3}.
\end{align}
We combine \eqref{spL:est8}-\eqref{spL:est11} to get 
$$
\|\bz\|_2^2 <   2\delta + (1+4\delta) \int_{I} |\tilde{\psi}(\by,\bz)|^2 d\varrho, 
$$
which in reference to \eqref{spL:est12} and \eqref{spL:est7} implies 
\begin{gather}
\label{spL:est13}
\begin{aligned}
&\|\bz\|_2^2 - \|\bA \bz\|_2^2  < 2\delta +  (1+4\delta) \int_{I} |\tilde{\psi}(\by,\bz)|^2 d\varrho -  {(1-{3\delta})} \sum_{i=1}^m \frac {| \tilde{\psi}(\by_i,{\bm z})|^2}{m}
\\
\le & \ 2\delta + 7\delta  \int_{I} |\tilde \psi(\by,\bz)|^2 d\varrho  + (1 -  3\delta) \! \left( 
 \int_{\cU}\! |\tilde \psi(\by,\bz)|^2 d\varrho - \sum_{i=1}^m \frac{| \tilde{\psi}(\by_i,{\bm z})|^2}{m} \right)   
 \\
 \le &  \ \frac{34\delta}{3} +  (1 -  3\delta) \! \left( 
 \int_{\cU}\! |\tilde \psi(\by,\bz)|^2 d\varrho - \sum_{i=1}^m \frac{| \tilde{\psi}(\by_i,{\bm z})|^2}{m} \right).   
\end{aligned}
\end{gather}

\textbf{Step 3:} we derive a positive upper bound of $ \|\bz\|_2^2 - \|\bA\bz\|_2^2 $ via \eqref{spL:est13}, by employing the tail estimate in Lemma \ref{lemma:tailbound} and union bound. From the definition of $\tilde \psi$, it is easy to see that
\begin{align}
\label{note:comp4}
\int_{\cU} |\tilde \psi(\by,\bz)|^2 d\varrho
-
 \sum_{i=1}^m \frac{| \tilde{\psi}(\by_i,{\bm z})|^2}{m} 
=
\sum_{l\in\cL}  
(1+\delta)^{2l}\!
\left(
\varrho(I_l)
-
\frac{\#(Q\cap I_l)}m
\right).
\end{align}
Let $(\kappa_l)_{l\in\mathcal{L}}$ be a sequence of positive numbers. For any set $\Delta$ in the class $F_l$, for $Q$ being a set of samples $\by_1,\ldots,\by_m$ drawn independently from the probability space $(\cU,\varrho)$ (not necessarily preferable), applying Lemma \ref{lemma:tailbound} yields with probability of $Q$ exceeding $1-\exp\left(- \frac{m \kappa_l \delta}{16e}\right)$,
\begin{align}
  \varrho (\Delta) -  \frac{ \#( Q\cap \Delta)}{m}  \le  \,\delta \varrho (\Delta)  +  \kappa_l. \label{note:eq6}
\end{align}
By the union bound, with probability exceeding $1 - \sum_{l\in \mathcal{L}} \exp\left(- \frac{m \kappa_l \delta}{16e}\right) (\# F_l) $, the previous inequality holds uniformly for all sets $\Delta \in \cup_{l\in \cL} F_l$. Therefore, with probability exceeding $1 - \gamma_0 - \sum_{l\in \mathcal{L}} \exp\left(- \frac{m \kappa_l \delta}{16e}\right) (\# F_l) $, $Q$ is preferable according to $\gamma_0$ \textit{and} \eqref{note:eq6} holds with $Q$ uniformly for $\Delta \in \cup_{l\in \cL} F_l$. In this scenario, we can apply \eqref{note:eq6} with $\Delta = I^{(\bz,Q)}_l$ ($l\in \cL$) to the sum in \eqref{note:comp4} and combine with \eqref{spL:est13} to infer that for all $\bz\in \cE_{s,\alpha}$, 
\begin{gather}
\label{spL:est14}
\begin{aligned}
\|\bz\|_2^2 -  \|\bA\bz\|_2^2 
 &<  
\frac{34\delta}{3} + (1-3\delta)\sum_{l\in\cL}  
(1+\delta)^{2l}\!
\left(
\delta \varrho (I_l)  +  \kappa_l
\right)
\\
& = \frac{34\delta}{3} + \delta (1- 3\delta)\int_{\cU} |\tilde \psi(\by,\bz)|^2 d\varrho 
+ 
 (1-3\delta)\sum_{l\in\cL}  
(1+\delta)^{2l}
  \kappa_l
 .
\end{aligned}
\end{gather}
Note that $\tilde\psi(\by,\bz) = 0,\, \forall \by\in \widehat{I}$ and $\tilde\psi(\by,\bz) \le (1+\delta)^{\overline{l}} \le (1+\delta)\Xi,\, \forall \by\in \cU$, we estimate 
\begin{align*}
& \int_{\cU} |\tilde \psi(\by,\bz)|^2 d\varrho = \int_{I} |\tilde \psi(\by,\bz)|^2 d\varrho + \int_{\cU'} |\tilde \psi(\by,\bz)|^2 d\varrho 
\\
 \le\ & (1+4\delta) \int_{I} | \psi(\by,\bz)|^2 d\varrho + \varrho(\cU') (1 + \delta)^2 \Xi^2 < 1+ 5\delta
\end{align*}
and obtain 
\begin{align*}
\|\bz\|_2^2 -  \|\bA\bz\|_2^2 < 
\frac{25\delta}{2} +  
 (1-3\delta)\sum_{l\in\cL}  
(1+\delta)^{2l}
  \kappa_l.
 \end{align*}

Finally, in order to obtain Theorem \ref{spL:WREP_theorem}, we 
need to assign appropriate values for $\kappa_l$ and derive 
conditions on $m$ such that 
\begin{align*}
\sum_{l\in\cL}  
(1+\delta)^{2l}
\kappa_l  \le \delta/2,
\quad\quad\mbox{and}\quad\quad
\sum_{l\in \cL}\exp\(- \frac{m\kappa_l \delta }{16 e} + \log(\# F_l)\)\le \gamma.
\end{align*}
The two inequalities can be fulfilled if the numbers 
$\kappa_l$ and the integer $m$ are chosen as follows  
\begin{align*}
\kappa_l :=\frac{\delta/2}{(\#\cL) (1+\delta)^{2l}},\quad\quad
- \frac{m\kappa_l \delta }{16 e} + \log(\# F_l) 
\le \log\left(\frac{\gamma}{\#\cL}\right),\quad \quad  l \in \cL.
\end{align*}
This implies that 
\begin{align*}
m & \ge  32 e~(\#\cL) \frac{(1+\delta)^{2l}}{\delta^2} \left[ \log(\# F_l)  
+ \log\left(\frac{\#\cL}{\gamma}\right)\right],\ \quad\quad l\in \cL. 
\end{align*}
We have in view 
of \eqref{note:eq2b} and assumption $\delta < 1/13$ that

\begin{align*}
&32 e\,  (\#\cL) \frac{(1+\delta)^{2l}}{\delta^2} \log(\# F_l)  
\\
& \qquad \le C \frac{ \log( \Xi / \delta)}{\log(1+\delta)} \cdot \frac{(1+\delta)^{2l}}{\delta^2} \cdot \frac{(1+\alpha)^2 s}{\delta^3 (1 + \delta)^{2l-2} } \log(2N) \max\left\{{\sqrt{ \dfrac{ \log( \Xi / \delta)} {\varsigma \log(1+\delta)}  }},\sqrt{\frac{m}{\gamma_0}}\right\}
\\
&  \qquad \le  C  \delta^{-6} \,  (1+\alpha)^2{ s} \,{ \log( \Xi / \delta)}  \log(2N) \max\left\{  (1+\alpha)^2 s{\sqrt{ \dfrac{ \log( \Xi / \delta)} {\delta^3}  }}  ,\sqrt{\frac{m}{\gamma_0}}\right\} , 
\\
&32 e\,  (\#\cL) \frac{(1+\delta)^{2l}}{\delta^2}  \log\left(\frac{\#\cL}{\gamma}\right) \le   C \frac{ \log( \Xi / \delta)}{\log(1+\delta)} \cdot  \frac{(1+\delta)^{2} \Xi^2 }{\delta^2}   \cdot   \log\left( \frac{ \log( \Xi / \delta)}{\gamma \log(1+\delta)}\right) 
 \\
&\qquad  \le C  \delta^{-4 }{{(1+\alpha)^4 s^2 }} \, { \log( \Xi / \delta) }\log\left( \frac{ \log( \Xi / \delta)}{\gamma \delta } \right). 
\end{align*} 
Combining the two estimates shows that $m$ in \eqref{spL:complexity} guarantees 
$$
\|\bA \bz\|_2^2 > (1 - 13\delta)\|\bz\|_{2}^2, \ \ \forall \bm{z}\in \cE_{s,\alpha},
$$
with probability exceeding $1-(\gamma + \gamma_0)$. Rescaling $\delta$ concludes the proof. $\square$

\subsection{Proof of Proposition \ref{spL:WREP_theorem_pref_set}}
\label{sec:1d_prefset}
The proof follows closely that of Theorem \ref{spL:WREP_theorem}, with one key modification in Step 3. First, we note that Steps 1 and 2, in particular \eqref{spL:est13} and \eqref{note:comp4}, were established under the condition that $Q$ is preferable according to $\gamma_0$, which holds with probability exceeding $1-\gamma_0$. Independently, for $Q$ being a set of $m$ samples drawn uniformly from $[-1,1]$ (not necessarily preferable), with probability exceeding $1-\gamma$, \eqref{note:eq6} holds uniformly for all sets $\Delta \in \cup_{l\in \cL} F_l$, assuming $m$ satisfying $ \sum\limits_{l\in \mathcal{L}} \exp\left(- \dfrac{m \kappa_l \delta}{16e}\right) (\# F_l)\le \gamma$. Hence, given $Q$ preferable according to $\gamma_0$, \eqref{note:eq6} holds uniformly for all sets $\Delta \in \cup_{l\in \cL} F_l$ with probability exceeding $1 - \dfrac{\gamma}{1-\gamma_0}$. 
%
%
With \eqref{spL:est13}--\eqref{note:eq6} in hand, proceeding as in Section \ref{subsec:mainproof_1d}, we can derive that under the condition \eqref{spL:complexity}, with probability exceeding $1 - \dfrac{\gamma}{1-\gamma_0}$, 
\begin{align}
  \|\bm{Az}\|_2^2 > (1 -  \delta) \|\bz\|^2_2,\ \ \forall \bm{z}\in {C}(s;\alpha). \label{note:problem1}
\end{align}
By a change of variable, the proposition is concluded.  $\square$

\section{Restricted eigenvalue property for multivariate Legendre systems}
\label{sec:mult}We present an estimate for restricted eigenvalue property for multi-dimensional Legendre matrices. The overall strategy in this section is very similar to those in Section \ref{subsec:main_1d}, but complete analysis involves intensive multivariate calculations. We will focus on such calculations and skip repetitive details whenever possible. Let $\cU := [-1,1]^d$, $\varrho$ be the uniform probability measure on $\cU$, $\cJ$ be a subset of $\mathbb{N}^d$, and $\{\Psi_{j}\}_{{j}\in \cJ} \equiv \{L_{j}\}_{{j}\in \cJ}$, the system of multivariate Legendre polynomials orthonormal with respect to $\varrho$. We again denote
\begin{align*}
\psi(\by,\bz) &:= \sum_{j \in \cJ} {z}_{j} L_{j}(\by)  ,\quad\quad \by\in \mathcal{U},\;\; \bz \in \R^N.
\end{align*}
From the envelope bound of univariate Legendre polynomials, we have 
\begin{align}
\label{envelope}
|L_{j}(\by)| \le \frac{{2^{d}}}{\pi^{d/2}\prod_{k=1}^d{(1-y_k^2)^{1/4}}},  
\end{align}
for all $\by = (y_1,\ldots,y_d) \in (-1,1)^d,\ j \in \mathbb{N}^d$. Therefore, we define 
$$
\Omega (\by) := \frac{{2^{d}}}{\pi^{d/2}\prod_{k=1}^d{(1-y_k^2)^{1/4}}}.
$$

Let us introduce the notion of {preferable set} in multi-dimensional domain.

\begin{definition}
For $m > 0$, $0 < \gamma_0 < 1$, let $Q = \{\by_1,\ldots,\by_m\}$ be a set of samples where $\by_1,\ldots,\by_m$ are drawn independently from the uniform distribution on $[-1,1]^d$. Define $m$ i.i.d random variables 
\begin{align}
\label{define:z}
Z_i =  { \Omega(\by_i)} \exp\(-\dfrac{1}{2 \Omega^2(\by_i)}  \sqrt{\dfrac{m}{\gamma_0}} \),\quad \forall i \in \{1,\ldots, m\}, 
\end{align}
and \textbf{test value} function $T(Q) = \sum_{i=1}^m Z_i$. We call $Q$ a \textbf{preferable sample set} according to $\gamma_0$ if $T(Q)$ is below $(100(1-\gamma_0))$-th percentile of the distribution of $T$. 
\end{definition}

It is straightforward from the above definition that $Q$ is {preferable} according to $\gamma_0$ with probability $1-\gamma_0$. In Lemma \ref{lemmaM:good_set}, we show there exists a universal constant $C$ such that if $Q$ is {preferable} according to $\gamma_0$ then 
\begin{align*}
 \sum_{i=1}^m Z_i \le C \left(\frac{m}{\gamma_0}\right)^{1/4} \left(\frac{4}{\pi}\right)^{2d}  \left(\log\left(\frac{\pi^d}{4^{d}}\cdot \sqrt{\frac{m}{\gamma_0}}\right)\right)^{d-1}  . 
\end{align*}


\subsection{Main result}

Our main theorem in this part is stated as follows. 

{
\begin{theorem}
\label{mult:WREP_theorem}
Let $\delta, \gamma$ and $\gamma_0$ be fixed parameters with 
$0<\delta < 1$, $0 < \gamma + \gamma_0 < 1$ and $\{L_{j}\}_{{j}\in \cJ}$ be a $d$-dimensional Legendre orthonormal 
system of finite size $N = \#(\cJ)$. Denote $s_{\alpha} = {(1+\alpha)^2 s}$ and assume that 
\begin{gather}
\label{mult:complexity}
\begin{aligned}
m \ge \, & s_\alpha^2   \max\Big\{  \frac{C_{d,1}}{\delta^{12} {\gamma_0}}  \, { \log^2 \left( { \dfrac{C_{4,d} s_\alpha^{3/2}}{\delta^{2}} }\right)} \log^2(2N)   \left(\log\left(\frac{\pi^d}{4^{d}} \sqrt{\frac{m}{\gamma_0}}\right)\right)^{4d-4},   
 \\
&\qquad  \frac{C_{d,2}}{ \delta^{{15}/{2}}}  \, { \log^{3/2} \left({ \dfrac{C_{4,d} s_\alpha^{3/2}}{\delta^{2}} }\right)} \log (2N) 
 \left(\log \left( \! C{m}   \left(\! \frac{\pi}{4}\right)^{{d}}   \right) \right)^{{d-1}} ,
\\
 & \qquad \quad  \frac{   {C_{d,3}} }{ \delta^{4} }  { \log\left( { \dfrac{C_{4,d} s_\alpha^{3/2}}{\delta^{2}} }\right)}    \log\left( \frac{ \log( { {C_{4,d} s_\alpha^{3/2}}/{\delta^{2}} })}{\gamma \delta}\right) \left(  \log\left(C{\pi^{d} m}     \right) \right)^{d-1} \Big\}. 
\end{aligned}
 \end{gather}
Let $\by_1,\by_2,\ldots,\by_m$ be drawn independently from uniform distribution on $[-1,1]^d$. Then with probability exceeding $1-(\gamma + \gamma_0)$, 
the normalized sampling matrix $\bA \in \R^{m\times {N}}$ (defined as in \eqref{defA}) satisfies
\begin{align}
  \|\bm{Az}\|_2^2 > (1 -  \delta) \|\bz\|^2_2, \ \ \forall \bm{z}\in {C}(s;\alpha). \label{mult:problem1}
\end{align}
Here, $C_{d,1} = {C} \left(\dfrac{4}{\pi}\right)^{4d}  $, $C_{d,2} = \dfrac{C { (d+1)} }{{(d-1)!)^{\frac12}}} \cdot  \left( \dfrac{64\sqrt{2}}{\pi^2}\right)^d $, $C_{d,3} = \dfrac{C (d+1)}  {  (d-1)! } \cdot \left( \dfrac{4 }{\pi }\right)^{2d}$, and $C_{d,4} = \dfrac{C  (d+1)(d-1)^{d-1}  }{  (d-1)!    }\cdot \dfrac{2^{4d}}{\pi^{{3d}/{2}} }$.
\end{theorem}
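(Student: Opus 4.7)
The plan is to mirror the three-step strategy used to prove Theorem \ref{spL:WREP_theorem}, replacing every univariate envelope estimate by its tensor-product analogue. As in the univariate case, it suffices to prove \eqref{mult:problem1} for $\bz \in \cE_{s,\alpha} = \partial B_2 \cap C(s;\alpha)$, and I would again work under a smallness assumption on $\delta$ that is rescaled at the end. The index set $\cL$ and the threshold $\Xi \sim (1+\alpha)^2 s/\sqrt{\delta}$ used to discretize $|\psi(\by,\bz)|$ in the range $[\delta,\Xi]$ are defined identically; what changes is the probabilistic geometry on $\cU=[-1,1]^d$ induced by the multivariate envelope $\Omega(\by) = 2^d/(\pi^{d/2}\prod_k (1-y_k^2)^{1/4})$.

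In Step 1 I would construct, conditional on $Q$ being preferable according to $\gamma_0$, a discretized approximation $\tilde\psi(\cdot,\bz)$ of $\psi(\cdot,\bz)$ of the form \eqref{note:eq2}, obtained as in \eqref{constructionPsiZ} by rounding $|\psi(\by,\bz_l)|$ to the nearest power of $1+\delta$ where $\bz_l$ comes from a Maurey net $D_l$. The key ingredient is the multivariate analogue of Lemma \ref{note:lemma4}, whose proof I would repeat almost verbatim: the Hoeffding bound over the probabilistic part still costs $C(1+\alpha)^2 s/(\mu^2\sqrt{\varsigma})$ points, while the symmetrization bound over $Q$ together with the preferable-set inequality for $\sum_i Z_i$ yields a covering of size $C(1+\alpha)^2 s \mu^{-2} \sqrt{m/\gamma_0}\,(\log(m/\gamma_0))^{(d-1)/2}$ up to constants depending on $d$. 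The extra $(\log(m/\gamma_0))^{d-1}$ factors will be absorbed in $C_{d,2}$ and appear in the second branch of the max in \eqref{mult:complexity}. I would then partition $\cU$ into $I$, $\underline I$, $\overline I$ and $\cU'$ as in \eqref{note:partition4}, using Lemma \ref{lemma:prob_tail} and a multivariate version of Lemma \ref{lemma:end_set} to bound $\varrho(\overline I)$ and $\int_{\overline I}\Omega^2\,d\varrho$.

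In Step 2 the key decomposition \eqref{spL:est13} carries over unchanged, since the integral $\int_\cU|\psi(\by,\bz)|^2 d\varrho = \|\bz\|_2^2$ is still controlled by the sum over $I, \underline I, \overline I, \cU'$; I would bound each tail term using $|\psi|\leq (1+\alpha)\sqrt{s}\,\Omega$ together with the multivariate estimate of $\int_{\overline I}\Omega^2 d\varrho$. Step 3 is then a Bernstein/union-bound argument exactly as in \eqref{note:comp4}--\eqref{spL:est14}: one picks $\kappa_l = (\delta/2)/(\#\cL)(1+\delta)^{-2l}$ and requires $m \geq 32 e\,(\#\cL)(1+\delta)^{2l}\delta^{-2}[\log(\#F_l) + \log(\#\cL/\gamma)]$ for every $l\in\cL$. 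Substituting the new bound on $\log(\#F_l)$ (which now carries the factor $(\log(m/\gamma_0))^{d-1}$) gives the first two branches of \eqref{mult:complexity}, while the term $\log(\#\cL/\gamma) \cdot (1+\delta)^{2\overline l}/\delta^2$ gives the third branch with constant $C_{d,3}$.

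The main obstacle, and what I expect to be the most delicate part, is the extension of Lemma \ref{lemma:imp_integral1} to the tensor-product weight $\prod_k(1-y_k^2)^{-\tau}$ with an exponential factor of the form $\exp(-\beta/\Omega^2(\by))$. The tail $\P(\Omega(\by)\geq \mu)$ picks up $(\log\mu)^{d-1}$ because the level set $\{\prod_k(1-y_k^2)\le c\}$ has volume of order $c(\log(1/c))^{d-1}$; this propagates through the computation of $\E Z_i$, of $\E|Z_i|^2$, and ultimately through the preferable-set Lemma \ref{lemmaM:good_set} and the symmetrization integral \eqref{spL:est1}. All other pieces (the chaining scaffold, the RE-to-NSP implication, the decomposition of $\cU$, and the Bernstein union bound) are dimension-insensitive, so once these multivariate envelope integrals are written down carefully with the correct $(d-1)!^{-1/2}$ and $(\log m)^{d-1}$ factors, the rest of the argument plugs in mechanically and yields the stated constants $C_{d,1},\dots,C_{d,4}$.
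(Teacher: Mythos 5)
Your proposal captures the paper's strategy accurately: the proof of Theorem~\ref{mult:WREP_theorem} does in fact repeat the three-step chaining/Maurey/union-bound scaffold of Theorem~\ref{spL:WREP_theorem}, with the univariate envelope calculus replaced by tensor-product analogues, and your identification of the level-set volume asymptotic $\varrho(\{\prod_k(1-y_k^2)\le c\})\asymp c(\log(1/c))^{d-1}$ as the source of the extra $(\log m)^{d-1}$ factors is exactly the crux of the matter. However, two statements in your plan understate the technical content of the multivariate case, and one exponent is off.

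First, the cutoff $\Xi$ and the bad-set measure $\varsigma$ can \emph{not} be "defined identically" to the univariate case. In dimension $d\ge 2$ the function $r\mapsto\varrho(\cS_\nu)$ is governed by $H_d(\beta)=\beta(\log(1/\beta))^{d-1}$ rather than by a power, so to force $\int_{\overline I}\Omega^2\,d\varrho\lesssim\delta$ (Lemma~\ref{mult:end_set}) the paper must solve $H_d\left(\frac{(1+\alpha)^2 s}{e\pi^d\Xi^2}\right)\lesssim \frac{\delta}{s}$, i.e.\ set $\Xi$ through the inverse function $K_d = H_d^{-1}$. The same is true for $\varsigma$, which is tied to $\varrho(\cS_\nu)$ by construction and must be estimated via $H_d$ (see \eqref{mult:est16b}). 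The identities and bounds for $K_d$ (Lemmas~\ref{lemmaM:est}, \ref{lemmaM:supp}) are a genuine new piece of analysis, not a mechanical substitution, and they ultimately determine the constant $C_{d,4}$ appearing inside the logarithms of \eqref{mult:complexity}.

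Second, the Maurey covering bound is not $C(1+\alpha)^2 s/(\mu^2\sqrt{\varsigma})$ in the multivariate setting either. The probabilistic branch of Lemma~\ref{mult:lemma4} costs $M\gtrsim (1+\alpha)^2 s\,\mu^{-2}(4/\pi)^d\,[K_d(\cdot)]^{-1/2}$, again because the Markov bound on $\int_D\int_\cU\chi\,d\varrho\,d\overline\lambda$ is now $\asymp H_d(1/\beta^2)$ rather than $\asymp 1/\beta^2$. And in the $Q$-branch your claimed exponent $(\log(m/\gamma_0))^{(d-1)/2}$ should be $(\log(\cdot))^{2d-2}$: the preferable-set Lemma~\ref{lemmaM:good_set} gives $\sum_i Z_i\lesssim (m/\gamma_0)^{1/4}(4/\pi)^{2d}(\log(\cdot))^{d-1}$, and squaring the resulting $\mu$-constraint in the passage from \eqref{mult:est9} to \eqref{mult:est10} doubles the exponent to $2d-2$, which then becomes $4d-4$ in the first branch of \eqref{mult:complexity}. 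So the outline is right, but the calibration through $H_d$, $K_d$ and the associated logarithmic powers is where the actual work lies; these details do not "plug in mechanically."
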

}

We observe that for $d=1$, condition \eqref{mult:complexity} retrieves the estimate \eqref{spL:complexity} in the univariate case (up to a minor change of the order of $\delta$ inside log factors). The number of log factors for multi-dimensional setting however scales linearly in $d$ and is at most $4d$; therefore, the sample complexity grows exponentially in dimension. Similarly to the univariate case, random matrices associated with preferable sets are more likely to satisfy restricted eigenvalue property. Also, preferable sets according to moderate and high $\gamma_0$ reduce the required number of samples for sparse reconstructions, as shown in the following proposition. 

{
\begin{proposition}
\label{mult:WREP_theorem_pref_set}
Let $\delta, \gamma$ and $\gamma_0$ be fixed parameters in $(0,1)$ and $\{L_{j}\}_{{j}\in \cJ}$ be a $d$-dimensional Legendre orthonormal 
system of finite size $N = \#(\cJ)$.  Denote $s_{\alpha} =  {(1+\alpha)^2 s}  $ and assume that
\begin{gather}
\label{mult:complexity_2}
\begin{aligned}
m \ge \, & s_\alpha^2   \max\Big\{  \frac{C_{d,1}}{\delta^{12} {\gamma_0}}  \, { \log^2 \left( { \dfrac{C_{4,d} s_\alpha^{3/2}}{\delta^{2}} }\right)} \log^2(2N)     \left(\log\left(\frac{\pi^d}{4^{d}} \sqrt{\frac{m}{\gamma_0}}\right)\right)^{4d-4},   
 \\
&\qquad  \frac{C_{d,2}}{ \delta^{{15}/{2}}}  \, { \log^{3/2} \left({ \dfrac{C_{4,d} s_\alpha^{3/2}}{\delta^{2}} }\right)} \log (2N) 
 \left(\log \left( \! C{m}   \left(\! \frac{\pi}{4}\right)^{{d}}   \right) \right)^{{d-1}} ,
\\
 & \qquad \quad  \frac{   {C_{d,3}} }{ \delta^{4} }  { \log\left( { \dfrac{C_{4,d} s_\alpha^{3/2}}{\delta^{2}} }\right)}    \log\left( \frac{ \log( { {C_{4,d} s_\alpha^{3/2}}/{\delta^{2}} })}{(1-\gamma_0) \gamma \delta}\right) \left(  \log\left(C{\pi^{d} m}     \right) \right)^{d-1} \Big\}. 
\end{aligned}
 \end{gather}
Let $Q = \{\by_1,\by_2,\ldots,\by_m\}$ be a set of samples drawn independently from the uniform distribution on $[-1,1]^d$. If $Q$ is a preferable set according to $\gamma_0$, then with probability exceeding $1-\gamma$, 
the normalized sampling matrix $\bA \in \mathbb{R}^{m\times {N}}$ (defined as in \eqref{defA}) satisfies
\begin{align}
  \|\bm{Az}\|_2^2 > (1 -  \delta) \|\bz\|^2_2,\ \ \forall \bm{z}\in {C}(s;\alpha). \label{mult:problem1_prefset}
\end{align}
Here, $C_{d,1},C_{d,2},C_{d,3}$ and $C_{d,4}$ are defined as in Theorem \ref{mult:WREP_theorem}. 
\end{proposition}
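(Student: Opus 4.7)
The plan is to mirror the argument given in Section \ref{sec:1d_prefset} for the proof of Proposition \ref{spL:WREP_theorem_pref_set}, simply transposed to the multivariate setting. In the proof of Theorem \ref{mult:WREP_theorem} (which I may assume), the failure event for the restricted eigenvalue property is naturally split into two independent pieces: (a) the event that $Q = \{\by_1,\ldots,\by_m\}$ is \emph{not} a preferable sample set according to $\gamma_0$, which by the multivariate analogue of Lemma \ref{lemma:good_set} has probability at most $\gamma_0$; and (b) the event that the union bound in Step 3, built from the tail bound of Lemma \ref{lemma:tailbound} applied to every $\Delta \in \bigcup_{l\in\cL} F_l$, fails, which has probability at most $\gamma$ provided the third term in the sample complexity \eqref{mult:complexity} is in force.

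My first step would be to verify that Steps 1 and 2 of the proof of Theorem \ref{mult:WREP_theorem}, which construct the approximation $\tilde\psi(\cdot,\bz)$ via the chaining/empirical Maurey scheme and derive the sandwich estimate between $\|\bz\|_2^2$ and $\|\bA\bz\|_2^2$ up to the residual sum $\sum_{l\in\cL}(1+\delta)^{2l}(\varrho(I_l) - \#(Q\cap I_l)/m)$, are valid \emph{deterministically} as soon as $Q$ is preferable: the covering estimate that relies on the test-value bound \eqref{spL:est3} is used only through the preferability of $Q$, and no other probabilistic ingredient enters these two steps. This is essential so that the $\gamma_0$ loss enters the overall analysis only through event (a).

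Second, I would rerun Step 3 exactly as in the proof of Theorem \ref{mult:WREP_theorem}, but condition on the event that $Q$ is preferable. For a set $Q$ drawn uniformly from $[-1,1]^{dm}$ without conditioning, the probability that $Q$ is preferable and the union bound over $\bigcup_{l\in\cL} F_l$ simultaneously succeeds is at least $1 - \gamma_0 - \gamma$. Using $\P(A\mid B)\ge 1-\P(A^c)/\P(B)$ with $\P(B)\ge 1-\gamma_0$, this translates, upon conditioning on $B = \{Q\text{ preferable}\}$, into a conditional success probability of at least $1-\gamma/(1-\gamma_0)$. Equivalently, we pay for the conditioning by replacing $\gamma$ with $(1-\gamma_0)\gamma$ in the union-bound requirement $\sum_{l\in\cL}\exp(-m\kappa_l\delta/(16e))(\#F_l)\le \gamma$, which only affects the third term of the sample-complexity bound \eqref{mult:complexity} and produces precisely \eqref{mult:complexity_2}.

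The only potential obstacle is a bookkeeping one: ensuring that the multivariate covering estimates and the tail estimates behind Step 3 do not accumulate any additional implicit dependence on $\gamma_0$ beyond the one already absorbed into the preferability assumption. Once one observes (as in the univariate case) that the chaining step with the envelope bound $\Omega$ uses the preferable-set bound \eqref{spL:est3} only in a deterministic way and that the multivariate analogues of Lemmas \ref{lemma:prob_tail}--\ref{lemma:imp_integral1} remain deterministic integral inequalities, the reduction to Theorem \ref{mult:WREP_theorem} with a rescaled $\gamma$ is immediate, and Proposition \ref{mult:WREP_theorem_pref_set} follows by the same change of variable that closes the proof of Proposition \ref{spL:WREP_theorem_pref_set}. $\square$
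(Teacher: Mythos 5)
Your proposal is correct and takes essentially the same approach as the paper: it transposes verbatim the argument the paper gives for the univariate Proposition~\ref{spL:WREP_theorem_pref_set} in Section~\ref{sec:1d_prefset} — noting that Steps 1--2 of the proof of Theorem~\ref{mult:WREP_theorem} are deterministic once $Q$ is preferable, bounding the conditional failure probability of the union bound by $\P(A^c)/\P(B)\le\gamma/(1-\gamma_0)$, and rescaling $\gamma\mapsto(1-\gamma_0)\gamma$ so that only the third term of \eqref{mult:complexity} changes, yielding \eqref{mult:complexity_2}. (One small remark: the intermediate observation that $\P(A\cap B)\ge 1-\gamma_0-\gamma$ is not actually used and could be dropped, as the conditional bound $\P(A^c\mid B)\le\P(A^c)/\P(B)$ is what carries the proof.)
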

}

We will prove Theorem \ref{mult:WREP_theorem} in Sections \ref{mult:mainproof}. Proposition \ref{mult:WREP_theorem_pref_set} can be derived directly from the proof of Theorem \ref{mult:WREP_theorem} and will be skipped. First, we need some preparatory lemmas.

\subsection{Supporting lemmas} 
{
For $d\ge 2$, we define the function
\begin{align*}
H_d(\beta) = \beta \left(\log(\frac{1}{\beta})\right)^{d-1}.
\end{align*}
In particular, the case $d =2$ gives
$H_2(\beta) = \beta \log(\frac{1}{\beta})$. Note that $\beta \mapsto \beta \log(\frac{1}{\beta})$ is strictly increasing for $0 < \beta  < 1/e$, and $\beta \mapsto H_d(\beta)$ is strictly increasing for $0< \beta < \frac{1}{e^{d-1}}$, the inverse function $K_d(M) := H_d^{-1}(M)$ is well-defined for $0< M < H_d(\frac{1}{e^{d-1}}) = (\frac{d-1}{e})^{d-1}$. We prove a basic identity involving $H_d$ and $K_d$. 

\begin{lemma}
\label{lemmaM:est}
Let $0< \beta <  (\frac{d-1}{e})^{d-1}$, there follows 
\begin{align*}
H_d(K_d^2 (\beta)) = 2^{d-1} \beta K_d (\beta). 
\end{align*}
\end{lemma}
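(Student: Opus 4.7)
The plan is to prove the identity by directly unwinding the definitions. Set $u := K_d(\beta)$, so by the defining property of the inverse we have $H_d(u) = \beta$, i.e.\ $u \bigl(\log(1/u)\bigr)^{d-1} = \beta$. The hypothesis $0 < \beta < \bigl(\tfrac{d-1}{e}\bigr)^{d-1}$ guarantees we are in the strictly increasing regime $0 < u < \tfrac{1}{e^{d-1}}$ where $K_d$ is well-defined, so $u$ is an unambiguous point in $(0,1)$.

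Next I would compute $H_d(u^2)$ by plugging into the definition:
\[
H_d(u^2) = u^2 \bigl(\log(1/u^2)\bigr)^{d-1} = u^2 \bigl(2 \log(1/u)\bigr)^{d-1} = 2^{d-1}\, u \cdot \bigl[u \bigl(\log(1/u)\bigr)^{d-1}\bigr].
\]
The bracket equals $\beta$ by the defining relation $H_d(u) = \beta$, so
\[
H_d(u^2) = 2^{d-1} u \beta = 2^{d-1} \beta K_d(\beta),
\]
which is the desired identity.

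There is no real obstacle here; the only thing worth flagging is the domain check, namely verifying that $u^2$ still lies in a regime where the formula $H_d(u^2) = u^2(\log(1/u^2))^{d-1}$ is meaningful as a real number (it clearly is, since $0 < u^2 < u < 1$ guarantees $\log(1/u^2) > 0$). No appeal to the monotonicity of $H_d$ beyond well-definedness of $K_d(\beta)$ is needed, and the identity $\log(1/u^2) = 2\log(1/u)$ is the single algebraic observation driving everything. This lemma will presumably be used later to relate quantities of the form $H_d(M^2)$ back to $K_d$ applied to $H_d(M)$, which arises naturally when bounding chaining integrals against $m/\gamma_0$.
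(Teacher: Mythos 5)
Your proof is correct and is essentially the same computation as the paper's: both unwind $H_d$ at $K_d^2(\beta)$, use $\log(1/u^2) = 2\log(1/u)$ to pull out the factor $2^{d-1}$, and then recognize the remaining product as $K_d(\beta)\,H_d(K_d(\beta)) = K_d(\beta)\,\beta$. The substitution $u := K_d(\beta)$ is purely notational and your extra remark about the domain check does not change the argument.
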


\begin{proof}
By definition of $H_d$,
\begin{align*}
H_d(K_d^2(\beta)) & = K_d^2(\beta) \left(\log(\frac{1}{K_d^2(\beta)})\right)^{d-1}  = 2^{d-1} K_d^2(\beta) \left(\log(\frac{1}{K_d(\beta)})\right)^{d-1}
\\
& =  2^{d-1} K_d(\beta) H_d(K_d(\beta))  = 2^{d-1} \beta K_d(\beta). 
\end{align*}
\end{proof}

While an analytical formula of $K_d(M)$ is not known to us, the following lemma provides an estimate of $K_d(M)$.  
\begin{lemma}
\label{lemmaM:supp}
For all $d\ge 2$, $0<M<(\frac{d-1}{e})^{d-1}$, there holds 
\begin{align}
\label{supp:est2}
& K_{d}(M) \ge \frac{M}{(d-1)^{d-1}} \exp\left(\!-(d-1)\log^{\frac{1}{2}}\left(\frac{d-1}{M^\frac{1}{d-1}}\right)\right).
\\
\text{Consequently, }\ &\log(K_{d}(M)) \ge - \log\(\frac{(d-1)^{d-1}}{M}\) -(d-1)^{\frac12}\log^{\frac{1}{2}}\left(\frac{(d-1)^{d-1}}{M}\right). \notag
\end{align}
\end{lemma}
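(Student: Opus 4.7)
The plan is to establish the lower bound by inverting: since $H_d$ is strictly increasing on $(0, 1/e^{d-1})$ and $K_d = H_d^{-1}$ on the compatible image, it suffices to check that the candidate lower bound
\[
L := \frac{M}{(d-1)^{d-1}} \exp\!\left(-(d-1)\log^{1/2}\!\left(\frac{d-1}{M^{1/(d-1)}}\right)\right)
\]
lies in $(0, 1/e^{d-1})$ and satisfies $H_d(L) \leq M$, from which $K_d(M) \geq L$ follows by monotonicity.

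To carry this out cleanly, I would introduce the shorthand $a := \frac{1}{d-1}\log\!\left(\frac{(d-1)^{d-1}}{M}\right)$, which is positive by the hypothesis $M < ((d-1)/e)^{d-1}$ and in fact satisfies $a > 1$. A direct rewriting then gives $L = e^{-(d-1)(a+\sqrt{a})}$, so $\log(1/L) = (d-1)(a+\sqrt{a})$ and
\[
H_d(L) = L \cdot (d-1)^{d-1}(a+\sqrt{a})^{d-1} = M \cdot e^{-(d-1)\sqrt{a}} (a+\sqrt{a})^{d-1}.
\]
Thus the inequality $H_d(L) \leq M$ reduces to $(a+\sqrt{a})^{d-1} \leq e^{(d-1)\sqrt{a}}$, i.e.\ to the one-variable inequality $a + \sqrt{a} \leq e^{\sqrt{a}}$. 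Setting $t = \sqrt{a}$, this amounts to $t^2 + t \leq e^t$ for $t \geq 1$, which I would verify by elementary calculus: $f(t) := e^t - t^2 - t$ satisfies $f(0)=1$, while $f'(t) = e^t - 2t - 1$ has a unique positive critical point $t_\star \in (1, 3/2)$ with $f(t_\star) > 0$, and $f''(t) = e^t - 2 \to \infty$, so $f \geq 0$ throughout. Since $a > 1$ gives $t > 1$, this is more than enough. The membership $L < 1/e^{d-1}$ needed for the monotonicity argument is immediate from $a + \sqrt{a} > a > 1$, which puts $L$ firmly inside the monotonicity regime of $H_d$.

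For the consequence, simply taking logarithms of $K_d(M) \geq L$ yields
\[
\log K_d(M) \geq -(d-1)(a + \sqrt{a}) = -\log\!\left(\frac{(d-1)^{d-1}}{M}\right) - (d-1)\sqrt{a},
\]
and substituting back $(d-1)\sqrt{a} = \sqrt{d-1}\cdot\sqrt{\log((d-1)^{d-1}/M)}$ gives the stated inequality exactly.

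The main obstacle is essentially notational bookkeeping: choosing the right substitution $a$ so that the exponential factor in $L$ collapses cleanly against the polynomial factor $(a+\sqrt{a})^{d-1}$ produced by $H_d$. Once the problem is reduced to the one-variable inequality $t^2 + t \leq e^t$, the remaining work is routine calculus; I would not expect any dimension-dependent subtlety beyond verifying that the hypothesis $M < ((d-1)/e)^{d-1}$ forces $a > 1$ and hence keeps us inside the monotonicity branch of $H_d$.
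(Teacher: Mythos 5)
Your proof is correct and is essentially the same argument as the paper's, just presented more uniformly. The paper first proves the case $d=2$ by the substitution $\beta_0 = \sqrt{\log(1/M)}$, reducing the claim to $\beta_0^2 + \beta_0 \le e^{\beta_0}$ for $\beta_0 > 1$, and then says that for $d > 2$ the result follows from the $d=2$ case ``by change of variables'' (namely, writing $\beta = \tilde\beta^{\,d-1}$ so that $H_d(\beta) \le M$ becomes $H_2(\tilde\beta) \le M^{1/(d-1)}/(d-1)$). Your substitution $a = \frac{1}{d-1}\log\bigl(\frac{(d-1)^{d-1}}{M}\bigr)$ and $t = \sqrt a$ collapses both of the paper's steps into a single computation valid for all $d \ge 2$, arriving at exactly the same one-variable inequality $t^2 + t \le e^t$; your $t$ is precisely the paper's $\beta_0$ after the change of variables. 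The only thing to note is that your calculus verification of $t^2 + t \le e^t$ uses $f(0)=1$ together with the location of the critical point, whereas the paper simply says ``an inspection of the map proves it''; both are fine, and your argument (using $e^{t_\star} = 2t_\star + 1$ at the critical point, so $f(t_\star) = -t_\star^2 + t_\star + 1 > 0$ since $t_\star < (1+\sqrt 5)/2$) is a clean way to make that inspection rigorous.
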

\begin{proof}
We first prove \eqref{supp:est2} for $d = 2$. It is enough to show that 
\begin{align}
\label{supp:est2b}
\beta \log(\frac{1}{\beta}) \le M,\ \   \text{ for } \beta = M\exp\left(-{\log^{1/2}\left(\frac1M\right)}\right). 
\end{align}
Denote $\beta_0 = \sqrt{\log(\frac{1}{M})}$. Since $0<M<1/e$, then $1<\beta_0<\infty$. We have 
$
 \beta\log(\frac{1}{\beta}) = e^{-\beta_0^2 - \beta_0}(\beta_0^2 + \beta_0)
 $ 
 and 
 $
 M = e^{-\beta_0^2}, 
$
thus 
$
\beta \log(\frac{1}{\beta}) \le M
$
is equivalent to
$
\beta_0^2 + \beta_0 \le e^{ \beta_0},\, \forall \beta_0 > 1
$. An inspection of the map $\beta_0 \mapsto e^{ \beta_0} - \beta_0^2 - \beta_0$ over $(1,\infty)$ proves the desired inequality. 

For $d>2$, from \eqref{supp:est2b} and by change of variables, one can show that 
\begin{align*}
H_d(\beta) \le  M,\ \ \text{ for } \beta  =  \frac{M}{(d-1)^{d-1}} \exp\left(\!-(d-1)\log^{\frac{1}{2}}\left(\frac{d-1}{M^\frac{1}{d-1}}\right)\right), 
\end{align*}
concluding the proof. 
\end{proof}
}


\begin{lemma}
\label{mult:volume}
Let $d\ge 1$. Define $v_d(r):= \varrho \left( \by\in \cU: \prod_{k=1}^d{(1- y_k^2)} \le r \right)$. Then, for any $0< r \le 1$, 
{
\begin{align}
 \frac{r}{2(d-1)!} \left(\log\(\frac{1}{\sqrt{r}}\)\right)^{d-1}  & <  \, v_d(r)  \le \,  \frac{ r}{(d-1)!} \left(\log\(\frac{2^{d}e}{\sqrt{r}}\)\right)^{d-1},   \label{mult:est_vol}
\\
\text{ or equivalently,}  \ \ \  \frac{H_d(r)}{2^d (d-1)!}   & <  \, v_d(r)  \le \frac{ 2^{d+1} e^2}{(d-1)!} \, H_d(\frac{r}{4^d e^2}).  \notag 
\end{align}
}
\end{lemma}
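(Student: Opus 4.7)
The plan is to convert the statement into a concrete Lebesgue integral on $[0,1]^d$ and then treat the two bounds separately. Observing that the event $\{\prod(1-y_k^2)\le r\}$ is invariant under $y_k\mapsto -y_k$ reduces the probability to $\mathbf{y}\in[0,1]^d$, and the substitution $u_k=y_k^2$ gives the working representation
$$
v_d(r)=\int_{\{u\in[0,1]^d:\prod u_k\le r\}}\prod_{k=1}^d\frac{du_k}{2\sqrt{1-u_k}}.
$$

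For the lower bound I would use the crude estimate $\frac{1}{2\sqrt{1-u_k}}\ge\tfrac12$, which immediately gives $v_d(r)\ge 2^{-d}\,\mathrm{vol}\{u\in[0,1]^d:\prod u_k\le r\}$. The classical identity $\mathrm{vol}\{u\in[0,1]^d:\prod u_k\le r\}=r\sum_{k=0}^{d-1}(\log(1/r))^k/k!$ follows from the fact that $-\log U_k\sim\mathrm{Exp}(1)$ for $U_k\sim U[0,1]$, so $\sum_k(-\log U_k)\sim\mathrm{Gamma}(d,1)$. Keeping only the dominant $k=d-1$ term and rewriting $\log(1/r)=2\log(1/\sqrt r)$ yields $v_d(r)\ge \frac{r(\log(1/\sqrt r))^{d-1}}{2(d-1)!}=\frac{H_d(r)}{2^d(d-1)!}$, with strict inequality coming from the remaining positive terms of the sum.

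For the upper bound I would write $v_d(r)=P(\prod(1-Y_k^2)\le r)$ with $Y_k\sim U[0,1]$ i.i.d., factor $1-Y_k^2=(1-Y_k)(1+Y_k)$, and exploit the elementary inequality $1+y\ge 2^y$ on $[0,1]$ (equivalent to $\log(1+y)\ge y\log 2$, valid by concavity of $\log(1+y)$ and equality at the endpoints $y=0,1$). Hence $\prod(1-Y_k^2)\ge 2^{\sum Y_k}\prod(1-Y_k)$, so the event is contained in $\{\prod(1-Y_k)\le r\cdot 2^{-\sum Y_k}\}$. Setting $W_k=1-Y_k\sim U[0,1]$ and $E_k=-\log W_k\sim\mathrm{Exp}(1)$, this translates to
$$
\textstyle\sum_k E_k+\log 2\cdot\sum_k e^{-E_k}\ge\log(2^d/r).
$$
Apply Jensen's inequality $\sum_k e^{-E_k}\ge d\,e^{-\sum E_k/d}$ to pass to a condition on $S:=\sum_k E_k$ alone, then bound $P(S\ge s^\ast)$ using the Gamma$(d,1)$ tail $e^{-s}\sum_{k=0}^{d-1}s^k/k!$ together with the polynomial inequality $\sum_{k=0}^{d-1}s^k/k!\le (s+d-1)^{d-1}/(d-1)!$ (a straightforward induction on $d$). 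Using $\log(2^d/r)=2\bigl(\log(1/\sqrt r)+\tfrac{d}{2}\log 2\bigr)$ to collect constants, the resulting bound is majorised by the claimed $\tfrac{r}{(d-1)!}(\log(2^d e/\sqrt r))^{d-1}$, equivalently $\tfrac{2^{d+1}e^2}{(d-1)!}H_d(r/(4^d e^2))$.

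The main obstacle is the upper bound. The naive stochastic-dominance estimate $1-y^2\ge 1-y$ gives only $v_d(r)\le r\sum_{k=0}^{d-1}(\log(1/r))^k/k!$, which is off by a factor of roughly $2^{d-1}$ inside the $(d-1)$-st power; recovering this factor requires the sharper $1+y\ge 2^y$ combined with the Jensen step to exploit both factors in $(1-y)(1+y)$. Controlling constants so that the logarithm appears in the claimed form $\log(2^d e/\sqrt r)$ (rather than something like $\log(2^{d/2}/\sqrt r)$) is the delicate point, and is the reason the stated upper bound is precisely a factor of roughly $2$ larger than the lower bound in the leading order.
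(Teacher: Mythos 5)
Your change of variables $u_k = 1 - y_k^2$ gives the representation $v_d(r) = \int_{\{u\in[0,1]^d:\prod_k u_k\le r\}}\prod_{k=1}^d\frac{du_k}{2\sqrt{1-u_k}}$, and the lower-bound argument from there is correct and more direct than the paper's. You bound the density $\frac{1}{2\sqrt{1-u_k}}\ge\frac{1}{2}$ once, invoke the exact Gamma$(d,1)$ tail identity $\mathrm{vol}\{u\in[0,1]^d:\prod_k u_k\le r\}=r\sum_{k=0}^{d-1}(\log(1/r))^k/k!$, and keep the leading term; the paper instead proves the lower bound by induction on $d$ via the recurrence $v_{d+1}(r)=\int_{-1}^{1}v_d\bigl(r/(1-y_{d+1}^2)\bigr)\,dy_{d+1}/2$. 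One minor correction: for $d=1$ the strictness must come from the a.e.\ strict inequality $\frac{1}{2\sqrt{1-u}}>\frac12$, not from dropped terms in the sum (there are none when $d=1$).

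The upper-bound half, however, does not go through. You correctly reduce to $v_d(r)\le P\bigl(S+\log 2\sum_k e^{-E_k}\ge \log(2^d/r)\bigr)$ with $E_k$ i.i.d.\ $\mathrm{Exp}(1)$ and $S=\sum_k E_k$, but the Jensen inequality $\sum_k e^{-E_k}\ge d\,e^{-S/d}$ points the wrong way for this purpose. Substituting that \emph{lower} bound for $\sum_k e^{-E_k}$ makes the constraint $S+\log 2\sum_k e^{-E_k}\ge c$ \emph{easier} to satisfy: you get the inclusion $\{S+d\log 2\, e^{-S/d}\ge c\}\subseteq\{S+\log 2\sum_k e^{-E_k}\ge c\}$, hence only a lower bound $P\bigl(S+d\log 2\, e^{-S/d}\ge c\bigr)\le P\bigl(S+\log 2\sum_k e^{-E_k}\ge c\bigr)$, which is useless for bounding $v_d(r)$ from above. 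To decouple into a condition on $S$ alone one would need a deterministic \emph{upper} bound on $\sum_k e^{-E_k}$ given $S$, but the extremal bound $\sum_k e^{-E_k}\le e^{-S}+(d-1)$ is essentially the trivial $\sum_k e^{-E_k}\le d$ once $S$ is moderately large; it only reproduces the naive threshold $\log(1/r)$ and loses the factor $2^{d-1}$ that the lemma needs. The paper avoids this obstruction by handling the upper bound inductively as well: in the iterated integral $\int_r^1\bigl(\log(2^{d}e/\sqrt u)\bigr)^{d-1}\,du/\bigl(2\sqrt{u(u-r)}\bigr)$ it enlarges $\log(2^{d}e/\sqrt u)$ to $\log\bigl(2^{d+1}e/(\sqrt u+\sqrt{u-r})\bigr)$ so that the integrand becomes an exact derivative of $\frac{1}{d}\bigl(\log(2^{d+1}e/(\sqrt u+\sqrt{u-r}))\bigr)^d$, and absorbs the remaining boundary term via a Stirling estimate. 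Some device playing that role is what is missing from your upper bound.
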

\begin{proof}
First, we prove the upper bound in \eqref{mult:est_vol} by induction on $d$. Observe that $v_d(r) = 1,\, \forall d\ge 1, r\ge 1$. Consider $r< 1$. For $d = 1$, we have 
$$
v_1(r) = \varrho \left( \by \in [-1,1]: {(1-\by^2)} \le r \right)  = 1 - \sqrt{1-r} \le r. 
$$
For $d\ge 1$, applying Fubini's theorem gives the recurrence relation 
\begin{align}
\label{mult_vol:est1}
v_{d+1}(r) &= \int_{-1}^{1} v_{d}\left(\frac{r}{1-y_{d+1}^2}\right) \frac{dy_{d+1}}{2} 
 = \int_{0}^{\sqrt{1-r}} v_{d}\left(\frac{r}{1-y_{d+1}^2}\right) {dy_{d+1}} +  \int_{\sqrt{1-r}}^1 1 {dy_{d+1}}.
\end{align}
By a change of variable and induction hypothesis, it gives
\begin{align*}
\int\limits_{0}^{\sqrt{1-r}} v_{d}\left(\frac{r}{1-y_{d+1}^2}\right) {dy_{d+1}}  = \frac{r}{2} \int\limits_r^1 \frac{v_d(u) du}{u^2 \sqrt{1 - \frac ru}} \le  \frac{ {r}}{ (d-1)!} \int\limits_r^1 \frac{  \left(\log\(\frac{2^{d}e}{\sqrt{u}} \)\right)^{d-1} du}{2\sqrt{u (u - r)}}.
\end{align*}
For all $u\in [r,1]$, we observe that 
$ 
\log\(\frac{2^{d+1}e} {\sqrt{u} + \sqrt{u-r}}  \) \ge  \log\(\frac{2^{d}e}{\sqrt{u}}\) \ge 0,
$
thus
$$
 \left( \log\(\frac{2^{d+1}e} {\sqrt{u} + \sqrt{u-r}}  \)\right)^{d-1} \ge   \left( \log\(\frac{2^{d}e}{\sqrt{u}}\) \right)^{d-1},\ \ \forall d\ge 1. 
$$
There follows 
\begin{gather}
\label{mult_vol:est2}
\begin{aligned}
&\int\limits_{0}^{\sqrt{1-r}} v_{d}\left(\frac{r}{1-y_{d+1}^2}\right) {dy_{d+1}}  \le \frac  {r}{(d-1)!} \int\limits_r^1 \frac{ \left(   \log\(\frac{2^{d+1 } e} {\sqrt{u} + \sqrt{u-r}}  \) \right)^{d-1}  du}{2 \sqrt{u (u - r)}}
\\   
&\qquad \le     \, \frac{ {r}}{d!} \left( \left(   \log\(\frac{2^{d+1} e} {\sqrt{u} + \sqrt{u-r}}  \) \right)^{d}  \right) \Biggl |_{u =1}^{r}
   \\  
&\qquad  =       \frac{ {r}}{d!} \left( \left(\log\(\frac{2^{d+1} e } {\sqrt{r}} \)  \right)^{d}   -  { \left(\log\(\frac{2^{d+1 } e }{1 + \sqrt{1-r}}\)\right)^{d} }\right)   . 
\end{aligned}
\end{gather}
Combining \eqref{mult_vol:est1} and \eqref{mult_vol:est2} and applying the estimate $1 - \sqrt{1-r} \le r$, we arrive at  
\begin{align*}
v_{d+1}(r) \le   \frac{ {r}}{d!} \left( \left(\log\(\frac{2^{d + 1} e } {\sqrt{r}} \)  \right)^{d} -  { \left(\log\(\frac{2^{d+1}e}{1 + \sqrt{1-r}}\)\right)^{d} }\right) + r. 
\end{align*}
It is enough to prove $ \left(\log\(\frac{2^{d+1}e }{1 + \sqrt{1-r}}\)\right)^{d} /d! \ge 1$. Applying Stirling's approximation $d! < \sqrt{2\pi d} \(\frac{d}{e}\)^d e^{1/(12d)} $, \cite{Robbin55}, we will show that 
$
\log\(\frac{2^{d+1}e }{1 + \sqrt{1-r}}\)\ge  ({2\pi d})^{\frac{1}{2d}} \(\frac{d}{e}\) e^{\frac{1}{12d^2}},
$
or equivalently, after some rearrangement,  
\begin{align}
\label{mult_vol:est2b}
\frac{\log\(2 \(  \frac{2e }{1 + \sqrt{1-r}}\)^{1/d}\)}{{e^{1/d}}} \ge   \left[\frac{\sqrt{2\pi } e^{1/(12d)}}{ e}\right]^{1/d}  \frac{d^{1/{(2d)}}}{e}. 
\end{align}
We observe 
\begin{align}
\label{mult_vol:est3}
\frac{\log\(2 \(  \frac{2e }{1 + \sqrt{1-r}}\)^{1/d}\)}{{e^{1/d}}} \ge \frac{\log(2e^{1/d})}{e^{1/d}} \ge \frac{\log(2e)}{e}, \ \ \forall d\ge 1,\, 0\le r \le 1. 
\end{align}
On the other hand, it is easy to check that 
\begin{align}
\label{mult_vol:est4}
\left[\frac{\sqrt{2\pi } e^{1/(12d)}}{ e}\right]^{1/d}  \frac{d^{1/{(2d)}}}{e} \le \frac{\sqrt{2\pi } e^{1/12}}{ e} . \frac{e^{1/(2e)}}{e} . 
\end{align}
A combination of \eqref{mult_vol:est3} and \eqref{mult_vol:est4} gives \eqref{mult_vol:est2b}.

Next, we establish the lower bound of $v_d(r)$, again by induction on $d$. For $d = 1$, 
$
v_1(r)  = 1 - \sqrt{1-r} > \frac{r}{2}. 
$
For $d\ge 1$, recall 
$$
v_{d+1}(r)  = \int\limits_{0}^{\sqrt{1-r}} v_{d}\left(\frac{r}{1-y_{d+1}^2}\right) {dy_{d+1}} +  \int\limits_{\sqrt{1-r}}^1 1 {dy_{d+1}} > \int\limits_{0}^{\sqrt{1-r}} v_{d}\left(\frac{r}{1-y_{d+1}^2}\right) {dy_{d+1}}.
$$ 
By induction hypothesis, it gives
\begin{align*}
v_{d+1}(r) &\,> \frac{r}{2} \int\limits_r^1 \frac{v_d(u) du}{u^2 \sqrt{1 - \frac ru}} > \frac{r}{4(d-1)!} \int\limits_r^1 \frac{ \left(\log\(\frac{1}{\sqrt{u}}\)\right)^{d-1}  du}{u \sqrt{1 - \frac ru}}  
\\
& \, >  \frac{r}{4(d-1)!} \int\limits_r^1 \frac{ \left(\log\(\frac{1}{\sqrt{u}}\)\right)^{d-1}  du}{u}  = \frac{r}{2.d!} \left(\log\(\frac{1}{\sqrt{r}}\)\right)^{d} , 
\end{align*}
as desired.
\end{proof}

We define 
$
\cS_{\mu}: = \Omega^{-1}([\mu,\infty)) = \{ \by\in \cU: |\Omega(\by)| \ge \mu\}.
$
With notice that 
$
 \varrho\left(\cS_{\mu} \right) = v_d\left( \dfrac{2^{4d}}{\pi^{2d}\mu^4} \right), 
$
the following result is a direct consequence of Lemma \ref{mult:volume} and a multivariate version of Lemma \ref{lemma:prob_tail}.  
\begin{corollary}
\label{mult:prob_tail}
For $d\ge 2$, $\mu \ge \dfrac{2^d}{{\pi}^{d/2}}$, there holds
\begin{gather}
\label{mult_vol:est5}  
\begin{aligned}
 {\frac{1}{2^d (d-1)!}\, H_d( \frac{2^{4d}}{\pi^{2d}\mu^4}) } &< \varrho\left( \cS_{\mu} \right)  
\le  { \frac{2^{d+1} e^2}{(d-1)!} \, H_d\left({\frac{2^{2d}}{e^2 \pi^{2d}\mu^4}}\right)}.  
 \end{aligned}
\end{gather}
\end{corollary}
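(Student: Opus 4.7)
The plan is to derive Corollary 4.4 essentially as a substitution result, since all the real work is carried out in Lemma 4.3. The first thing I would do is rewrite the defining condition of $\cS_\mu$ as a condition on the product $\prod_{k=1}^d(1-y_k^2)$. Since
$\Omega(\by)=\frac{2^d}{\pi^{d/2}\prod_{k=1}^d(1-y_k^2)^{1/4}},$
the inequality $|\Omega(\by)|\ge\mu$ is equivalent to $\prod_{k=1}^d(1-y_k^2)\le \frac{2^{4d}}{\pi^{2d}\mu^4}$. Consequently $\varrho(\cS_\mu)=v_d(r)$ with $r:=\frac{2^{4d}}{\pi^{2d}\mu^4}$. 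This is the ``multivariate version of Lemma 2.4'' alluded to in the statement: in dimension one it specializes exactly to the bound $16/(\pi^2\mu^4)$ already proved.

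Next I would verify that the hypothesis $\mu\ge 2^d/\pi^{d/2}$ places $r$ inside the range where Lemma 4.3 applies. Raising the hypothesis to the fourth power gives $\mu^4\ge 2^{4d}/\pi^{2d}$, i.e.\ $r\le 1$, so the two-sided estimate \eqref{mult:est_vol} is at our disposal.

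The last step is pure substitution. Applying the lower bound in Lemma 4.3 in its $H_d$ form yields
$\varrho(\cS_\mu)=v_d(r)>\frac{1}{2^d(d-1)!}H_d(r)=\frac{1}{2^d(d-1)!}H_d\!\Bigl(\frac{2^{4d}}{\pi^{2d}\mu^4}\Bigr),$
which is the left inequality of \eqref{mult_vol:est5}. For the upper bound, Lemma 4.3 gives $v_d(r)\le \frac{2^{d+1}e^2}{(d-1)!}H_d(r/(4^d e^2))$, and the argument simplifies as $\frac{r}{4^d e^2}=\frac{2^{4d}}{4^d e^2\pi^{2d}\mu^4}=\frac{2^{2d}}{e^2\pi^{2d}\mu^4}$, producing exactly the right-hand side of \eqref{mult_vol:est5}.

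There is no real obstacle here; the entire corollary is a bookkeeping exercise once Lemma 4.3 is established. The only mild point to be careful about is ensuring $r\le 1$ so that the version of Lemma 4.3 phrased in terms of $H_d$ is applicable (for $d\ge 2$ and $r\le 1$, both $r$ and $r/(4^d e^2)$ lie in the regime $(0,e^{-(d-1)})$ where $H_d$ is monotone, so the substitution preserves the inequalities). All the analytic effort sits in the volume estimate of Lemma 4.3; the corollary just translates that estimate from the geometric language of $\prod(1-y_k^2)$ into the probabilistic tail language of the envelope $\Omega$.
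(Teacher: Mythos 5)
Your proof is correct and takes essentially the same route as the paper: recognize that the hypothesis $\mu\ge 2^d/\pi^{d/2}$ gives $r:=2^{4d}/(\pi^{2d}\mu^4)\le 1$, that $\varrho(\cS_\mu)=v_d(r)$, and then substitute into the two-sided bound of Lemma \ref{mult:volume}. The parenthetical about monotonicity of $H_d$ is unnecessary and slightly off (for $d=2$, $r\le 1$ does not force $r<e^{-(d-1)}$): you are not transporting an inequality through $H_d$, only evaluating the two-sided bound of Lemma \ref{mult:volume} at a particular admissible $r$, so no monotonicity argument is required.
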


We proceed to generalize estimates in Lemmas \ref{lemma:end_set} and \ref{lemma:imp_integral1} for multi-dimensional setting. 


\begin{lemma}
\label{mult:end_set}
For $d\ge 2$, $ \mu \ge \dfrac{2^d}{{\pi}^{d/2}}$, there holds 
\begin{align}
\label{mult_vol:est6}
\int_{\cS_{\mu}}  |\Omega(\by)|^2 d\varrho \le\,   {\frac{2^{4d}(d+1)}{ \pi^{2d} (d-1)! }\cdot \frac{\(\log\({e} {\pi^{d}\mu^2} \)\)^{d-1}}{ \mu^2} =  \frac{2^{4d}e(d+1)}{ \pi^{d} (d-1)! }\ H_d(\frac{1}{e\pi^d \mu^2}) . }   
\end{align}
\end{lemma}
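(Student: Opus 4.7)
The plan is to combine a layer-cake decomposition with the tail bound coming from Lemma \ref{mult:volume} (equivalently, the right-hand side of Corollary \ref{mult:prob_tail}), followed by iterated integration by parts. First, the layer-cake representation, together with the observation that $|\Omega(\by)| \ge \mu$ on $\cS_\mu$, combined with the substitution $t = s^2$, yields
\begin{align*}
\int_{\cS_\mu} |\Omega(\by)|^2\, d\varrho \;=\; \mu^2 \varrho(\cS_\mu) \;+\; \int_\mu^\infty 2s\, \varrho(\cS_s)\, ds.
\end{align*}
Since $\cS_s = \{\by \in \cU : \prod_k (1-y_k^2) \le 2^{4d}/(\pi^{2d} s^4)\}$, the upper bound in Lemma \ref{mult:volume} gives, with the shorthand $\mu_0(s) := e\pi^d s^2 / 2^d$,
\begin{align*}
\varrho(\cS_s) \;\le\; \frac{2^{4d}}{(d-1)!\,\pi^{2d}\,s^4}\, \log^{d-1}(\mu_0(s)).
\end{align*}

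The remaining task is to estimate $I_{d-1} := \int_\mu^\infty \log^{d-1}(\mu_0(s))/s^3\, ds$. Using the identity $\frac{d}{ds}\log^n \mu_0(s) = (2n/s)\log^{n-1}\mu_0(s)$, integration by parts yields the recursion $I_n = \log^n\mu_0(\mu)/(2\mu^2) + n I_{n-1}$ with base case $I_0 = 1/(2\mu^2)$, which unfolds to
\begin{align*}
I_{d-1} \;=\; \frac{(d-1)!}{2\mu^2}\sum_{i=0}^{d-1}\frac{\log^i \mu_0(\mu)}{i!}.
\end{align*}
Multiplying by the appropriate constants and absorbing one copy of the $i=d-1$ term into the boundary piece $\mu^2\varrho(\cS_\mu)$ yields
\begin{align*}
\int_{\cS_\mu} |\Omega(\by)|^2\, d\varrho \;\le\; \frac{2^{4d}}{\pi^{2d}\mu^2}\left[\frac{2\log^{d-1}\mu_0(\mu)}{(d-1)!} + \sum_{i=0}^{d-2}\frac{\log^i\mu_0(\mu)}{i!}\right].
\end{align*}

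The main obstacle is collapsing this partial Taylor polynomial into a single $\log^{d-1}$ term; a naive bound of the type $\sum L^i/i! \le e^L$ would be far too crude here. The key observation is that the hypothesis $\mu \ge 2^d/\pi^{d/2}$ forces $\log(e\pi^d \mu^2) \ge 1 + 2d\log 2 > d-1$. Setting $L_1 := \log(e\pi^d\mu^2) \ge \log\mu_0(\mu)$, the ratio $L_1/(i+1) \ge 1$ for every $0 \le i \le d-2$, so the terms $L_1^i/i!$ are nondecreasing on this range; hence the sum of $d-1$ such terms is bounded by $(d-1)\, L_1^{d-2}/(d-2)! \le L_1^{d-1}/(d-2)!$. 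Substituting and invoking the elementary identity $\tfrac{2}{(d-1)!} + \tfrac{1}{(d-2)!} = \tfrac{d+1}{(d-1)!}$ produces exactly the stated bound, and the equivalent form involving $H_d\bigl(1/(e\pi^d\mu^2)\bigr)$ is then immediate from the definition $H_d(\beta) = \beta \log^{d-1}(1/\beta)$.
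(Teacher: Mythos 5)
Your proof is correct and follows essentially the same route as the paper's. Both start from the layer-cake decomposition $\int_{\cS_\mu}|\Omega|^2\,d\varrho = \mu^2\varrho(\cS_\mu) + \int_{\mu^2}^\infty\varrho(\cS_{\sqrt t})\,dt$ (your $t=s^2$ substitution is just a reparametrization), both invoke the upper bound of Lemma \ref{mult:volume}/Corollary \ref{mult:prob_tail}, and both evaluate the tail integral by the identical iterated integration by parts, arriving at the same expression $\frac{2^{4d}}{\pi^{2d}\mu^2}\bigl[\frac{L^{d-1}}{(d-1)!}+\sum_{k=0}^{d-1}\frac{L^k}{k!}\bigr]$ (you carry the slightly sharper argument $\mu_0(s)=e\pi^d s^2/2^d$ a bit longer, but that is a cosmetic tightening you discard at the end). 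The only genuine difference is the final step collapsing the partial Taylor polynomial: the paper notes directly that $(d-1)!/k! \le L^{d-1-k}$ (a product of $d-1-k$ factors each at most $d-1 < L$), so every summand is $\le L^{d-1}$ and the sum is $\le d\,L^{d-1}$; you instead observe the summands $L^i/i!$ are nondecreasing for $i\le d-2$ because $L\ge d-1$, bound the truncated sum by $(d-1)\,L^{d-2}/(d-2)! \le L^{d-1}/(d-2)!$, and use the identity $2/(d-1)!+1/(d-2)!=(d+1)/(d-1)!$. Both arguments hinge on exactly the same inequality $L>d-1$ guaranteed by $\mu\ge 2^d/\pi^{d/2}$, and both yield the identical constant $(d+1)$; yours is a small algebraic variant rather than a different method.
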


\begin{proof}
We have 
\begin{align*}
\int_{\cS_{\mu}}  |\Omega(\by)|^2 d\varrho & = \int_{0}^{\infty} \varrho(\by\in \cS_{\mu}: |\Omega(\by)|^2 \ge t) dt = \int_0^{\infty}\varrho (\cS_{\mu} \cap \cS_{\sqrt{t}}) dt
\\
& = \int_0^{\mu^2} \varrho (\cS_{\mu} ) dt + \int_{\mu^2}^{\infty} \varrho (\cS_{\sqrt{t}} ) dt. 
\end{align*}
Applying the upper estimates of $\varrho(\cS_{\mu})$ and $\varrho(\cS_{\sqrt{t}})$ in Corollary \ref{mult:prob_tail}, there follows 
\begin{align}
\int_{\cS_{\mu}}  & |\Omega(\by)|^2 d\varrho  \le   \frac{2^{4d}}{ \pi^{2d} (d-1)! } \left(  \frac{\left(\log\({{e} {\pi^{d}\mu^2}} \)\right)^{d-1}}{ \mu^2}    + \int_{\mu^2}^{\infty} \frac{\left(\log\({{e} {\pi^{d}t}} \)\right)^{d-1}}{ t^2}   dt \right) \notag
\\
& = \frac{2^{4d}}{ \pi^{2d} (d-1)! } \left(  \frac{\left(\log\({{e} {\pi^{d}\mu^2}} \)\right)^{d-1}}{ \mu^2}    + \sum_{k=0}^{d-1} \frac{(d-1)!}{k!}\cdot \frac{\left(\log\({{e} {\pi^{d}\mu^2} }\)\right)^{k}}{ \mu^2}    \right). \label{mult_vol:est7}
\end{align}
For $\mu \ge 1$, observe that $d< \log(e\pi^d \mu^2) $, thus $\frac{(d-1)!}{k!} \le \left(\log\({e} {\pi^{d}\mu^2} \)\right)^{d-1-k},\, \forall 0\le k \le d-1$. Substituting these estimates to \eqref{mult_vol:est7} yields \eqref{mult_vol:est6}.    
\end{proof}

\begin{lemma}
\label{lemma:mult_integral}
Let $d\ge 2$, there exists a universal constant $C > 0$ such that for all $\beta \ge 10$ and $\tau \in \{0,\frac14,\frac12\}$,  
\begin{align}
 \label{eq:mult_integral}
 & \int_{\cU} \frac{\exp\( - {\beta}\prod_{k=1}^d{(1-y_k^2)^{1/2}}\)}{\prod_{k=1}^d{(1-y_k^2)^{\tau}}} d\varrho
 \le C \,  {\beta^{2\tau -2}}(\log(\beta))^{d-1} .
\end{align}
\end{lemma}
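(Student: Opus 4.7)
The plan is to extend the one-dimensional argument of Lemma \ref{lemma:imp_integral1}, replacing the elementary bound $\varrho(|y|\ge\sqrt{1-u})\le u$ used there with the multivariate volume estimate
\begin{equation*}
v_d(u) \le \frac{u}{(d-1)!}\bigl(\log(2^d e/\sqrt u)\bigr)^{d-1}
\end{equation*}
furnished by Lemma \ref{mult:volume}. Setting $U(\by):=\prod_{k=1}^d(1-y_k^2)$ and $g(\by):=\exp(-\beta\sqrt{U(\by)})/U(\by)^{\tau}$, note that $g$ depends only on $U$ and is a decreasing function of it, so $\{g\ge t\}=\{U\le f^{-1}(t)\}$ where $f(u):=\exp(-\beta\sqrt u)/u^{\tau}$. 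The layer-cake identity, together with the definition $\varrho(U\le r)=v_d(r)$, therefore yields
\begin{equation*}
\int_{\cU} g\,d\varrho \;=\; f(1) \,+\, \int_{f(1)}^{\infty} v_d(f^{-1}(t))\,dt \;=\; e^{-\beta} \,-\, \int_0^1 v_d(u)f'(u)\,du.
\end{equation*}

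Since $-f'\ge 0$ on $(0,1)$, I may insert the upper bound on $v_d$ above and then integrate by parts. Denoting the right-hand side of the $v_d$-estimate by $w(u)$, one computes $w'(u)\le (\log(2^d e/\sqrt u))^{d-1}/(d-1)!$ (the correction term has the opposite sign). The boundary terms are benign: at $u=1$ we produce a non-positive contribution of order $e^{-\beta}$, while at $u=0$ the product $w(u)f(u)\sim u^{1-\tau}(\log(1/u))^{d-1}$ vanishes for all three admissible values of $\tau\in\{0,1/4,1/2\}$. Hence
\begin{equation*}
\int_{\cU} g\,d\varrho \;\le\; \frac{1}{(d-1)!}\int_0^1 \bigl(\log(2^d e/\sqrt u)\bigr)^{d-1}\frac{\exp(-\beta\sqrt u)}{u^{\tau}}\,du.
\end{equation*}
The same substitution $v=\beta\sqrt u$ that appeared in Lemma \ref{lemma:imp_integral1} then factors out the desired $\beta^{2\tau-2}$, leaving
\begin{equation*}
\int_{\cU} g\,d\varrho \;\le\; \frac{2\beta^{2\tau-2}}{(d-1)!}\int_0^{\beta}\bigl(\log(2^d e\beta/v)\bigr)^{d-1} e^{-v} v^{1-2\tau}\,dv.
\end{equation*}

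The main technical step, and the only place where care is required, is to bound the remaining integral by $C(\log\beta)^{d-1}$ uniformly for $\beta\ge 10$. I would split the $v$-integration at $v=1$. On $[1,\beta]$ the logarithm is at most $d\log 2+1+\log\beta\le C_d \log\beta$, while $\int_1^{\infty}e^{-v}v^{1-2\tau}\,dv\le \Gamma(2-2\tau)$, giving a contribution of order $(\log\beta)^{d-1}$. On $[0,1]$ I would use the elementary inequality $(a+b)^{d-1}\le 2^{d-2}(a^{d-1}+b^{d-1})$ to separate the log factor into $(\log(2^d e\beta))^{d-1}+(\log(1/v))^{d-1}$; the first piece produces $(\log\beta)^{d-1}\int_0^1 v^{1-2\tau}dv$, and the second, via the substitution $v=e^{-w}$, reduces to a Gamma-type integral $\Gamma(d)/(2-2\tau)^{d}$ that depends only on $d$. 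Both pieces are absorbed into $C(\log\beta)^{d-1}$ using $\beta\ge 10$, which completes the bound. The only genuine obstacle is keeping track of the $d$-dependent factors $(d-1)!$, $2^d$, and constants arising from the integration-by-parts boundary terms, but since the conclusion allows an absolute constant $C$ (and the bound is only needed inside asymptotic estimates with $d$ fixed), this is a matter of bookkeeping rather than analysis.
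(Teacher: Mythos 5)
Your route is genuinely different from the paper's. The paper proceeds by Fubini and a domain split: the $y_1$-integral over the central box $\cB_r=[-r,r]^{d-1}$ (in the remaining variables) is handled by the one-dimensional Lemma~\ref{lemma:imp_integral1}, the complement $\cB_1\setminus\cB_r$ is controlled trivially via $\exp(\cdot)\le 1$, and the two contributions are balanced by the choice $r=1-2/\beta^2$. Your plan instead applies the layer-cake identity directly in $d$ dimensions, replaces the elementary tail bound by the volume estimate of Lemma~\ref{mult:volume}, and integrates by parts --- a cleaner generalization of the one-dimensional proof, and the steps through the substitution $v=\beta\sqrt u$ are all correct (the monotonicity of $w$, the sign of the $u=1$ boundary term, and the vanishing at $u=0$ all check out).

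There is, however, a genuine gap in the final step that is not mere bookkeeping. The paper explicitly insists that $C$ be a universal constant (it carefully arranges the $(d-1)C_\tau^d$ term against $(\log\beta)^{d-1}$ using $(d-1)^{1/(d-1)}C_\tau<\log 10$), and the downstream lemmas track $d$-dependence explicitly, so a $d$-dependent constant here would propagate into the stated $C_{d,i}$. Your use of $(a+b)^{d-1}\le 2^{d-2}(a^{d-1}+b^{d-1})$ on $[0,1]$ produces the quantity
\[
\frac{2^{d-1}\bigl(\log(2^de\beta)\bigr)^{d-1}}{(d-1)!}\cdot\frac{1}{2-2\tau},
\]
which you must compare with $(\log\beta)^{d-1}$. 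With $\beta=10$ fixed and $d\to\infty$ this ratio behaves like a constant times $\bigl(2e\log 2/(\cdot)\bigr)^{d-1}$ over $(\log 10)^{d-1}$, i.e.\ it \emph{grows exponentially in $d$}; the factor $1/(d-1)!$ absorbs $(\log(2^de\beta))^{d-1}$ but not the extra $2^{d-1}$. So the constant you obtain is not universal. The fix is to avoid the lossy two-term split: expand $(L-\log v)^{d-1}=\sum_k\binom{d-1}{k}L^{d-1-k}(-\log v)^k$ and integrate term by term, which eliminates the $2^{d-2}$; one then still needs a Poisson/Gamma-tail estimate to show $\sum_{j<d}L^j/j!\lesssim(\log\beta)^{d-1}$ uniformly in $d$ and $\beta\ge 10$. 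This is precisely the delicate balancing the paper performs in a different guise, and it is the analytical core of the lemma rather than a matter of bookkeeping.
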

\begin{proof}
Denote $\cB_r = [-r,r]^{d-1}$ for $0<r < 1$, and represent $\varrho$ as $\varrho = \prod_{k=1}^d \varrho_k$, where $\varrho_k$ is the univariate uniform measure on $[-1,1]$. First, for any $r\in (0,1)$, 
{
\allowdisplaybreaks
\begin{align*}
&\int_{\cU} \frac{\exp\( - {\beta}\prod_{k=1}^d{(1-y_k^2)^{1/2}}\)}{\prod_{k=1}^d{(1-y_k^2)^{\tau}}} d\varrho 
\\
 &\quad = \int_{\cB_r\, \cup \, (\cB_1 \setminus \cB_r)} \left( \int_{-1}^1  \frac{\exp\( - {\beta}\prod_{k=1}^d{(1-y_k^2)^{1/2}}\)}{\prod_{k=1}^d{(1-y_k^2)^{\tau}}}  d\varrho_1 \right) d\varrho_2\ldots d\varrho_{d}
 \\
& \quad = \int_{\cB_r}\prod_{k=2}^d{(1-y_k^2)^{- \tau}} \left( \int_{-1}^1 \frac{\exp\( - {\beta}\prod_{k=1}^d{(1-y_k^2)^{1/2}}\)}{(1-y_1^2)^{\tau}} d\varrho_1 \right) d\varrho_2\ldots d\varrho_{d}
\\
& \qquad  + \int_{\cB_1 \setminus \cB_r}\prod_{k=2}^d{(1-y_k^2)^{- \tau}} \left( \int_{-1}^1 \frac{\exp\( - {\beta}\prod_{k=1}^d{(1-y_k^2)^{1/2}}\)}{(1-y_1^2)^{\tau}} d\varrho_1 \right) d\varrho_2\ldots d\varrho_{d}
\\
& \quad \le C \beta^{2\tau-2} \int_{ \cB_r}\prod_{k=2}^d{(1-y_k^2)^{- 1}}  d\varrho_2\ldots d\varrho_{d} 
\\
& \qquad +  \int_{-1}^1{(1-y_1^2)^{-\tau}} d\varrho_1   \int_{\cB_1 \setminus \cB_r}\prod_{k=2}^d{(1-y_k^2)^{- \tau}} d\varrho_2\ldots d\varrho_{d}
\\
& \quad = C \beta^{2\tau-2} \left(\int_{0}^r \frac{dy}{1-y^2}\right)^{d-1} + \int_0^1 \frac{dy}{(1-y^2)^{\tau}} 
\\
& \qquad\qquad \qquad\qquad\qquad \times \left[ \left(\int_0^1 \frac{dy}{(1-y^2)^{\tau}} \right)^{d-1} -  \left( \int_0^r \frac{dy}{(1-y^2)^{\tau}}\right)^{d-1} \right] 
\\
& \quad \le C \beta^{2\tau-2} \left(\int_{0}^r \frac{dy}{1-y^2}\right)^{d-1} + (d-1) \left(\int_0^1 \frac{dy}{(1-y^2)^{\tau}} \right)^{d-1}  \int_r^1 \frac{dy}{(1-y^2)^{\tau}}, 
\end{align*}
}
where the first inequality is derived from Lemma \ref{lemma:imp_integral1} and $\exp\( - {\beta}\prod\limits_{k=1}^d{(1-y_k^2)^{\frac{1}{2}}}\) \le 1$. 
Define the constants $C_\tau = 1,\, _2F_1(\frac{1}{4},\frac{1}{2};\frac{3}{2};1)\  (\simeq 1.2)$ and $\frac{\pi}{2}$, respectively for $\tau = 0,\, \frac14$, and $\frac12 $, where $_2F_1$ is the hypergeometric function, observe that 
\begin{align*}
&\int_{0}^r \frac{dy}{1-y^2}  = \frac{1}{2} \log\(\frac{1+y}{1-y}\)\biggl |_{y=0}^r = \frac{1}{2} \log\(\frac{1+r}{1-r}\),
\\
& \int_{r}^1 \frac{dy}{(1-y^2)^{\tau}} =
\begin{cases}
1 - r,\ &\text{ if }\tau = 0,
\\
(y\, _2F_1(\frac14,\frac12;\frac32;y^2))|_{y=r}^1 {\, \le C_{\tau} (1-r)^{3/4}},\ &\text{ if }\tau= 1/4,
\\
\pi/2 - \arcsin(r) {\, \le \frac{\pi}{2}\sqrt{1-r}},\ &\text{ if }\tau = {1/2}.\ 
\end{cases}
\end{align*}
Therefore, for $\tau \in \{0,1/4,1/2\}$ and $0<r < 1$, we have
\begin{gather}
\label{mul:est1}
\begin{aligned}
&\int_{\cU} \frac{\exp\( - {\beta}\prod_{k=1}^d{(1-y_k^2)^{1/2}}\)}{\prod_{k=1}^d{(1-y_k^2)^{\tau}}} d\varrho 
\\
&\qquad \qquad \qquad \le  C \frac{\beta^{2\tau -2}}{2^{d-1}}\(\log\(\frac{1+r}{1-r}\)\)^{d-1} +  (d-1) C_{\tau} ^{d}  (1-r)^{1-\tau}. 
\end{aligned}
\end{gather}
Choose $r= 1 - \frac{2}{\beta^2 }  \in (0,1)$, then 
\begin{align}
\label{mul:est2}
(1-r)^{1-\tau} = {2^{1-\tau}\beta^{2\tau -2}  },\ \ \ \log\(\frac{1+r}{1-r}\) \le \log\(\frac{2}{1-r}\) = 2\log(\beta). 
\end{align}   
Substituting \eqref{mul:est2} to \eqref{mul:est1} yields 
\begin{align*}
& \int_{\cU} \frac{\exp\( - {\beta}\prod_{k=1}^d{(1-y_k^2)^{1/2}}\)}{\prod_{k=1}^d{(1-y_k^2)^{\tau}}} d\varrho 
\le  C {\beta^{2\tau -2}}\(\log(\beta)\)^{d-1} +  (d-1)2^{1-\tau}  C_{\tau} ^{d} \beta^{2\tau -2}  \notag
\\
\le\, &  C {\beta^{2\tau -2}}\(\log(\beta)\)^{d-1} +  2^{1-\tau} C_{\tau} ((d-1)^{\frac{1}{d-1}} C_{\tau} )^{d-1} \beta^{2\tau -2}
\le\,   C\,  {\beta^{2\tau -2}}(\log(\beta))^{d-1} , 
\end{align*}
as desired, where the last inequality comes from observation $(d-1)^{\frac{1}{d-1}} C_{\tau} < \log(10) \le \log(\beta)$. 
\end{proof}

The next lemma establishes that preferable sets according to $\gamma_0$ have their test values bounded from above by $C \left(\dfrac{m}{\gamma_0}\right)^{1/4} \left(\dfrac{4}{\pi}\right)^{2d}  \left(\log\left(\dfrac{\pi^d}{4^{d}}\cdot \sqrt{\dfrac{m}{\gamma_0}}\right)\right)^{d-1}$.

\begin{lemma}
\label{lemmaM:good_set}
For $m>0$, $\gamma_0 \in (0,1)$ satisfying $\gamma_0 \le \frac{m}{20^2} \left(\frac{\pi}{4}\right)^{2d}$, let $\by_1,\by_2,\ldots,\by_m$ be sampling points drawn independently from the uniform distribution on $[-1,1]^d$ and  
$$
Z_i =  { \Omega(\by_i)} \exp\(-\dfrac{1}{2 \Omega^2(\by_i)}  \sqrt{\dfrac{m}{\gamma_0}} \),\quad \forall i \in \{1,\ldots,m\}. 
$$
be $m$ i.i.d random variables. With probability exceeding $1-\gamma_0$, there holds 
\begin{align}
\label{mult:est4}
 \sum_{i=1}^m Z_i \le  C \left(\frac{m}{\gamma_0}\right)^{1/4} \left(\frac{4}{\pi}\right)^{2d}  \left(\log\left(\frac{\pi^d}{4^{d}}\cdot \sqrt{\frac{m}{\gamma_0}}\right)\right)^{d-1} . 
 \end{align}
\end{lemma}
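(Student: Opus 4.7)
The plan is to mimic the proof of Lemma \ref{lemma:good_set} in the univariate setting, but now relying on the multivariate integral estimate Lemma \ref{lemma:mult_integral} in place of Lemma \ref{lemma:imp_integral1}. The overall structure is: (i) bound $\E Z_i$ and $\E Z_i^2$ by recognizing them as integrals of the form handled by Lemma \ref{lemma:mult_integral}; (ii) apply Chebyshev's inequality to $\sum_i Z_i$; (iii) combine the bounds and simplify into the desired form.

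More concretely, substituting the closed form $\Omega^2(\by) = \tfrac{2^{2d}}{\pi^d \prod_k (1-y_k^2)^{1/2}}$ into the definition of $Z_i$, I would rewrite
\begin{align*}
\E Z_i &= \frac{2^d}{\pi^{d/2}} \int_{\cU} \frac{\exp\!\left(-\beta \prod_{k=1}^d (1-y_k^2)^{1/2}\right)}{\prod_{k=1}^d (1-y_k^2)^{1/4}} \, d\varrho, \\
\E Z_i^2 &= \frac{2^{2d}}{\pi^{d}} \int_{\cU} \frac{\exp\!\left(-2\beta \prod_{k=1}^d (1-y_k^2)^{1/2}\right)}{\prod_{k=1}^d (1-y_k^2)^{1/2}} \, d\varrho,
\end{align*}
where $\beta = \tfrac{\pi^d}{2^{2d+1}}\sqrt{m/\gamma_0}$. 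The condition $\gamma_0 \le \tfrac{m}{400}(\pi/4)^{2d}$ is there exactly so that $\beta \ge 10$ and Lemma \ref{lemma:mult_integral} is applicable. Using that lemma with $\tau = 1/4$ for $\E Z_i$ and $\tau = 1/2$ for $\E Z_i^2$, I obtain
\begin{align*}
\E Z_i &\le C\,\frac{4^{2d}}{\pi^{2d}}\Bigl(\tfrac{\gamma_0}{m}\Bigr)^{3/4}\bigl(\log \beta\bigr)^{d-1}, \\
\E Z_i^2 &\le C\,\frac{4^{2d}}{\pi^{2d}}\Bigl(\tfrac{\gamma_0}{m}\Bigr)^{1/2}\bigl(\log \beta\bigr)^{d-1}.
\end{align*}

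Next I apply Chebyshev's inequality to the i.i.d.\ sum $\sum_{i=1}^m Z_i$ with threshold $\lambda = \sqrt{m/\gamma_0}\,(\E Z_i^2)^{1/2}$, which gives, with probability at least $1-\gamma_0$,
\begin{equation*}
\sum_{i=1}^m Z_i \;\le\; m\,\E Z_i \,+\, \sqrt{\tfrac{m}{\gamma_0}}\,(\E Z_i^2)^{1/2}.
\end{equation*}
Substituting the two bounds above, the first term becomes $C\gamma_0 (m/\gamma_0)^{1/4}(4/\pi)^{2d}(\log\beta)^{d-1}$ and the second becomes $C(m/\gamma_0)^{1/4}(4/\pi)^{d}(\log\beta)^{(d-1)/2}$. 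Since $\gamma_0 < 1$, $(4/\pi)^d \le (4/\pi)^{2d}$, and the condition on $\gamma_0$ forces $\log\beta \ge 1$ so $(\log\beta)^{(d-1)/2}\le(\log\beta)^{d-1}$, both terms are absorbed into a single bound of the form stated, after noting $\log\beta$ and $\log\bigl(\tfrac{\pi^d}{4^d}\sqrt{m/\gamma_0}\bigr)$ differ only by an additive constant (factor of $2$ inside the log).

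The main (very minor) obstacle is bookkeeping the constants $4^{2d}/\pi^{2d}$ and ensuring the logarithm written in the claimed bound matches the logarithm produced by Lemma \ref{lemma:mult_integral}; this amounts to verifying that $\beta$ is large enough under the hypothesis $\gamma_0 \le \tfrac{m}{400}(\pi/4)^{2d}$ for the lemma's $\beta\ge 10$ hypothesis to apply, and for the final $(\log\beta)^{d-1}$ factor to dominate the lower-order $(\log\beta)^{(d-1)/2}$ contribution from the variance term. Everything else is straightforward substitution and simplification.
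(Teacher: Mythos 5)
Your proposal is correct and matches the paper's proof essentially step for step: both reduce $\E Z_i$ and $\E Z_i^2$ to the integral in Lemma \ref{lemma:mult_integral} with $\tau=1/4$ and $\tau=1/2$ respectively (using the hypothesis on $\gamma_0$ to verify $\beta\ge 10$), then apply Chebyshev's inequality with threshold $\lambda=\sqrt{m/\gamma_0}\,(\E|Z|^2)^{1/2}$ and absorb both resulting terms into the stated bound.
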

\begin{proof}
By Lemma \ref{lemma:mult_integral}, 
\begin{gather}
\label{mult:est5}
\begin{aligned}
&\E Z_i \le \, C \left(\frac{4}{\pi}\right)^{2d} \left(\frac{\gamma_0}{m}\right)^{3/4} \left(\log\left(\frac{\pi^d}{2^{2d+1}}\cdot \sqrt{\frac{m}{\gamma_0}}\right)\right)^{d-1},
\\
&\text{Var}(Z_i) \le \E |Z_i|^2 \le  C \left(\frac{4}{\pi}\right)^{2d} \left(\frac{\gamma_0}{m}\right)^{1/2} \left(\log\left(\frac{\pi^d}{2^{2d}}\cdot \sqrt{\frac{m}{\gamma_0}}\right)\right)^{d-1}. 
\end{aligned}
\end{gather}
Referring $\E Z_i$ and $\text{Var}(Z_i)$ as $\E Z$ and $\text{Var}(Z)$, we apply Chebyshev inequality to obtain with probability exceeding $1-\gamma_0$: 
\begin{gather}
\label{mult:est6}
\begin{aligned}
 \sum_{i=1}^m Z_i  & \le m \E Z  +    \sqrt{\frac{m}{\gamma_0}} (\E |Z|^2)^{1/2}.
\end{aligned}
\end{gather}
Substituting \eqref{mult:est5} to \eqref{mult:est6} yields 
\begin{align*}
 \sum_{i=1}^m Z_i  & \le C\, m^{1/4} \gamma_0^{3/4 }  \left(\frac{4}{\pi}\right)^{2d}  \left(\log\left(\frac{\pi^d}{4^{d}}\cdot \sqrt{\frac{m}{\gamma_0}}\right)\right)^{d-1}  \notag 
 \\
 & \qquad\qquad \qquad +     C \left(\frac{m}{\gamma_0}\right)^{1/4} \left(\frac{4}{\pi}\right)^{d}  \left(\log\left(\frac{\pi^d}{4^{d}}\cdot \sqrt{\frac{m}{\gamma_0}}\right)\right)^{\frac{d-1}{2}} 
 \\
 & \le   C \left(\frac{m}{\gamma_0}\right)^{1/4} \left(\frac{4}{\pi}\right)^{2d}  \left(\log\left(\frac{\pi^d}{4^{d}}\cdot \sqrt{\frac{m}{\gamma_0}}\right)\right)^{d-1} . 
\end{align*}
\end{proof}

Recall $
\cE_{s,\alpha} :=  \partial B_2 \cap C(s;\alpha),
$
our last lemma in this section involves an estimate of covering number for $\cE_{s,\alpha}$. This is an extension of Lemma \ref{note:lemma4} to the setting where $(\cU,\varrho)$ is multi-dimensional sample space. 

\begin{lemma}
\label{mult:lemma4}
For { $0 < \varsigma < 1$}, $\mu >0$, $m>0$ and $0 < \gamma_0 < \min\{1, \frac{m}{20^2} \left(\frac{\pi}{4}\right)^{2d}\}$, there 
exists a set $D \subset \R^N$ such that the following hold:
\begin{enumerate}
\item[(i)] If $Q = \{\by_1,\ldots,\by_m\}$ is a set of $m$ i.i.d. samples drawn from $(\cU,\varrho)$, preferable according to $\gamma_0$, then for all $\bz\in \cE_{s,\alpha}$, there exists $\bz' \in D$ depending on $\bz$ and $Q$ satisfying 
\begin{align}
&|\psi(\by, \bm{z} - \bm{z}')| \le \mu \quad\mbox{for all }\by \text{  in a subset }   \cU^\star \text{ of } \cU\mbox{ with }\varrho(\cU^\star) > { 1-\varsigma} ,  \label{mult:metric1}
\\
& |\psi(\by_i, \bm{z} - \bm{z}')| \le \mu\quad \mbox{for all }\by_i \in Q. \label{mult:metric2}
\end{align}

\item[(ii)] The cardinality of $D$ satisfies 
{
\begin{gather}
\label{mult:card}
\begin{aligned}
\log(\#(D)) \le\,  & C \frac{ (1+\alpha)^2 s}{\mu^2} \log(2N) \max\biggl\{\left(\frac{4}{\pi}\right)^d \left[ K_d(\frac{2^{d-1}\varsigma}{C}) \right]^{-\frac12} , 
\\
& \qquad \qquad \qquad \sqrt{\frac{m}{\gamma_0}} \left(\frac{4}{\pi}\right)^{2d}  \left(\log\left(\frac{\pi^d}{4^{d}}\cdot \sqrt{\frac{m}{\gamma_0}}\right)\right)^{2d-2}\biggl\} .
\end{aligned}
\end{gather}
 }
 
\end{enumerate}
\end{lemma}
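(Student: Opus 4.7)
\medskip

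\noindent\textbf{Proof plan for Lemma~\ref{mult:lemma4}.}
The plan is to mirror the strategy of Lemma~\ref{note:lemma4}, using the empirical method of Maurey and replacing each univariate envelope integral by its multivariate counterpart from Lemma~\ref{lemma:mult_integral}. First, since $\cE_{s,\alpha}\subset (1+\alpha)\sqrt{s}\,B_1$, every $\bz\in\cE_{s,\alpha}$ can be written as $\bz=\sum_r \lambda_r \bv_r$ with $\bv_r\in\mathcal{P}=\{\pm(1+\alpha)\sqrt{s}\,\bm{e}_j\}$ and $\lambda_r\ge 0$ summing to $1$. I would draw i.i.d.\ samples $\bz_1,\ldots,\bz_M$ from this distribution, form $\overline{\bz}=\frac{1}{M}\sum_k\bz_k$, let $D$ be the set of all possible realizations of $\overline{\bz}$, and note $\#D\le(2N)^M$. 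The two target inequalities \eqref{mult:metric1}--\eqref{mult:metric2} must each hold for some $\overline\bz\in D$ with $\overline\lambda$-probability greater than $1/2$, after which a union bound gives a single $\overline\bz$ satisfying both.

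For \eqref{mult:metric1}, I would follow the univariate pattern: Hoeffding (Lemma~\ref{lemma:hoeffding}) applied to the bounded random variable $\psi(\by,\bz_k)$ yields, for fixed $\by$,
\[
\P_{\overline\bz}\bigl(|\psi(\by,\overline\bz-\bz)|\ge\mu\bigr)\le 2\exp\!\left(-\frac{M\mu^2}{2(1+\alpha)^2\Omega^2(\by)s}\right).
\]
Plugging in $\Omega^2(\by)=4^d/(\pi^d\prod_k(1-y_k^2)^{1/2})$ gives an exponent of $-\beta\prod_k(1-y_k^2)^{1/2}$ with $\beta=M\mu^2\pi^d/(2^{2d+1}(1+\alpha)^2 s)$. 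Integrating over $\cU$, Lemma~\ref{lemma:mult_integral} with $\tau=0$ controls the expected $\varrho$-measure of the ``bad'' set by $C\beta^{-2}(\log\beta)^{d-1}=C\,2^{-(d-1)}H_d(\beta^{-2})$. Demanding this be $\le\varsigma/3$ and solving through the inverse $K_d$ (which is valid on the given range by Lemmas~\ref{lemmaM:est}--\ref{lemmaM:supp}) translates, via Markov, into the requirement
\[
M\gtrsim \frac{(1+\alpha)^2 s}{\mu^2}\left(\frac{4}{\pi}\right)^{d}\bigl[K_d(2^{d-1}\varsigma/C)\bigr]^{-1/2},
\]
which is the first term inside the max in \eqref{mult:card}.

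For \eqref{mult:metric2}, I would use the symmetrization/sub-Gaussian tail argument as in the univariate proof. After bounding $\|(\psi(\by_i,\bz_k))_k\|_2\le\Omega(\by_i)(1+\alpha)\sqrt{Ms}$ and applying the union bound and the truncated integral identity, choosing $\kappa=(1+\alpha)\sqrt{Ms}(m/\gamma_0)^{1/4}$ produces an exponent matching $-\frac{1}{2\Omega^2(\by_i)}\sqrt{m/\gamma_0}$, so the tail sum is precisely $T(Q)=\sum_i Z_i$ from Definition~\ref{define:z}. The key step is now to invoke the preferability of $Q$: Lemma~\ref{lemmaM:good_set} gives
\[
T(Q)\le C\left(\frac{m}{\gamma_0}\right)^{1/4}\left(\frac{4}{\pi}\right)^{2d}\left(\log\!\left(\frac{\pi^d}{4^d}\sqrt{\frac{m}{\gamma_0}}\right)\right)^{d-1},
\]
which in turn bounds $\E_{\overline\bz}\max_i|\psi(\by_i,\bz-\overline\bz)|$ and, through Markov, forces
\[
M\gtrsim \frac{(1+\alpha)^2 s}{\mu^2}\sqrt{\frac{m}{\gamma_0}}\left(\frac{4}{\pi}\right)^{4d}\left(\log\!\left(\frac{\pi^d}{4^d}\sqrt{\frac{m}{\gamma_0}}\right)\right)^{2d-2},
\]
the second term in the max. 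Taking $M$ as the maximum of the two requirements above and using $\log(\#D)\le M\log(2N)$ yields \eqref{mult:card}.

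The routine parts are Hoeffding, symmetrization and the Markov step; the genuinely new obstacle compared to Lemma~\ref{note:lemma4} is handling the extra $(\log\beta)^{d-1}$ factor arising from the multivariate envelope integral, which forces the appearance of $H_d$ and its inverse $K_d$ in the cardinality bound. The other delicate point is ensuring the choice of $\kappa$ in the symmetrization step aligns exactly with the exponent $-\frac{1}{2\Omega^2(\by_i)}\sqrt{m/\gamma_0}$ used in defining preferability, so that the preferability inequality of Lemma~\ref{lemmaM:good_set} can be applied verbatim; the restriction $\gamma_0\le\frac{m}{400}(\pi/4)^{2d}$ is exactly what is needed there.
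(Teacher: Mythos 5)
Your proposal mirrors the paper's proof step for step: Maurey's empirical method on the polytope $\mathcal{P}=\{\pm(1+\alpha)\sqrt{s}\,\bm{e}_j\}$, Hoeffding combined with Lemma~\ref{lemma:mult_integral} at $\tau=0$ and the inverse function $K_d$ to obtain the first branch of the max, symmetrization plus the preferability bound of Lemma~\ref{lemmaM:good_set} (with $\kappa$ chosen so the Gaussian-tail exponent coincides with the one in the definition of $Z_i$) for the second branch, and a union bound over the two ``probability $>2/3$'' (your $>1/2$) events at the end.

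One small discrepancy worth flagging: your second branch produces $M\gtrsim \frac{(1+\alpha)^2 s}{\mu^2}\sqrt{m/\gamma_0}\,(4/\pi)^{4d}(\log\cdots)^{2d-2}$, whereas the lemma's stated bound in \eqref{mult:card} has $(4/\pi)^{2d}$. Tracing the constants, your exponent $4d$ is what actually follows: Lemma~\ref{lemmaM:good_set} bounds $T(Q)$ with a factor $(4/\pi)^{2d}$, and after the Markov step you must square the required lower bound on $\sqrt{M}$, giving $(4/\pi)^{4d}$. The paper's intermediate display asserts the $T(Q)$-term contributes only $(4/\pi)^d$, which does not match Lemma~\ref{lemmaM:good_set} as stated, so this appears to be a constant-tracking slip in the paper (propagating to \eqref{mult:card} and to $C_{d,1}$ in Theorem~\ref{mult:WREP_theorem}). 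Apart from this dimension-dependent constant, your argument is the same as the paper's and is correct.
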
 

\begin{proof}
The proof follows closely that of Lemma \ref{note:lemma4}, with all estimates involving the univariate envelope bound of Legendre polynomials replaced by those with the multivariate bound. For brevity, we only show the critical changes and refer the readers to Lemma \ref{note:lemma4} for the full arguments. The first change occurs in \eqref{est7b} where we now  have by Lemma \ref{lemma:mult_integral}  
\begin{align*}
 \int_D \left( \int_\cU \chi(\by,\overline\bz) d\varrho \right) d\overline{\lambda} & =  \int_\cU \left( \int_D \chi(\by,\overline\bz)d\overline{\lambda} \right) d\varrho
\le  \int_\cU  2 \exp\({-\frac {M \mu^2}{ 2(1+\alpha)^2  \Omega^2(\by) s }}\)    d\varrho 
\\
& \le    C \beta^{-2 }\left(\log\left(\beta\right)\right)^{d-1}. 
\end{align*}
Here, $\beta = \dfrac{M\mu^2}{2(1+\alpha)^2s}\left(\dfrac{\pi}{4}\right)^d$. Applying Markov's inequality, with probability exceeding 2/3, $\overline\bz\in D$ satisfies
\begin{align}
\label{mult:est7}
 \P_{\by}&\(\left| \psi(\by,\overline\bz - \bm{z}) \right| \!\ge\! \mu \) = 
 \int_\cU \chi(\by,\overline\bz) d\varrho  
 \le\,   C \beta^{-2 }\left(\log\left(\beta\right)\right)^{d-1} = { \frac{ C}{2^{d-1}} H_d(\frac{1}{\beta^2})}. 
\end{align}
From the definition of $K_d$, one has $\P_{\by} \(\left| \psi(\by,\overline\bz - \bm{z}) \right| \ge \mu \) \le \varsigma $, assuming the trivial condition { ${\varsigma} < C \(\dfrac{d-1}{2 e}\)^{d-1} $ and

\begin{align}
\label{mult:est8}
\beta \ge \left[ K_d\(\frac{2^{d-1}\varsigma}{C}\)\right]^{-\frac12}  ,\ \text{or equivalently,}\ \ M \ge \dfrac{ 2 (1+\alpha)^2  s}{\mu^2} \left(\frac{4}{\pi}\right)^d   \left[K_d \(\frac{2^{d-1}\varsigma}{C}\)\right]^{-\frac12}. 
\end{align}
}

Next, consider a preferable sample set $Q = \{\by_1,\ldots,\by_m\}$ according to $\gamma_0$. Similar to Lemma \ref{note:lemma4}, we have 
\begin{gather}
 \label{mult:est9}
\begin{aligned}
& \E_\epsilon \max_{\by_i\in Q} \left| \sum_{k=1}^M \epsilon_k  \psi(\by_i,\bz_k)  \right|  \le  \,  (1+\alpha) \sqrt{Ms} \sqrt[4]{\dfrac{m}{\gamma_0}} \, 
\\
& \qquad\qquad\qquad\qquad  + \,   \sqrt{2 \pi Ms} (1+ \alpha) \sum_{i=1}^m { \Omega(\by_i)} \exp\(-\frac{1}{2 \Omega^2(\by_i)}  \sqrt{\dfrac{m}{\gamma_0}} \).
\end{aligned}
\end{gather}
Applying Lemma \ref{lemmaM:good_set} gives
\begin{align*}
&\E_\epsilon \max_{\by_i\in Q} \left| \sum_{k=1}^M \epsilon_k  \psi(\by_i,\bz_k)  \right| \le  \,  C(1+\alpha) \sqrt{ Ms}  \sqrt[4]{\dfrac{m}{\gamma_0}} \left(\frac{4}{\pi}\right)^{d}  \left(\log\left(\frac{\pi^d}{4^{d}}\cdot \sqrt{\frac{m}{\gamma_0}}\right)\right)^{d-1} .
\end{align*}
Since $\E_{\overline\bz}\(\max_{\by_i\in Q} |\psi(\by_i,\bz-\overline\bz )| \) \le \frac{2}{M} \E_{\overline\bz} \E_\epsilon \max_{\by_i\in Q} \left| \sum_{k=1}^M \epsilon_k  \psi(\by_i,\bz_k)  \right|$, applying Markov's inequality, with probability exceeding $2/3$, $\overline \bz \in D$ satisfies
 $$
\max_{\by_i\in Q} |\psi(\by_i,\bz - \overline\bz)| \le C(1+\alpha) \sqrt{ \frac sM}  \sqrt[4]{\dfrac{m}{\gamma_0}} \left(\frac{4}{\pi}\right)^{d}  \left(\log\left(\frac{\pi^d}{4^{d}}\cdot \sqrt{\frac{m}{\gamma_0}}\right)\right)^{d-1}. 
$$ 
Thus, $\max_{\by_i\in Q} |\psi(\by_i,\bz - \overline\bz)| \le \mu $ provided that 
\begin{align}
\label{mult:est10}
M \ge  \dfrac{C (1+\alpha)^2 s\sqrt{m}}{\mu^2\sqrt{\gamma_0}}  \left(\frac{4}{\pi}\right)^{2d}  \left(\log\left(\frac{\pi^d}{4^{d}}\cdot \sqrt{\frac{m}{\gamma_0}}\right)\right)^{2d-2}. 
\end{align}

Finally, for $M$ satisfying \eqref{mult:est8} and \eqref{mult:est10}, there exists $\overline{\bz}\in D$ that fulfills both \eqref{mult:metric1} and \eqref{mult:metric2}. As $\log(\#(D))\le M\log(2N)$, the proof is concluded. 
\end{proof}

\subsection{Proof of Theorem \ref{mult:WREP_theorem}}
\label{mult:mainproof}

We conduct the analysis under assumption $\delta < 1/13$; this parameter will be rescaled at the end. The proof follows the same logic to that of Theorem \ref{spL:WREP_theorem}. For brevity, we only elaborate major changes specific to multi-dimensional setting and refer the readers to Theorem \ref{spL:WREP_theorem} for the rest of the arguments. 

 For {$m\in \mathbb{N}$}, let $Q = \{\by_1,\dots,\by_m\}$ be a set of samples where $\by_1,\ldots,\by_m$ are drawn independently from the probability space $(\cU,\varrho)$. Let $\Xi \in \mathbb{R}$ whose value will be set later to meet our need, we define the set of integers 
\begin{align} 
\cL =  \Z\cap \(\frac{\log(\delta)}{\log(1+\delta)}+1, 
\dfrac{\log(\Xi)}{\log(1+\delta)}+1\).
\label{mult:defineIndices}
\end{align}

\textbf{Step 1:} under the condition that $Q$ is a preferable sample set according to $\gamma_0$, by following the same procedure in Step 1, Section \ref{subsec:mainproof_1d}, with an application of Lemma \ref{mult:lemma4} instead of Lemma \ref{note:lemma4}, we can construct $\tilde{\psi}$ approximating $\psi$ that satisfies exactly three properties in the aforementioned step, except that the cardinality of class $F_l$ is now 
\begin{align}
\log(\#F_l) \le C \frac{(1+\alpha)^2 s}{\delta^3 (1 + \delta)^{2l-2} }   & \log(2N) \max\biggl\{ \notag \left(\frac{4}{\pi}\right)^d \left[K_d \left(\frac{2^{d-1}}{C}. \frac{ \varsigma {\log(1+\delta)}  }{ \log( \Xi / \delta)} \right)\right]^{-\frac12},  \notag
\\
& \sqrt{\frac{m}{\gamma_0}}  \left(\frac{4}{\pi}\right)^{2d}  \left(\log\left(\frac{\pi^d}{4^{d}}\cdot \sqrt{\frac{m}{\gamma_0}}\right)\right)^{2d-2}\biggl\}  .   \label{mult:eq2b}
\end{align}
Partitioning $\cU$ into $I,\overline{I},\underline{I}$ and $\cU'$ as in Section \ref{subsec:mainproof_1d}, then 
\begin{align} 
&\left(1-{3\delta}/{2}\right)\tilde{\psi}(\by,\bm{z})  < |{\psi}(\by,\bm{z})| <  \left(1+ {3\delta}/{2}\right)\tilde{\psi}(\by,\bm{z}) ,\ & \forall \by \in I, \label{mult:partition1}
\\
& 0\le |{\psi}(\by,\bm{z})| < 6\delta/5 \ \ \mbox{ and }\ \ \tilde{\psi}(\by,\bm{z}) = 0 ,\ & \forall \by \in \underline{I}, \label{mult:partition2}
\\
&  |\psi(\by,\bz)| > \Xi  \qquad \quad \ \  \ \mbox{ and }\ \ \tilde{\psi}(\by,\bm{z}) = 0 ,\ & \forall \by \in \overline{I}, \label{mult:partition2b}
\\
& \varrho(\cU' )\le \varsigma,\qquad\qquad \ \ \ \mbox{ and }\ \  Q\cap \cU' = \emptyset. &  \label{mult:partition3}
\end{align}
Let $\nu = \dfrac{\Xi}{(1+\alpha)\sqrt{s}}$, note that for all $\by \in \overline{I}$, $|\Omega(\by)| \ge \dfrac{|\psi(\by,\bz)|}{(1+\alpha)\sqrt{s}} > \nu$, thus by Corollary \ref{mult:prob_tail}, $\varrho(\overline{I}) \le\!  { \dfrac{2^{d+1} e^2}{(d-1)!} \, H_d\left({\dfrac{2^{2d}(1+\alpha)^4{s}^2}{e^2 \pi^{2d}\Xi^4}}\right)}$, assuming that $  \dfrac{\Xi}{(1+\alpha)\sqrt{s}}  \ge \(\dfrac{4}{\pi}\)^{d/2}$. 

\textbf{Step 2:} We derive estimates of $\|\bA \bz\|_2$ and $\| \bz\|_2$ in terms of $\tilde{\psi}(\cdot,\bz)$. Similarly to \eqref{spL:est6} and \eqref{spL:est7}, 
 \begin{align}
\|\bm{Az}\|^2_2 & \ge {(1-{3\delta})} \sum_{i=1}^m \frac {| \tilde{\psi}(\by_i,{\bm z})|^2}{m} . \label{mult:est13}
\end{align}

For $\|\bz\|_2$, we decompose
\begin{align}
\label{mult:est13b}
\|\bz\|_2^2 = \int_{I}\! |\psi(\by,\bz)|^2 d\varrho + \int_{\underline{I}}\! |\psi(\by,\bz)|^2 d\varrho + \int_{\overline{I}} \! |\psi(\by,\bz)|^2 d\varrho + \int_{\cU'} |\psi(\by,\bz)|^2 d\varrho. 
\end{align}

The first two RHS terms can be bounded in the same way as in \eqref{spL:est9} and \eqref{spL:est10}. For the third term, from Lemma \ref{mult:end_set} and the fact that $\overline{I}\subset \mathcal{S}_{\nu}$, 
\begin{gather}
\begin{aligned}
 \label{mult:est15b}
\int_{\overline{I}} |\psi(\by,\bz)|^2 d\varrho & \le (1+\alpha)^2 s  \int_{\mathcal{S}_{\nu}} |\Omega(\by)|^2 d\varrho 
\\
&  { \le  (1+\alpha)^2 s . \frac{2^{4d}e(d+1)}{ \pi^{d} (d-1)! } H_d\left(\frac{(1+\alpha)^2 s}{e\pi^d \Xi^2 }\right) 
 =  \frac{11\delta}{12} },
\end{aligned}
\end{gather}
if we set {$\Xi = \dfrac{(1+\alpha) \sqrt{s}}{\pi^{d/2} \sqrt{e} } \left[ K_d\left(\dfrac{11\delta}{12 (1+\alpha)^2 s}.\dfrac{  \pi^{d} (d-1)! }{2^{4d}e(d+1)}\right) \right]^{-\frac12} $}. Note that for $\Xi$ and $K_d$ to be well-defined, we need 
$ \dfrac{11\delta}{12 (1+\alpha)^2 s}.\dfrac{  \pi^{d} (d-1)! }{2^{4d}e(d+1)} < \(\dfrac{d-1}{e}\)^{d-1}$. It can be checked that this condition is trivial. Also, $\Xi$ defined above satisfies the condition $  \dfrac{\Xi}{(1+\alpha)\sqrt{s}}  \ge \(\dfrac{4}{\pi}\)^{d/2}$ posed in Step 1. 

For the last term, we have 
\begin{align}
\label{mult:est16}
\int_{\cU'} |\psi(\by,\bz)|^2 d\varrho  \le (1+\alpha)^2 s  \int_{\cU'} |\Omega(\by)|^2 d\varrho \le  { \frac{11\delta}{12}}.
\end{align}
if setting {$\varsigma  := \varrho(\cS_{\nu})$. From Corollary \ref{mult:prob_tail}, note that 
\begin{align}
 \varsigma &  >  \dfrac{1}{2^d (d-1)!} \, H_d\left(2^{4d} e^2 K_d^{2}\left(\dfrac{11\delta}{12 (1+\alpha)^2 s}.\dfrac{  \pi^{d} (d-1)! }{2^{4d}e(d+1)}\right)\right). \label{mult:est16b} 
\end{align}
}
Combining all above estimates yields
\begin{gather}
\label{mult:est17}
\begin{aligned}
&\|\bz\|_2^2 - \|\bA \bz\|_2^2  \le   \ \frac{34\delta}{3} +  (1 -  3\delta) \! \left( 
 \int_{\cU}\! |\tilde \psi(\by,\bz)|^2 d\varrho - \sum_{i=1}^m \frac{| \tilde{\psi}(\by_i,{\bm z})|^2}{m} \right).   
\end{aligned}
\end{gather}

\textbf{Step 3:} we derive a positive upper bound of $ \|\bz\|_2^2 - \|\bA\bz\|_2^2 $. Let $(\kappa_l)_{l\in\mathcal{L}}$ be a sequence of positive numbers. From \eqref{mult:est17}, analogously to Section \ref{subsec:mainproof_1d}, we can prove that with probability exceeding $1 - \gamma_0 - \sum_{l\in \mathcal{L}} \exp\left(- \frac{m \kappa_l \delta}{16e}\right) (\# F_l) $, $Q$ is preferable according to $\gamma_0$ \textit{and} for all $\bz\in \cE_{s,\alpha}$, 
\begin{gather*}
\begin{aligned}
\|\bz\|_2^2 -  \|\bA\bz\|_2^2 
 \le   
\ \frac{34\delta}{3} + \delta (1- 3\delta)\int_{\cU} |\tilde \psi(\by,\bz)|^2 d\varrho 
+ 
 (1-3\delta)\sum_{l\in\cL}  
(1+\delta)^{2l}
  \kappa_l
 .
\end{aligned}
\end{gather*}
Note that $\tilde\psi(\by,\bz) = 0,\, \forall \by\notin {I}\cup \cU'$ and $\tilde\psi(\by,\bz) \le (1+\delta)\Xi,\, \forall \by\in \cU$, combining with \eqref{mult:est15b}, we estimate 
{
\begin{align*}
& \int_{\cU} |\tilde \psi(\by,\bz)|^2 d\varrho = \int_{I} |\tilde \psi(\by,\bz)|^2 d\varrho + \int_{\cU'} |\tilde \psi(\by,\bz)|^2 d\varrho 
\\
 \le\ & (1+4\delta) \int_{I} | \psi(\by,\bz)|^2 d\varrho + \int_{\cS_\nu} (1+\delta)^2 \Xi^2 d\varrho 
 \\
 <\  &   1+ 4 \delta + (1 + \delta)^2 (1+\alpha)^2 s \int_{\cS_\nu} |\Omega(\by)|^2 d\varrho < 1 + \frac{56\delta}{11}, 
\end{align*}
}
which yields 
\begin{align*}
\|\bz\|_2^2 -  \|\bA\bz\|_2^2 < 
\frac{25\delta}{2} +  
 (1-3\delta)\sum_{l\in\cL}  
(1+\delta)^{2l}
  \kappa_l.
 \end{align*}

To obtain Theorem \ref{mult:WREP_theorem}, we 
derive 
conditions on $m$ such that
\begin{align}
\label{mult:eq9b}
m & \ge  C~(\#\cL) \frac{(1+\delta)^{2l}}{\delta^2} \left[ \log(\# F_l)  
+ \log\left(\frac{\#\cL}{\gamma}\right)\right],\ \quad\quad l\in \cL. 
\end{align}
We have in view 
of \eqref{mult:eq2b} and assumption $\delta < 1/13$ that

\begin{align}
&C\,  (\#\cL) \frac{(1+\delta)^{2l}}{\delta^2} \log(\# F_l)  \notag
\\
& \qquad \le C\delta^{-6}  \, {(1+\alpha)^2 s}  \, { \log( \Xi / \delta)} \log(2N) \max\biggl\{ \notag \left(\frac{4}{\pi}\right)^d \left[ K_d \left(\frac{2^{d-1}}{C}. \frac{ \varsigma {\log(1+\delta)}  }{ \log( \Xi / \delta)} \right) \right]^{-\frac12},  \notag
\\
&\qquad \qquad \qquad \qquad \qquad \qquad \qquad \sqrt{\frac{m}{\gamma_0}}  \left(\frac{4}{\pi}\right)^{2d}  \left(\log\left(\frac{\pi^d}{4^{d}}\cdot \sqrt{\frac{m}{\gamma_0}}\right)\right)^{2d-2}\biggl\},     \label{mult:eq10}
\\
&C\,  (\#\cL) \frac{(1+\delta)^{2l}}{\delta^2}  \log\left(\frac{\#\cL}{\gamma}\right) \le   C \frac{ \log( \Xi / \delta)}{\log(1+\delta)} \cdot  \frac{(1+\delta)^{2} \Xi^2 }{\delta^2}   \cdot   \log\left( \frac{ \log( \Xi / \delta)}{\gamma \log(1+\delta)}\right)  \notag
 \\
&\  \le C \delta^{-3} \pi^{-d}    {(1+\alpha)^2 {s}} \left [K_d \left(\dfrac{11\delta}{12 (1+\alpha)^2 s}.\dfrac{  \pi^{d} (d-1)! }{2^{4d}e(d+1)}\right) \right ]^{-1}\!  { \log( \Xi / \delta)}    \log\left( \frac{ \log( \Xi / \delta)}{\gamma \delta}\right).  \notag 
\end{align} 

To fulfill \eqref{mult:eq9b}, we set $m$ greater than the right hand sides in \eqref{mult:eq10}. First, 
\begin{align*}
& C\delta^{-6}  \, {(1+\alpha)^2 s}  \, { \log( \Xi / \delta)} \log(2N)  \left(\frac{4}{\pi}\right)^d \left[ K_d \left(\frac{2^{d-1}}{C}. \frac{ \varsigma {\log(1+\delta)}  }{ \log( \Xi / \delta)} \right) \right]^{-\frac12} \le m 
 \\
 \text{ if }\ \ \ & \frac {C}{m^2} \delta^{-12}  \, {(1+\alpha)^4 s^2}  \, { \log^2 ( \Xi / \delta)} \log^2 (2N)  \left(\frac{4}{\pi}\right)^{2d} \le K_d \left(\frac{2^{d-1}}{C}. \frac{ \varsigma {\log(1+\delta)}  }{ \log( \Xi / \delta)} \right). 
\end{align*}
Since $H_d$ is strictly increasing, the above inequality is equivalent to 
\begin{align*}
& H_d\left ( \frac {C}{m^2} \delta^{-12}  \, {(1+\alpha)^4 s^2}  \, { \log^2 ( \Xi / \delta)} \log^2 (2N)  \left(\frac{4}{\pi}\right)^{2d} \right) \le  \frac{2^{d-1}}{C}. \frac{ \varsigma {\log(1+\delta)}  }{ \log( \Xi / \delta)}. 
\end{align*}
From \eqref{mult:est16b} and Lemma \ref{lemmaM:est}, the right hand side is bounded from below by 
$$
\frac{\pi^d}{2^{3d} (d+1) } . \frac{  { \log(1+\delta)}  }{ C \log( \Xi / \delta)} .  \, \dfrac{\delta}{ (1+\alpha)^2 s} K_d \left(\dfrac{11\delta}{12 (1+\alpha)^2 s}.\dfrac{  \pi^{d} (d-1)! }{2^{4d}e(d+1)}\right), 
 $$
and we set $m$ such that 
\begin{align*}
& H_d\left( \dfrac{ 2^{3d} (d+1)} {  \pi^d }  \cdot \dfrac{ C (1+\alpha)^2 s { \log(\frac{ \Xi}{  \delta})} } {\delta^2} H_d\left ( \frac {C}{m^2}   . \frac{(1+\alpha)^4 s^2}{ \delta^{12}}  \, { \log^2 ( \frac{\Xi}{  \delta})} \log^2 (2N)  \left(\frac{4}{\pi}\right)^{2d} \right) \right)
\\
& \qquad \le \dfrac{11\delta}{12 (1+\alpha)^2 s}.\dfrac{  \pi^{d} (d-1)! }{2^{4d}e(d+1)}. 
\end{align*}   
Rearranging with notice that $H_d(\beta) \ge \beta,\forall \beta \le \frac{1}{e^{d-1}}$, it is sufficient that    
\begin{align*}
   m^2 \,  \ge  & \,  \frac{C { (d+1)^2} }{(d-1)!} \cdot  \frac{2^{11d}}{\pi^{4d}}  \cdot \frac{(1+\alpha)^8 s^4}{ \delta^{15}}  \, { \log^3 ( \frac{\Xi}{  \delta})} \log^2 (2N)  
    \\
    &\times    \left(\log \left( \frac {m^2}{C}   \, . \, \frac{ \delta^{12}}{(1+\alpha)^4 s^2}  \, . \, {\frac{1}{ \log^2 ( \frac{\Xi}{  \delta}) \log^2 (2N) }}  \left(\frac{\pi} {4}\right)^{2d} \right) \right)^{d-1}
    \\
    &\times \left(\log \left( \frac {m^2} {C { (d+1)} }  \, . \, \frac{ \delta^{14}} {(1+\alpha)^6 s^3}  \, . \,  \frac{1}{ \log^3 ( \frac{\Xi}{  \delta}) \log^2 (2N) }\, . \,  \frac{\pi^{3d}}{2^{7d}}   \right) \right)^{d-1}. 
\end{align*}
This condition can be replaced by the following shortened (yet slightly more restrictive) condition 
\begin{align*}
  m \,  & \ge   \,  \frac{C { (d+1)} }{{(d-1)!)^{\frac12}}} \cdot  \left( \frac{64\sqrt{2}}{\pi^2}\right)^d \cdot \frac{(1+\alpha)^4 s^2}{ \delta^{{15}/{2}}}  \, { \log^{3/2} ( \frac{\Xi}{  \delta})} \log (2N) 
 \left(\log \left( \! C{m}   \left(\! \frac{\pi}{4}\right)^{{d}}   \right) \right)^{{d-1}}. 
\end{align*}

Secondly, we need 
\begin{align*}
m \ge C  \, \frac{(1+\alpha)^2 s}{\delta^{6}}  \, { \log( \Xi / \delta)} \log(2N) \sqrt{\frac{m}{\gamma_0}}  \left(\frac{4}{\pi}\right)^{2d}  \left(\log\left(\frac{\pi^d}{4^{d}}\cdot \sqrt{\frac{m}{\gamma_0}}\right)\right)^{2d-2},  
\end{align*}
which is guaranteed given that 
\begin{align*}
m \ge \, \frac{C}{\gamma_0} \left(\frac{4}{\pi}\right)^{4d}   \frac{(1+\alpha)^4 s^2}{\delta^{12}}  \, { \log^2 ( \Xi / \delta)} \log^2(2N)    \left(\log\left(\frac{\pi^d}{4^{d}}\cdot \sqrt{\frac{m}{\gamma_0}}\right)\right)^{4d-4}. 
\end{align*}

The last condition is 
\begin{align*}
 m  \ge \,  C &  \delta^{-3} \pi^{-d}     {(1+\alpha)^2 {s}} \left [K_d \left(\!\dfrac{11\delta}{12 (1+\alpha)^2 s}.\dfrac{  \pi^{d} (d-1)! }{2^{4d}e(d+1)}\right) \right ]^{-1} \!\! { \log( \Xi / \delta)}    \log\left(\! \frac{ \log( \Xi / \delta)}{\gamma \delta}\!\right), 
\\
\text{ i.e., }\ & \dfrac{11\delta}{12 (1+\alpha)^2 s}.\dfrac{  \pi^{d} (d-1)! }{2^{4d}e(d+1)}  \ge  H_d\left(\frac{C}{m} . \frac{   {(1+\alpha)^2 {s}} }{ \delta^{3} \pi^{d}}  { \log( \Xi / \delta)}    \log\left( \frac{ \log( \Xi / \delta)}{\gamma \delta}\right) \right). 
\end{align*}
Rearranging and simplifying, we have 
\begin{align*}
m   \ge \dfrac{C (d+1)}  {  (d-1)! } \cdot \left( \frac{4 }{\pi }\right)^{2d} \cdot \frac{   {(1+\alpha)^4 {s}^2} }{ \delta^{4} }  { \log( \Xi / \delta)}    \log\left( \frac{ \log( \Xi / \delta)}{\gamma \delta}\right) \left(  \log\left(C{\pi^{d} m}     \right) \right)^{d-1} . 
\end{align*}

Finally, we observe from the definition of $\Xi$ and Lemma \ref{lemmaM:supp} that
\begin{gather}
\label{mult:last_est}
\begin{aligned}
\log( \Xi / \delta) & = \frac12 \log\( \dfrac{(1+\alpha)^2 {s}}{\delta^2 \pi^{d} {e} }\) - \frac12\log\(  K_d\left(\dfrac{11\delta}{12 (1+\alpha)^2 s}.\dfrac{  \pi^{d} (d-1)! }{2^{4d}e(d+1)}\right) \)
\\
& \le  \frac12 \log\( \dfrac{(1+\alpha)^2 {s}}{\delta^2 \pi^{d} {e} }\) +  \log\(\frac{C  (d+1)(d-1)^{d-1}  }{  (d-1)!    }\cdot \dfrac{2^{4d}}{\pi^{d} }\cdot \dfrac{(1+\alpha)^2 s}{\delta}  \)
\\
& \le  \log\(\frac{C  (d+1)(d-1)^{d-1}  }{  (d-1)!    }\cdot \dfrac{2^{4d}}{\pi^{{3d}/{2}} }  \cdot \dfrac{(1+\alpha)^3 s^{\frac32}}{\delta^{2}}  \).
\end{aligned}
\end{gather}
Subsuming \eqref{mult:last_est} into the three above conditions on $m$, we deduce that \eqref{mult:complexity} guarantees 
$$
\|\bA \bz\|_2^2 > (1 - 13\delta)\|\bz\|_{2}^2, \ \ \forall \bm{z}\in \cE_{s,\alpha},
$$
with probability exceeding $1-(\gamma + \gamma_0)$. Rescaling $\delta$ concludes the proof. 
$\square$

\section{Numerical illustrations}
\label{sec:num_examples}
In this section, we present some numerical experiments to illustrate an important observation from our theory, that is, restricting the sparse recovery on random Legendre matrices associated with preferable sets (in particular, sample sets with small test values) can improve the reconstruction performance. 

These tests are conducted for $1d$ Legendre approximations, where we set $\cJ := \{1,\ldots, 360\}$. For each experiment, we generate a large number of sets of samples (drawn independently from the uniform distribution in $[-1,1]$) and rank them according to their test values $ \sum_{i=1}^m Z_i$, where $\gamma_0$ is set to $0.8$. We divide the sample sets into five equal-sized groups: the first group contains $20\%$ of the sets with lowest test values (roughly, preferable sets according to 0.8), the second group contains next $20\%$ of the sets with lowest test values and not being in the first group, and so on. The fifth group includes $20\%$ of the sets with highest test values. 

In the first experiment, we generate $1000$ sample sets, each of which has 180 samples; hence, random matrices have fixed size $180\times 360$. For each matrix, we form a sparse coefficient vector $\bc$ by randomly selecting the support of $\bc$ from $\cJ$ and drawing its coefficients from a Gaussian distribution, and aim to reconstruct $\bc$ from noiseless observations $\bg = \bA\bc$. The average errors and successful rates associated with five groups of sample sets are plotted in Figure \ref{fig:num1} for two different sparsity levels of $\bc$ ($30$ and $40$). We observe that on average, reconstruction on sample sets with smaller test values results in reduced recovery errors and better successful rates, thus confirming our theory.   

\begin{center}
\begin{figure}[h]
\centering
\includegraphics[height=4.6cm]{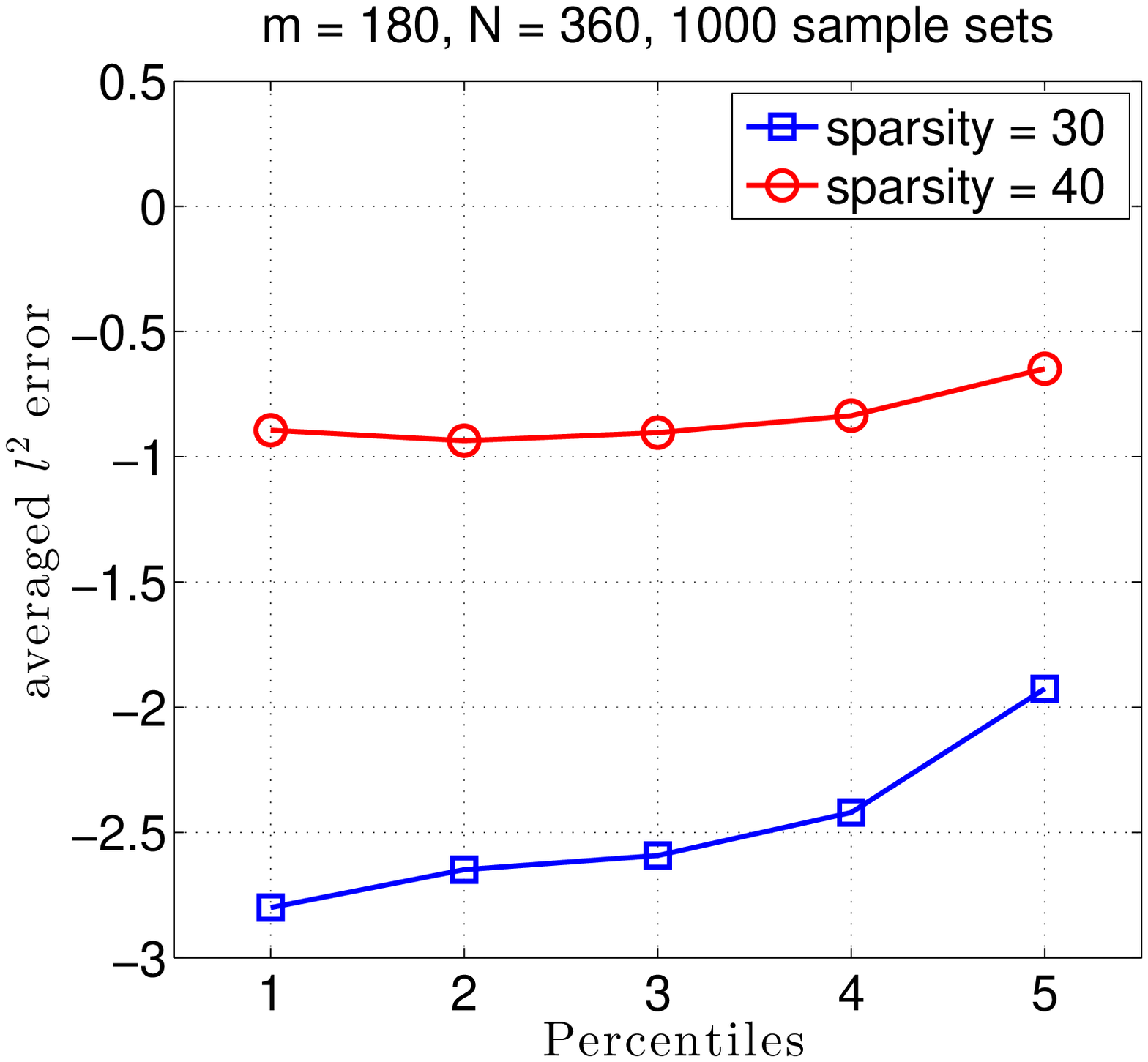} 
\includegraphics[height=4.6cm]{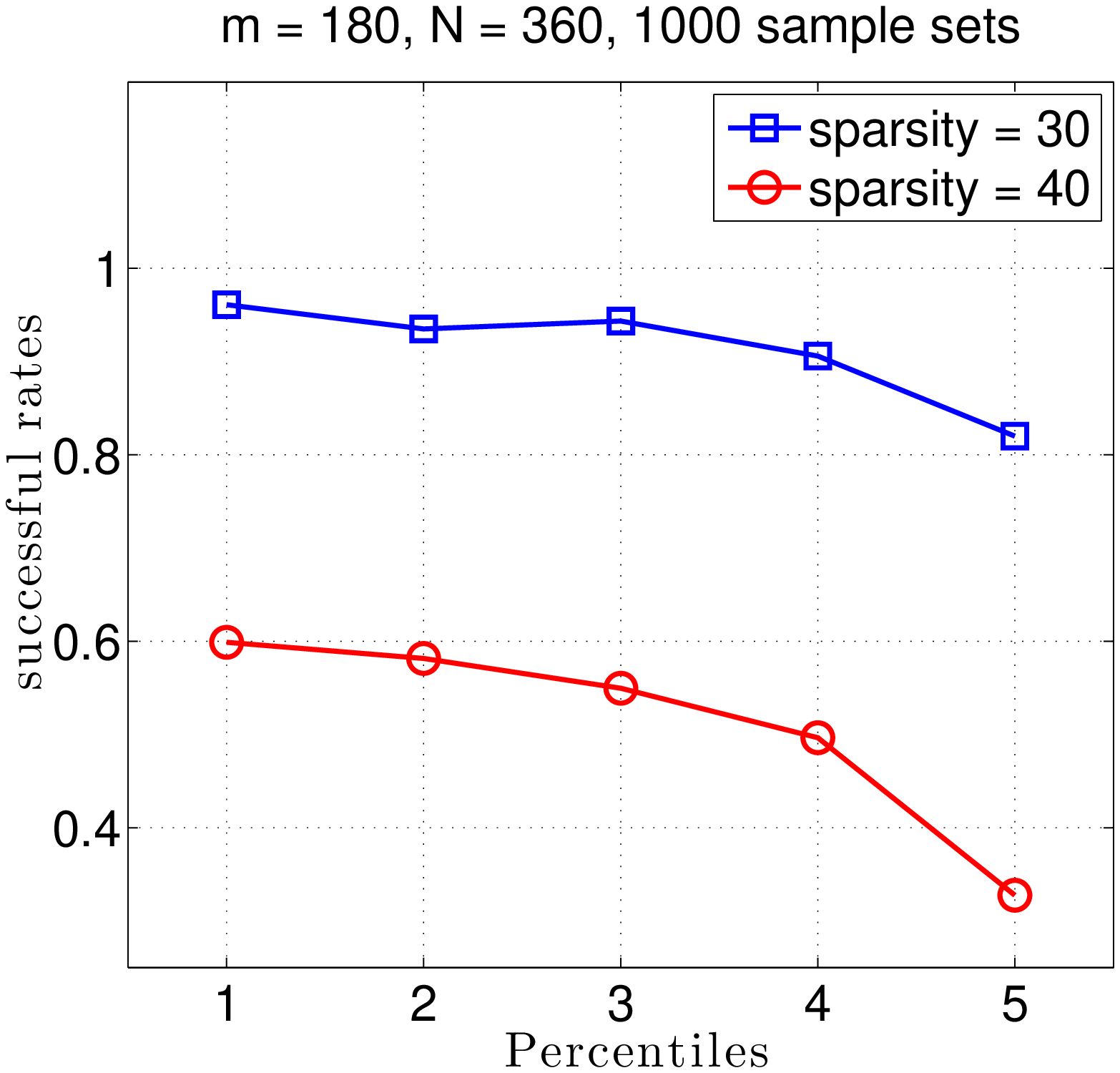} 
\caption{Averaged recovery errors and successful rates associated with five different groups of sample sets with increasing test values in the reconstruction of $30$- and $40$-sparse coefficient vectors using random Legendre matrices. }
\label{fig:num1}
\end{figure}
\end{center}

In the second experiment, we generate $1500$ sample sets and again seek to reconstruct sparse vector $\bc$ from noiseless observations $\bg = \bA\bc$. The sparsity of $\bc$ is fixed at $14$, and the number of samples varies from $10$ to $180$. In Figure \ref{fig:num2}, we show the average errors and successful rates associated with $20\%$ of sample sets with lowest test values (roughly, preferable sets according to $0.8$) and $20\%$ of sets with highest test values. For comparison, we also plot the average errors and successful rates associated with all $1500$ sample sets, as well as those from preconditioned Legendre matrices with Chebyshev sampling, \cite{RauWard12,HD15,JakemanNarayanZhou16}. This technique is widely-accepted to be the optimal sampling strategy to reconstruct coefficient vectors from underdetermined Legendre matrices. To produce the results for preconditioning technique, $1500$ sets of samples drawn from Chebyshev distribution in $[-1,1]$ are generated separately. Figure \ref{fig:num2} reveals that preferable sets according to $0.8$ are superior to general uniform sample sets and sets with high test values in sparse recovery. Although not outperforming Chebyshev sampling, preferable sets provide a simple selective strategy to improve the recovery property with uniform sampling.

\begin{center}
\begin{figure}[h]
\centering
\includegraphics[height=4.5cm]{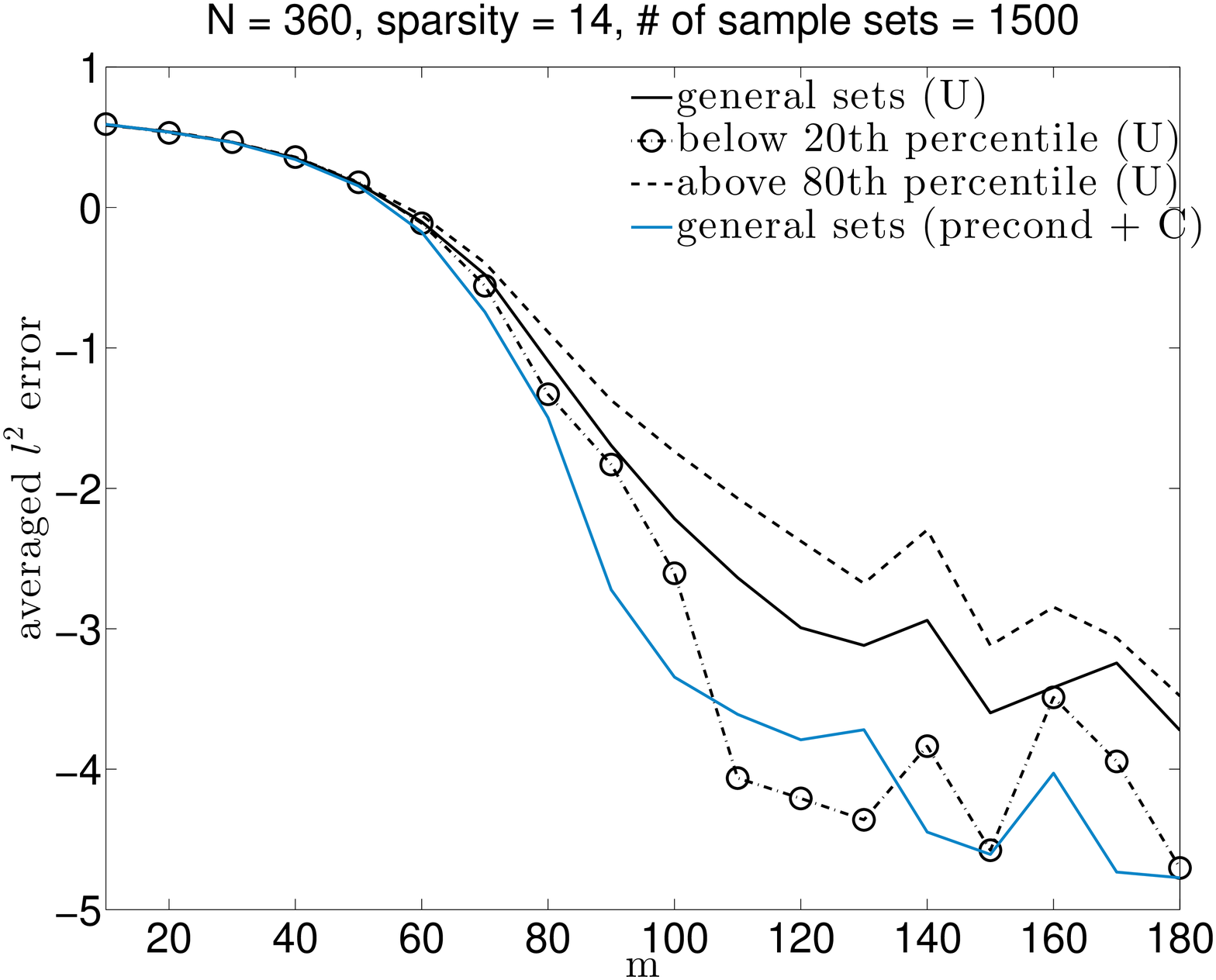} 
\includegraphics[height=4.5cm]{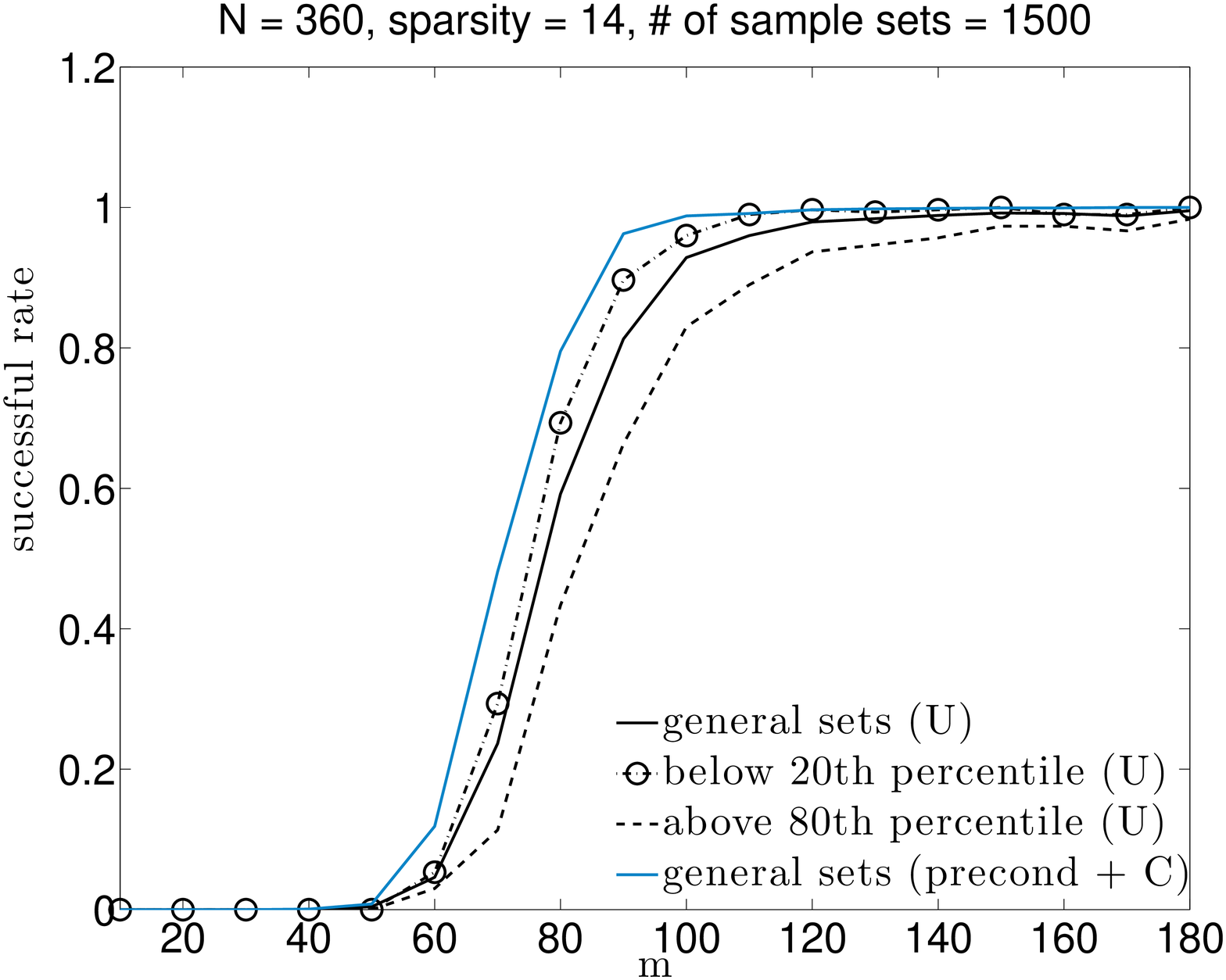} 
\caption{Averaged recovery errors and successful rates associated with $20\%$ of sample sets with lowest test values and $20\%$ of sample sets with highest test values in the reconstruction of $14$-sparse coefficient vectors using random Legendre matrices. The recovery performances over all uniform sample sets \textit{(solid black)}, as well as using preconditioned Legendre matrices with Chebyshev sampling \textit{(solid blue)} are shown for comparison.}
\label{fig:num2}
\end{figure}
\end{center}

The third test is similar to our second one; however, we fix the number of samples at $180$ and plot the average errors and successful rates with increasing sparsity. Again, sample sets with low test values show much better reconstruction, compared to general uniform sample sets and sets with high test values (Figure \ref{fig:num3}).  

\begin{center}
\begin{figure}[h]
\centering
\includegraphics[height=4.5cm]{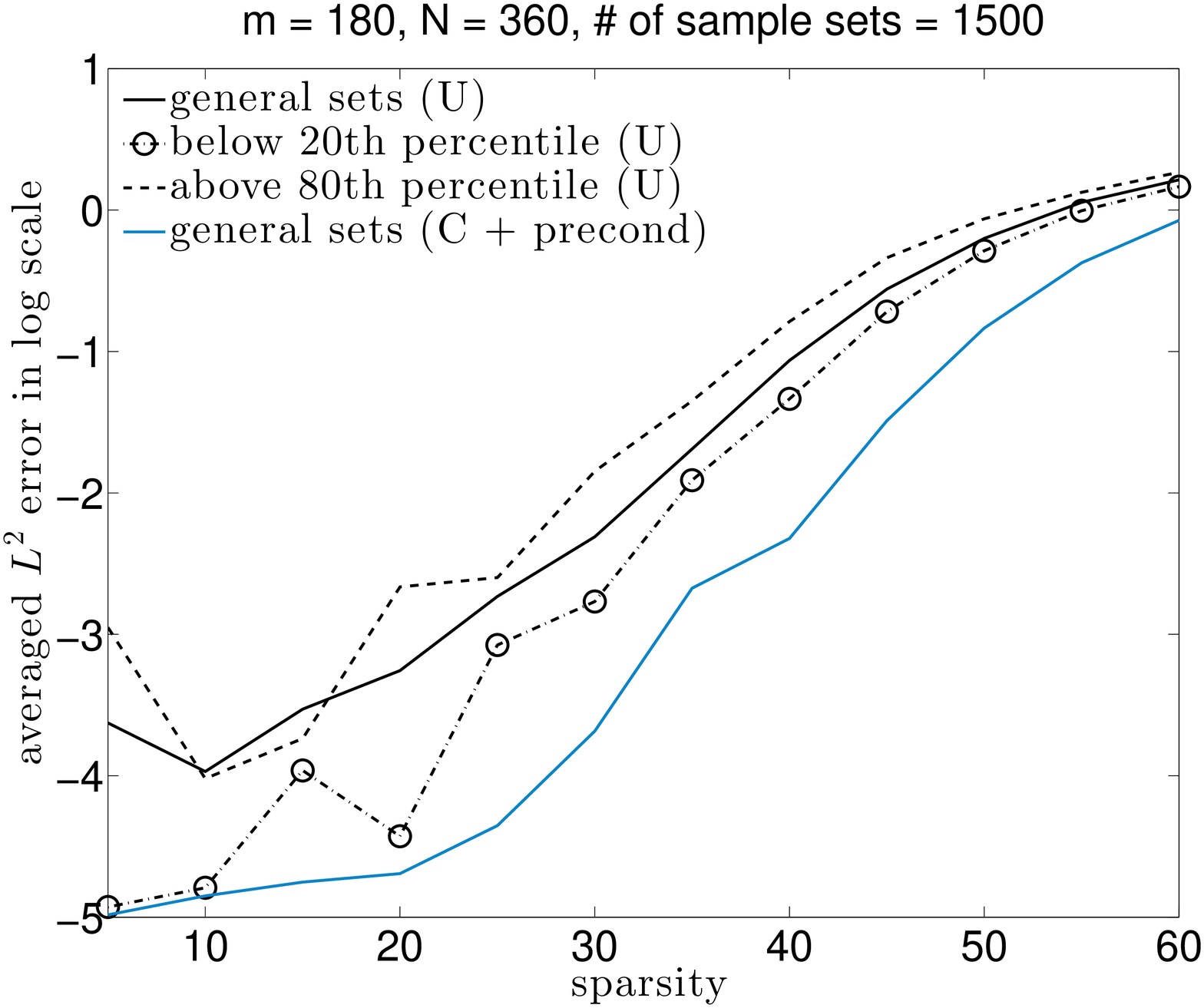}
\includegraphics[height=4.5cm]{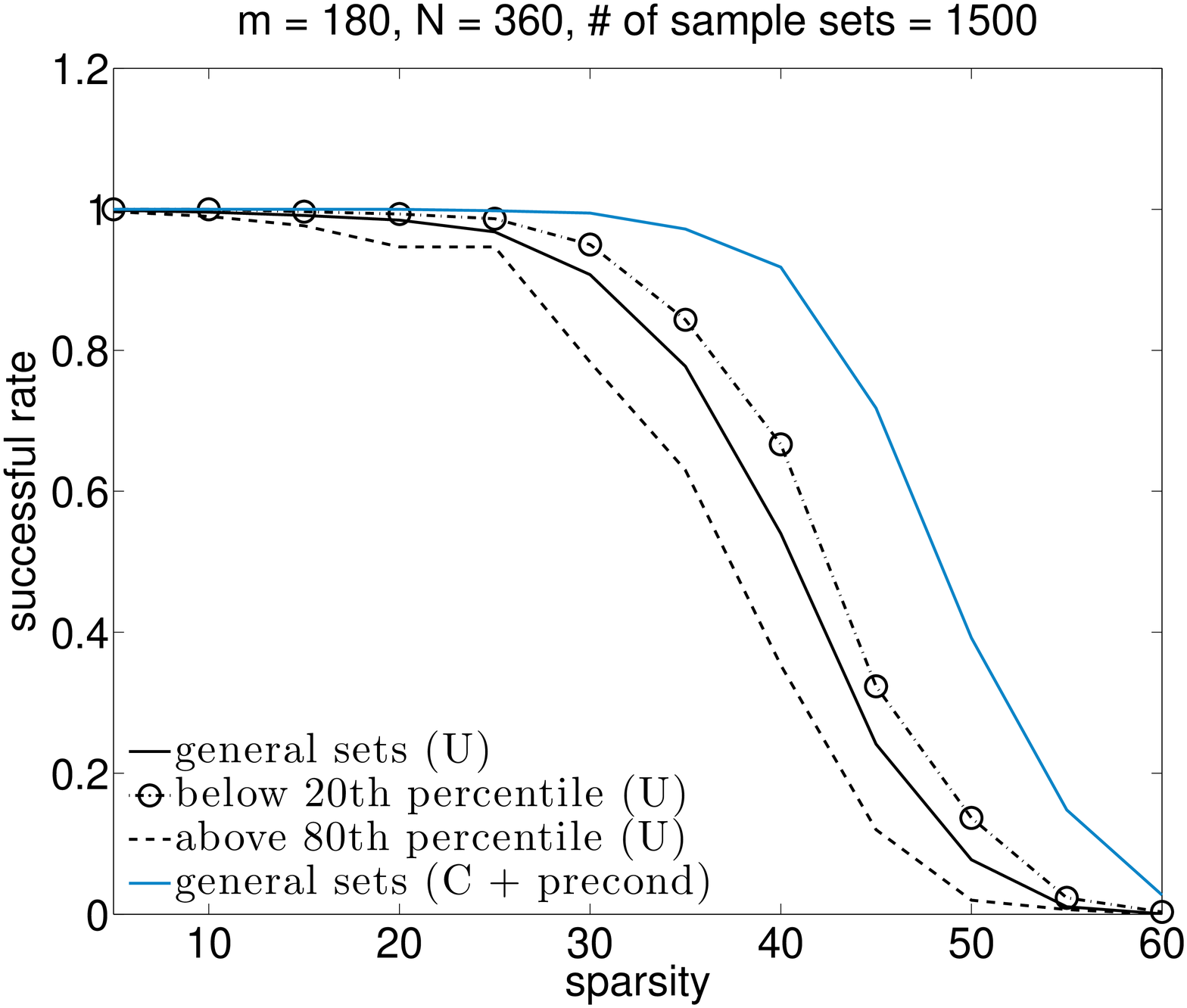} 
\caption{Averaged recovery errors and successful rates associated with $20\%$ of sample sets with lowest test values and $20\%$ of sample sets with highest test values in the reconstruction of sparse coefficient vectors using random Legendre matrices of size $180\times 360$. The recovery performances over all uniform sample sets \textit{(solid black)}, as well as using preconditioned Legendre matrices with Chebyshev sampling \textit{(solid blue)} are shown for comparison.}
\label{fig:num3}
\end{figure}
\end{center}

\section{Concluding remarks}
\label{sec:conclusion}
{
This paper provides a theoretical justification for the sparse reconstruction from underdetermined Legendre systems with uniform samples via $\ell_1$ minimization. Our analysis employs the envelop bound (rather than the prohibitive uniform bounds) of all Legendre polynomials, and by extending recent chaining arguments [4, 8] to deal with this bound, allows us to establish a new uniform recovery guarantee for sparse, multi-dimensional Legendre expansions, which is independent of the polynomial subspaces. To the best of our knowledge, this is the first time recovery condition is established for orthonormal systems without assuming the uniform boundedness of the sampling matrix. Extending the present results to related scenarios, such as non-uniform recovery and discrete least squares, to relax the dependence of sample complexity on Legendre uniform bound would be the next logical step. Also, we believe that the analysis approach herein should be of interest on its own, and can be applied elsewhere, to other random systems which can be bounded precisely pointwise, but whose uniform bound is bad.
}
\appendix
\label{sec:appendix}

\bibliographystyle{spmpsci}      
\bibliography{database3}   
\end{document}